\documentclass[12pt]{article}
\usepackage[utf8]{inputenc}
\usepackage{amsmath,amssymb,amsthm}
\renewcommand\le{\leqslant}
\renewcommand\ge{\geqslant}


\newcommand\T{\mathbb{T}}
\newcommand\R{\mathbb{R}}
\newcommand\E{\mathsf{E}}
\newcommand\eps{\varepsilon}
\renewcommand\P{\mathsf{P}}

\newcommand\LH{L_{\mathrm{H}}}
\newcommand\avg{\mathrm{avg}}

\DeclareMathOperator\Rig{Rig}
\DeclareMathOperator\rank{rank}
\DeclareMathOperator\vc{vc}
\DeclareMathOperator\tr{tr}
\DeclareMathOperator\Vol{Vol}
\DeclareMathOperator\Law{Law}
\DeclareMathOperator\supp{supp}
\DeclareMathOperator\sign{sign}

\newtheorem{exttheorem}{Theorem}

\newtheorem*{theorem*}{Theorem}
\newtheorem{statement}{Statement}[section]
\newtheorem{theorem}{Theorem}[section]
\newtheorem{lemma}{Lemma}[section]
\newtheorem{corollary}{Corollary}[section]
\newtheorem*{example}{Example}
\newtheorem*{remark}{Remark}
\newtheorem{question}{Question}

\title{Widths and rigidity}
\author{Yuri Malykhin}

\begin{document}
\maketitle

\begin{abstract}
    We consider Kolmogorov widths of finite sets of functions.
    Any orthonormal system of $N$ functions is \textit{rigid} in $L_2$, i.e.
    it cannot be well approximated by linear subspaces of
    dimension essentially smaller than $N$.
    This is not true for weaker metrics: it is known that in every $L_p$, $p<2$,
    the first $N$ Walsh functions can be $o(1)$-approximated by a linear space of
    dimension $o(N)$.

    We give some sufficient conditions for rigidity.
    We prove that independence of functions (in the probabilistic meaning) implies rigidity
    in $L_1$ and even in $L_0$~--- the metric that corresponds to convergence in
    measure. In the case of $L_p$, $1<p<2$, the condition is weaker:
    any $S_{p'}$-system is $L_p$-rigid.

    Also we obtain some positive results, e.g. that first $N$ trigonometric
    functions can be approximated by very-low-dimensional spaces in $L_0$,
    and by subspaces generated by $o(N)$ harmonics in $L_p$, $p<1$.
\end{abstract}

\section{Introduction}

In this paper we consider Kolmogorov widths of finite sets of functions.
Recall that the Kolmogorov width
$d_n(K,X)$ is defined as the minimum distance $\sup\limits_{x\in K}\rho(x,Q_n)_X$ from
the set $K$ to $n$-dimensional subspaces $Q_n$ of the space $X$. This classical
notion goes back to the paper~\cite{Kol36} by A.N.~Kolmogorov (1936). Widths are
fundamental approximation characteristic of sets 
and the behaviour of the sequence $(d_n(K,X))_{n=1}^\infty$ for functional
classes and other sets of interest was actively studied by many authors.
The books~\cite[Ch.13, 14]{LGM96} and~\cite{P85} are standard references
on the subject. See also a recent survey~\cite{DTU18}, \S4.3.

We will informally say that a set is \textit{rigid}, if it cannot be well
approximated by low-dimensional subspaces.
The starting point for us is the well
known result that any orthonormal system of functions
$\varphi_1,\ldots,\varphi_N$ is rigid in $L_2(0,1)$
(of course, this holds true for any Euclidean space):
\begin{equation}
    \label{l2width}
d_n(\{\varphi_1,\ldots,\varphi_N\},L_2) = \sqrt{1-n/N}.
\end{equation}
(This equality for widths was first used by S.B.~Stechkin, see~\cite{Tikh87} for
details.)

The situation changes if we consider weaker metrics.
\begin{exttheorem}[\cite{Mal22}]
    \label{thm_walsh_p12}
    Let $w_1,w_2,\ldots$ be the Walsh system in the Paley numeration. For any
    $p\in[1,2)$ there exists $\delta=\delta(p)>0$ such that for sufficiently
    large $N$ the inequality holds
    $$
    d_n(\{w_1,\ldots,w_N\},L_p[0,1])\le N^{-\delta},\quad
    \mbox{if $n\ge N^{1-\delta}$}.
    $$
\end{exttheorem}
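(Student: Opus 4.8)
The plan is to reduce the statement to a finite-dimensional assertion about the Walsh--Hadamard matrix and then to build the approximating subspace from the product structure of that matrix.

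\textbf{Reduction to finite dimensions.} Fix $m$ with $N=2^m$ and let $P_m$ be the conditional expectation onto the $\sigma$-algebra of dyadic intervals of rank $m$. Since $P_m$ is a contraction on every $L_p[0,1]$ and $P_m w_k=w_k$ for $k<2^m$, one may assume the competing subspace $Q_n$ consists of functions constant on those intervals; identifying such functions with vectors of $\R^N$ carrying the normalized norm $\|x\|_p=(N^{-1}\sum_i|x_i|^p)^{1/p}$, the functions $w_1,\dots,w_N$ become the rows of the $N\times N$ Walsh--Hadamard matrix $H$ (a $\pm1$ matrix with $HH^{\mathsf T}=NI$). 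So the theorem is equivalent to: there is a matrix $M$ with $\rank M\le N^{1-\delta}$ such that each row of $H-M$ has normalized $\ell_p$-norm at most $N^{-\delta}$ --- a row-wise, magnitude-controlled \emph{non-rigidity} statement for $H$. Here $p<2$ is essential: \eqref{l2width} rules this out for $p=2$, and since $\|\cdot\|_p\le\|\cdot\|_q$ for $p\le q$ on a probability space it suffices to treat $p$ close to $2$.

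\textbf{The construction.} Split $\mathbb F_2^m$ into $t$ disjoint blocks $B_1,\dots,B_t$ of size $s=m/t$, so that
$$
H[x,y]=\prod_{i=1}^t(-1)^{\langle x|_{B_i},\,y|_{B_i}\rangle}=(-1)^{W(x,y)},\qquad
W(x,y)=\#\{\,i:\langle x|_{B_i},y|_{B_i}\rangle\text{ odd}\,\}.
$$
For fixed $x$ and uniform $y$ the block-parities are independent fair bits over the blocks meeting $\supp x$, so $W(x,\cdot)$ is binomially concentrated; I would replace $(-1)^W$ by a symmetric polynomial of degree $\le d$ in the block-parities (one such polynomial per band of values of $\#\{i:x|_{B_i}\neq0\}$), accepting an error on the columns where $W$ is atypical. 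Each monomial of such a polynomial factors over $\le d$ blocks, hence has matrix rank $\le 2^{sd}$; summing over the $O(d\binom td)$ monomials and the few bands (the $x$-dependent coefficients enter only as diagonal left multiplications and do not raise the rank), the resulting $M$ satisfies $\rank M\le(\text{poly})\cdot\binom td 2^{sd}$, which one drives below $N^{1-\delta}$ by a suitable choice of $s$ and $d$.

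\textbf{The main obstacle.} The crux is to make the per-row error as small as $N^{-\delta}$ while keeping $\rank M$ genuinely polynomially below $N$. A uniform low-degree approximation of $(-1)^W$ is impossible --- this is exactly the $L_2$-rigidity --- so an exceptional column set is unavoidable; what $p<2$ buys is that this set only needs measure $\le N^{-\delta p}$ and the approximant only needs to be polynomially bounded there, so that its $L_p$-contribution stays $\le N^{-\delta}$. Balancing (i) atypical fraction $\le N^{-\delta p}$, which pushes $d$ up, against (ii) $\rank M\le N^{1-\delta}$, which pushes $d$ down, is delicate: the naive Lagrange-interpolation choice of the symmetric approximant has an exponential Lebesgue constant and overshoots far off the typical range of $W$, so one needs a smoothed (or partially randomized) choice of the polynomial together with a careful optimization of $s,t,d$ in terms of $p$. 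This is a strengthening of the known non-rigidity of Hadamard matrices --- where either the rank gain or the per-row error is only constant --- and it forces the admissible $\delta=\delta(p)$ to degenerate as $p\uparrow2$; essentially all the work lies in this step.
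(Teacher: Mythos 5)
This theorem is quoted from~\cite{Mal22}; the present paper does not reprove it, so the only internal point of comparison is the closely related argument in Section~\ref{sec_positive} (interpolation of $(-1)^t$ on the concentration band of $\langle x,y\rangle$, rank bounded by counting monomials), which yields only the $\LH$/$L_0$ statements. Your reduction to the $N\times N$ Walsh--Hadamard matrix via the conditional expectation $P_m$ is fine, and your construction skeleton --- block parities, a low-degree symmetric polynomial replacing $(-1)^{W}$ on the typical columns, rank bounded by $\binom{t}{\le d}2^{sd}$ --- is exactly the Alman--Williams scheme underlying~\cite{Mal22}. But as a proof it has a genuine gap, and you say so yourself: the entire content of the $L_p$ theorem, as opposed to the Hamming-metric non-rigidity, is the control of the \emph{magnitude} of the low-rank approximant on the atypical columns together with the optimization of $s,t,d$, and this step is not carried out.

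Moreover, the one quantitative claim you do make about this step is wrong as stated: if the exceptional set has measure $\mu$ and the approximant is bounded by $M$ there, the per-row error is of order $\mu^{1/p}M$, so a bad set of measure $N^{-\delta p}$ forces $M=O(1)$, not merely ``polynomially bounded''; conversely a polynomially large $M$ requires a correspondingly much smaller bad set. What the actual proof needs is a shell-by-shell comparison: at distance $s$ beyond the interpolation range the Lagrange-type interpolant grows roughly like $(es/d)^{d}$, while the binomial tail gives column mass about $\exp(-cs^{2}/k)$, and one must show that $\sum_s \exp(-cs^{2}/k)\,(es/d)^{pd}$ can be made $\le N^{-\delta p}$ simultaneously with $\binom{t}{\le d}2^{sd}\le N^{1-\delta}$; it is precisely this balance that closes only for $p<2$, consistently with~\eqref{l2width}. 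Without that estimate (or a replacement such as a Chebyshev-regularized interpolant with explicit off-band bounds), your argument establishes only the $\LH$-type approximation already proved in Section~\ref{sec_positive}, not the claimed $L_p$ bound $N^{-\delta}$ with dimension $N^{1-\delta}$.
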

This theorem appeared as a corollary of methods introduced in Complexity theory
for the related problem of low-rank approximation, namely, \textit{matrix
rigidity}. The rigidity function of a matrix $A$ is defined as the Hamming distance from $A$ to
matrices of given rank:
$$
\Rig(A,n) := \min_{\rank B\le n} \#\{(i,j)\colon A_{i,j}\ne B_{i,j}\}.
$$
Matrix rigidity was introduced by L.Valiant in 1977 as a way to
obtain lower bounds for the linear circuit complexity (e.g. the inequality
$\Rig(A_N,\eps N)\ge N^{1+\eps}$ gives superlinear lower bounds for the circuit
complexity), see the survey~\cite{L08} for more details.

It was a surprising result by Alman and Williams~\cite{AW17} that the family of
Walsh--Hadamard matrices is not rigid, i.e., the rank of that matrices may be
significantly decreased by changing a small amount of elements.
Theorem~\ref{thm_walsh_p12} is based on their construction.

In this paper we get sufficient conditions for systems to be rigid in 
metrics weaker than $L_2$.

\paragraph{Average widths and $L_1$-rigidity.}
Our lower bounds on widths are based on the approximation properties of random vectors in $\R^N$. 
    Let $\xi_1,\ldots,\xi_N$ be independent random variables with $\E\xi_i=0$
    and $\E|\xi_i| = 1$. We prove that for any $n$-dimensional space
    $Q_n\subset\R^N$,
    $$
    \E\rho(\xi,Q_n)_{\ell_1^N} \ge c(\eps)N, \quad\mbox{if $n\le N(1-\eps)$}.
    $$
Let us introduce the notation
$$
d_n^\avg(\xi,X) := \inf_{\dim Q_n\le n}\E\rho(\xi,Q_n)_{X}
$$
and call this quantity the ``average Kolmogorov width'' of a random
vector $\xi$. If fact, average widths were first considered by 
S.M.~Voronin and N.T.~Temirgaliev in~\cite{VT84}. This notion was further
studied by V.~Maiorov~\cite{M94}, Creutzig~\cite{Cr02} and others. We discuss
the notation $d_n^\avg$ and the properties of the average widths in
Section~\ref{subsec_avg_widths}.  There is a simple equality
$$
d_n^\avg(\xi,\ell_1^N) = Nd_n^\avg(\{\xi_1,\ldots,\xi_N\},L_1(\Omega))
$$
that links the rigidity of finite-dimensional vectors and the rigidity of finite
function systems. Note that the averaging on the right side is over finite set
of elements of $L_1$, i.e. we minimize
$\frac1N\sum_{k=1}^N\rho(\xi_k,Q_n)$ over subspaces of $L_1$.

So, we can say that \textit{independence implies rigidity}:
\begin{theorem}
    \label{th_l1_intro}
    Let $\xi_1,\ldots,\xi_N$ be independent random variables with $\E\xi_i=0$
    and $\E|\xi_i| = 1$. Then
    $$
    d_n^\avg(\{\xi_1,\ldots,\xi_N\},L_1(\Omega)) = N^{-1}d_n^\avg(\xi,\ell_1^N)
    \ge c(\eps)>0, \quad\mbox{if $n\le N(1-\eps)$}.
    $$
\end{theorem}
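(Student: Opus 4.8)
The plan is to prove the equivalent random–vector inequality announced in the introduction: for every subspace $Q_n\subset\R^N$ with $n\le(1-\eps)N$,
$$\E_\xi\,\rho(\xi,Q_n)_{\ell_1^N}\ \ge\ c(\eps)N,$$
after which the identity $d_n^\avg(\{\xi_i\},L_1)=N^{-1}d_n^\avg(\xi,\ell_1^N)$ yields the theorem. The first move is Hahn--Banach duality: since $(\ell_1^N)^*=\ell_\infty^N$,
$$\rho(x,Q_n)_{\ell_1^N}=\max\{\langle v,x\rangle:\ v\in Q_n^\perp,\ \|v\|_\infty\le1\}.$$
So, writing $V:=Q_n^\perp$ (a subspace of dimension $m=N-n\ge\eps N$) and $P$ for the orthogonal projection of $\R^N$ onto $V$, it suffices to produce, for each realization of $\xi$, a vector $v=v(\xi)\in V\cap B_\infty^N$ with $\E\langle v,\xi\rangle\ge c(\eps)N$.

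The candidate is built from the signs of $\xi$. Set $\zeta:=\sign(\xi)-\E\sign(\xi)$ — independent coordinates, $\E\zeta_i=0$, $|\zeta_i|\le2$ — and $v_0:=P\zeta$. Using independence, $\E\xi_i=0$ and $\E[\sign(\xi_i)\xi_i]=\E|\xi_i|=1$ one gets $\E[\zeta_i\xi_j]=\delta_{ij}$, hence $\E\langle v_0,\xi\rangle=\E\langle\zeta,P\xi\rangle=\tr P=\dim V\ge\eps N$. Taking $v:=v_0/\max(1,\|v_0\|_\infty)\in V\cap B_\infty^N$ thus gives $\E\rho(\xi,Q_n)_{\ell_1^N}\ge\E\bigl[\langle v_0,\xi\rangle/\max(1,\|v_0\|_\infty)\bigr]$; since $\|v_0\|_\infty$ depends only on the signs, conditioning on them lets us pull it out, leaving $\E[\langle v_0,\xi\rangle\mid\text{signs}]=\langle v_0,\mu\rangle$ with $\mu_i:=\E[\xi_i\mid\sign\xi_i]$. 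In the symmetric case $\mu=\sign\xi=\zeta$, so $\langle v_0,\mu\rangle=\|v_0\|_2^2$, a quadratic form in Rademacher variables with mean $\tr P=m$ and variance $\le m$, hence $\ge m/2$ with high probability; and $(v_0)_i=\langle Pe_i,\zeta\rangle$ is a sum of bounded independent mean–zero terms with weights of $\ell_2$–norm $\|Pe_i\|_2=\sqrt{P_{ii}}\le1$, so Hoeffding plus a union bound give $\|v_0\|_\infty\le C\sqrt{\log N}$ with high probability. Combining these, one obtains $\E\rho(\xi,Q_n)_{\ell_1^N}\gtrsim_\eps N/\sqrt{\log N}$, which is the heart of the bound.

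The main obstacle is to upgrade this to the clean inequality with $c(\eps)$ free of $N$, and it has two facets, both of a truncation nature. First, the spurious $\sqrt{\log N}$: it appears only because $v_0$ may exceed the cube on a small random set $J$ of coordinates (small because $\sum_iP_{ii}=m$ forces most $P_{ii}$, hence most probabilities $\P(|(v_0)_i|>1)$, to be tiny); there $v_0$ must be truncated, and the delicate point is to replace it by a genuinely $\ell_\infty$–bounded vector of $V$ — e.g.\ by re-projecting the clipped vector, or by peeling off $J$ via $\rho(\xi,Q_n)_{\ell_1^N}\ge\rho(\xi_{J^c},P_{J^c}Q_n)_{\ell_1^{J^c}}+\rho(\xi_J,P_JQ_n)_{\ell_1^J}$ and treating the small block $J$ separately — while keeping the inner product with $\xi$ of order $N$. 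Second, for non–symmetric $\xi$ the vector $\mu$ need not be Rademacher and $\langle v_0,\mu\rangle$ need no longer concentrate, because coordinates where $\sign\xi_i$ is strongly biased force a $\mu_i$ of large size; these ``heavy'' coordinates must again be isolated and handled directly (or removed by passing to $\xi-\xi'$) before concentration is applied. In both facets the error of the truncation has to be controlled with only the first moments $\E|\xi_i|=1$ at hand — which is precisely why a crude Cauchy--Schwarz bound on the discarded part is unavailable — and I expect this to be the technically hardest part. Finally, all of the above is for $N\ge N_0(\eps)$; for the remaining finitely many $N$ one invokes the trivial hyperplane estimate $\E\rho(\xi,Q_n)_{\ell_1^N}\ge\max_{0\ne a\in Q_n^\perp}\E|\langle a,\xi\rangle|/\|a\|_\infty\ge1$ (valid whenever $n<N$, by conditioning on the coordinate at which $\|a\|_\infty$ is attained and using $\E\xi_j=0$) together with a small enough choice of $c(\eps)$.
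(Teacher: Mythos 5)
Your set-up (duality, the identity $\E\langle P\zeta,\xi\rangle=\tr P\ge\eps N$, and the reduction of the theorem to the random-vector bound via the averaged-width identity) is sound, but the proof is not complete, and the part you defer is exactly the crux. Your candidate dual vector $v_0=P\zeta$ does not lie in $B_\infty^N$, and after dividing by $\max(1,\|v_0\|_\infty)$ you only reach $c(\eps)N/\sqrt{\log N}$, and only for symmetric $\xi$: in general $\mu_i=\E(\xi_i\mid\sign\xi_i)=\zeta_i/\bigl(4p_i(1-p_i)\bigr)$ with $p_i=\P(\xi_i>0)$, which can be arbitrarily large, so $\langle v_0,\mu\rangle$ is a heavy-tailed quadratic form for which first moments give no usable concentration. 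Neither of your repair routes is carried out: clipping $v_0$ to the cube leaves $Q_n^\perp$, re-projecting the clipped vector may exceed the cube again, and peeling off the random set $J$ changes the subspace/complement structure on the surviving coordinates; you yourself flag this as ``the technically hardest part''. As it stands, the argument yields a weaker, $N$-dependent bound for a subclass of distributions, not the stated $c(\eps)$.

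The paper resolves precisely this difficulty with a different key lemma: any central section $K=B_\infty^N\cap Q_n^\perp$ of dimension $m\ge\eps N$ satisfies $\vc(K,c_1(\eps))\ge c_2(\eps)N$. This is proved by combining Vaaler's theorem ($\Vol_m(\tfrac12B_\infty^N\cap L_m)\ge1$), a volumetric lower bound on covering numbers, and the Mendelson--Vershynin inequality $\log N_t(K,L_2(\mu))\le C\,\vc(K,ct)\log(2/t)$. The shattering produces a set $\Lambda$ with $|\Lambda|\ge c(\eps)N$ and, for every sign pattern $\sigma$ on $\Lambda$, a vector $z^\sigma\in K$ with $\sigma_i z^\sigma_i\ge c(\eps)$ on $\Lambda$ --- i.e.\ exactly the uniformly $\ell_\infty$-bounded dual vectors adapted to the random signs of $\xi$ that your truncation scheme fails to produce. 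Conditioning on $\{\sign\xi_i\}_{i\in\Lambda}$ and taking $\sigma=\sign\xi|_\Lambda$ then gives
$\E\rho(\xi,Q_n)_1\ge c(\eps)\sum_{i\in\Lambda}\E|\xi_i|-\sum_{i\notin\Lambda}\|\E(\xi_i\mid\{\sign\xi_j\}_{j\ne i})\|_{L_1}\ge c(\eps)N$
for independent (or merely unconditional) centered $\xi$, with no symmetry assumption and no concentration argument at all. Unless you supply a substitute for this lemma --- a constant-scale sub-cube inside every proportional-dimensional central section of the cube --- your sketch does not prove the theorem.
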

This also gives the rigidity for the standard widths, because $d_n \ge d_n^\avg$.
A classical example of an independent system is Rademacher system.

The same result holds if we replace the independence by the
unconditionality:
$\Law(\xi_1,\ldots,\xi_N)=\Law(\pm\xi_1,\ldots,\pm\xi_N)$.

\paragraph{Approximation in the space of measurable functions.}
Consider the space $L_0(\mathcal X,\mu)$ of all measurable functions on some measure
space $(\mathcal X,\mu)$. The convergence in measure is equivalent to
convergence in certain metrics, say
$\int_{\mathcal X}\frac{|f-g|}{1+|f-g|}\,d\mu$ or $\sup\{\eps\colon
\mu(|f-g|\ge\eps)\ge\eps\}$. We will use the latter. Of course, this metric is
weaker than usual $L_p$-metrics. The approximation in $L_0$ is much less studied
than in $L_p$ but we believe that this subject contains interesting
problems.

In Section~\ref{subsec_l0} we will prove that independence implies rigidity
even in the case of $L_0$ metric.
\begin{theorem}
    For any $\eps\in(0,1)$ there exists $\delta>0$, such that if $\xi_1,\ldots,\xi_N$
    are independent r.v. with $\inf_{c\in\R}\|\xi_i-c\|_{L_0}\ge\eps$,
    $i=1,\ldots,N$. Then
    $$
    d_n^{\mathrm{avg}}(\{\xi_1,\ldots,\xi_N\},L_0)\ge\delta\quad\mbox{if $n\le
    \delta N$.}
    $$
\end{theorem}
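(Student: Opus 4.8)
The plan is to reduce, via Markov's inequality, to a combinatorial core statement, and then to prove the latter by an entropy count against the VC/Sauer--Shelah bound for subspaces of measurable functions.

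\emph{Reduction.} It suffices to prove the following \emph{core statement}: there are $\eta_0=\eta_0(\eps)>0$ and $c=c(\eps)>0$ such that if $\xi_1,\dots,\xi_m$ are independent with $\inf_{a\in\R}\|\xi_k-a\|_{L_0}\ge\eps$, and $Q$ is a subspace of $L_0$ with $\dim Q\le n$ and $\rho(\xi_k,Q)_{L_0}\le\eta$ for all $k$ (where $\eta\le\eta_0$), then $n\ge c\,m$. Granting this, suppose $d_n^\avg(\{\xi_1,\dots,\xi_N\},L_0)<\delta$, realised by some $Q$ with $\dim Q\le n\le\delta N$; then at least $(1-\sqrt\delta)N$ of the indices satisfy $\rho(\xi_k,Q)_{L_0}<\sqrt\delta$, so applying the core statement to this sub-family with $\eta:=\sqrt\delta\le\eta_0$ gives $n\ge c(\eps)(1-\sqrt\delta)N$, which contradicts $n\le\delta N$ once $\delta=\delta(\eps)$ is small. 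The absence of any moment hypothesis is exactly what blocks the $\ell_1^N$-vector argument behind Theorem~\ref{th_l1_intro}, so the core statement must be proved differently.

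\emph{Binary labels with a margin (the delicate step).} Fix $g_k\in Q$ with $\|\xi_k-g_k\|_{L_0}\le\eta$, fix a basis $e_1,\dots,e_n$ of $Q$, write $g_k=\sum_j a_{kj}e_j$, and set $\Psi(\omega):=(e_1(\omega),\dots,e_n(\omega))\in\R^n$, so $g_k(\omega)=\langle a_k,\Psi(\omega)\rangle$. Put $\beta:=\sqrt\eta$; since $\beta>\eta$, the bound $\|\xi_k-g_k\|_{L_0}\le\eta$ forces $\P(|\xi_k-g_k|\ge\beta)<\beta$ — this scale trick is what turns $L_0$-closeness into a genuinely small error event. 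Now attach a binary label to each $\xi_k$. Applying $\inf_a\|\xi_k-a\|_{L_0}\ge\eps$ at a median $m_k$ gives, after possibly replacing $\xi_k,g_k$ by $-\xi_k,-g_k$, both $\P(\xi_k\ge m_k+\tfrac\eps2)\ge\tfrac\eps4$ and $\P(\xi_k\le m_k)\ge\tfrac12$. Choose a threshold $t_k$ in $(m_k,m_k+\tfrac\eps2)$ lying in a sub-interval of length $\sim\beta$ that carries $\xi_k$-mass $O(\beta/\eps)$ and is not an atom of $\Law(\xi_k)$ or $\Law(g_k)$. With $L_k:=\sign(\xi_k-t_k)\in\{-,+\}$ this gives simultaneously $\P(L_k=\pm)\in[\tfrac\eps4,1-\tfrac\eps4]$ and an event $G_k$ with $\P(G_k^{\,c})\le\beta':=O(\beta/\eps)$ on which $|\xi_k-t_k|\ge\beta$ and $|\xi_k-g_k|<\beta$, hence $\sign(g_k(\omega)-t_k)=L_k(\omega)$. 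Making the label genuinely two-valued (not small/large/ambiguous) while keeping it stable under $g_k\mapsto\xi_k$, uniformly over arbitrarily concentrated or heavy-tailed $\xi_k$, is the point that needs the most care; the scale-insensitivity of the $L_0$-metric is what tames the tails.

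\emph{Entropy count.} Write $h(t):=-t\log_2 t-(1-t)\log_2(1-t)$, let $H(\cdot)$ be Shannon entropy in base $2$, and set $L:=(L_k)_k$, $\Sigma:=(\sign(g_k(\cdot)-t_k))_k$, both valued in $\{-,+\}^m\cong\mathbb F_2^{\,m}$. Independence of the $\xi_k$ gives $H(L)=\sum_k H(L_k)\ge m\,h(\eps/4)$, while $\Sigma_k\ne L_k$ only on $G_k^{\,c}$, so subadditivity gives $H(L\oplus\Sigma)\le\sum_k h(\P(\Sigma_k\ne L_k))\le m\,h(\beta')$, whence
$$
H(\Sigma)\ \ge\ H(L)-H(L\mid\Sigma)\ \ge\ H(L)-H(L\oplus\Sigma)\ \ge\ m\bigl(h(\eps/4)-h(\beta')\bigr).
$$
Taking $\eta$ (hence $\beta'$) small in terms of $\eps$ so that $h(\beta')\le\tfrac12 h(\eps/4)$ yields $H(\Sigma)\ge c'(\eps)\,m$ with $c'(\eps):=\tfrac12 h(\eps/4)>0$, so $\Sigma(\omega)$ assumes at least $2^{c'(\eps)m}$ values. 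On the other hand $g_k(\omega)-t_k$ is an affine function of $\Psi(\omega)\in\R^n$, and since $t_k$ is not an atom of $\Law(g_k)$, for a.e.\ $\omega$ the point $\Psi(\omega)$ lies off every hyperplane $\{g_k=t_k\}$; hence $\Sigma(\omega)$ takes at most $\sum_{i=0}^n\binom mi\le 2^{m\,h(n/m)}$ values — the Sauer--Shelah / hyperplane-arrangement bound, reflecting that the sub-level sets of the $n$-dimensional space $Q$ form a VC class of dimension $\le n+1$. Comparing, $h(n/m)\ge c'(\eps)$, so $n\ge h^{-1}(c'(\eps))\,m$ when $n\le m/2$ (and $n>m/2$ already gives $n\ge\tfrac12 m$); thus the core statement holds with $c(\eps)=\min\{\tfrac12,\,h^{-1}(c'(\eps))\}>0$, $h^{-1}$ being the inverse of $h$ on $[0,\tfrac12]$. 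It then remains only to collect the finitely many smallness conditions on $\delta$ (essentially $\delta^{1/4}$ small in terms of $\eps$, and $\delta<\tfrac12 c(\eps)$) to finish the reduction.
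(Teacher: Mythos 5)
Your proposal is correct, but it takes a genuinely different route from the paper. The paper proves the stronger finite-dimensional statement of Theorem~\ref{th_l0}: for each fixed subspace $Q_n\subset\R^N$ the probability of a good $L_0^N$-approximation is exponentially small~\eqref{exp_prob}. Its proof conditions on the random sets of badly approximated and ``intermediate'' coordinates, chooses for each $\xi_k$ a low-mass gap $(a_k,b_k)$ with substantial mass on both sides (Lemma~\ref{lem_l0}, the analogue of your threshold construction), bounds the number of attainable sign patterns by $\sum_{i\le n}\binom{N'}{i}$ via the Sauer--Shelah bound~\eqref{vc_std} combined with the $\ell_\infty$-width/shattering inequality~\eqref{vc_width}, multiplies by the per-pattern probability $(1-\eps/3)^{N'}$ coming from independence, and finishes with union bounds; the width statement (Corollary~\ref{cor_indep_l0}) then follows through the transfer Lemma~\ref{lem_equiv_l0} between $d_n^\avg(\xi,L_0^N)$ and $d_n^\avg(\{\xi_k\},L_0)$. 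You instead work directly with an $n$-dimensional subspace $Q\subset L_0(\Omega)$, select the well-approximated sub-family by Markov, and replace the union-bound/probability count by an entropy comparison: the independent thresholded labels $L$ have entropy at least $m\,h(\eps/4)$, they coincide coordinatewise with the sign pattern $\Sigma$ of the approximants up to probability $O(\sqrt\eta/\eps)$ per coordinate, yet $\Sigma$ factors through $\Psi(\omega)\in\R^n$ and so takes at most $\sum_{i\le n}\binom{m}{i}$ values by the hyperplane-arrangement (VC) count. Both arguments share the same two pillars --- the quantile/pigeonhole choice of a low-mass gap exploiting $\inf_c\|\xi_k-c\|_{L_0}\ge\eps$, and Sauer--Shelah-type counting of sign patterns of an $n$-dimensional space --- but your entropy route avoids the conditioning bookkeeping and the transfer lemma, at the price of yielding only the width bound and not the exponential concentration~\eqref{exp_prob}, which the paper also exploits elsewhere (e.g.\ for $n$-term approximation and the random-matrix statements). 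One small point to tidy up: to guarantee $\P(|\xi_k-t_k|<\beta)=O(\beta/\eps)$ you should pigeonhole over subintervals of length a fixed multiple of $\beta$ (say $4\beta$) and place $t_k$ in the middle of the selected one, so that the radius-$\beta$ neighbourhood of $t_k$ stays inside it; as written (``sub-interval of length $\sim\beta$'') the neighbourhood can leak out, but this is a routine adjustment and does not affect the argument.
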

This is also a corollary of a finite-dimensional result. In fact there is an exponential bound
$$
\P(\rho(\xi,Q_n)_{L_0^N} \le \delta)\le 2\exp(-\delta N),\quad\dim Q_n\le n.
$$
Let us mention a recent result by B.S.~Kashin~\cite{K21}.
\begin{exttheorem}
    There exist absolute constants $0<\gamma_0<1$, $c_1,c_2>0$, such that for
    $N=1,2,\ldots$ for any orthonormal basis $\Psi=\{\psi_k\}_{k=1}^N$ in $\R^N$
    and any linear subspace $Q\subset\R^N$, $\dim Q\le c_1 N$, the inequality
    holds
    $$
    \P\{\eps\in\{-1,1\}^N\colon \rho(\sum_{k=1}^N\eps_k\psi_k,Q)_{L_0^N}\le c_2\}\le
    \gamma_0^N.
    $$
\end{exttheorem}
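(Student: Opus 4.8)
The plan is to reduce the statement to a single concentration estimate: that the Euclidean norm of the orthogonal projection of a random sign vector onto a fixed subspace of dimension at most $N/2$ is exponentially unlikely to be almost as large as $\sqrt N$. Write $v_\eps:=\sum_k\eps_k\psi_k=\Psi\eps$, where $\Psi$ is the orthogonal matrix with columns $\psi_k$; the structural fact I will use throughout is $\|v_\eps\|_2=\|\eps\|_2=\sqrt N$ for \emph{every} sign pattern $\eps$. First I would unwind the $L_0^N$-metric. If $\rho(v_\eps,Q)_{L_0^N}\le c_2$, then for some $q\in Q$ and some $\eta$ just above $c_2$ the set $S=\{i\colon|(v_\eps-q)_i|\ge\eta\}$ satisfies $|S|<\eta N\le 2c_2N$, while $|(v_\eps-q)_i|<2c_2$ for $i\notin S$; correcting $q$ on the coordinates in $S$ gives a vector of $Q+\R^S$ within $\ell_2$-distance $2c_2\sqrt N$ of $v_\eps$. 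Thus
$$
\rho(v_\eps,Q)_{L_0^N}\le c_2\ \Longrightarrow\ \exists\,S,\ |S|\le 2c_2N,\quad \rho\bigl(v_\eps,\,Q+\R^S\bigr)_{\ell_2^N}\le 2c_2\sqrt N,
$$
where $\R^S:=\operatorname{span}\{e_i\colon i\in S\}$. Since $\dim(Q+\R^S)\le(c_1+2c_2)N$, I will choose $c_1,c_2$ small enough that this is at most $N/2$.

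For the main step, fix a subspace $P$ with $\dim P\le N/2$ and put $R:=\Psi^{\top}\Pi_P\Psi$, again an orthogonal projection, with $\tr R=\dim P\le N/2$. Using $\|v_\eps\|_2^2=N$ and Pythagoras, the event $\rho(v_\eps,P)_{\ell_2^N}\le 2c_2\sqrt N$ is exactly $\|R\eps\|_2\ge\sqrt{1-4c_2^2}\,\sqrt N$. Now $\eps\mapsto\|R\eps\|_2$ changes by at most $2$ when a single coordinate of $\eps$ is flipped (because $\|R\|_{\mathrm{op}}\le1$), and $\E\|R\eps\|_2\le(\E\|R\eps\|_2^2)^{1/2}=(\tr R)^{1/2}\le\sqrt{N/2}$, so McDiarmid's bounded-differences inequality yields
$$
\P\bigl(\|R\eps\|_2\ge\sqrt{1-4c_2^2}\,\sqrt N\bigr)\le\exp\!\Bigl(-\tfrac12\bigl(\sqrt{1-4c_2^2}-\tfrac1{\sqrt2}\bigr)^{2}N\Bigr)\le e^{-\kappa N}
$$
for an absolute $\kappa>0$, valid as soon as $c_2$ is small enough that $\sqrt{1-4c_2^2}>1/\sqrt2$. (Alternatively one may invoke a Hanson--Wright-type upper-tail bound for the Rademacher quadratic form $\eps^{\top}(I-R)\eps$, whose mean is $\ge N/2$.)

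Finally I would union-bound over the at most $\binom{N}{\le 2c_2N}\le e^{H(2c_2)N}$ choices of $S$ (with $H$ the natural-logarithm binary entropy), applying the previous estimate to each fixed subspace $P=Q+\R^S$:
$$
\P\bigl(\rho(v_\eps,Q)_{L_0^N}\le c_2\bigr)\le e^{H(2c_2)N}\cdot e^{-\kappa N}=\gamma_0^N,\qquad \gamma_0:=e^{H(2c_2)-\kappa}.
$$
It remains to order the constants: choose $c_2$ small enough that $H(2c_2)<\kappa$ (possible since $\kappa$ stays bounded away from $0$ as $c_2\to0$ while $H(2c_2)\to0$), and then $c_1$ small enough that $c_1+2c_2\le\tfrac12$; this gives $\gamma_0<1$. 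I expect the only genuine obstacle to be the concentration step: everything rests on the gap between the typical value $\E\|R\eps\|_2\le\sqrt{N/2}$ and the target $\approx\sqrt N$, which is exactly the combination of the dimension bound $\dim P\le N/2$ and the deterministic length $\|v_\eps\|_2=\sqrt N$. The rest is routine, modulo the bookkeeping of how small $c_2$ must be relative to $\kappa$ and to the entropy term.
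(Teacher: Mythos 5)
Your argument is correct, but note for the comparison that the paper does not prove this statement at all: it is quoted (as an external theorem) from Kashin's note~\cite{K21}, so there is no in-paper proof to match against. The closest internal argument is Theorem~\ref{th_l0}, the $L_0$-rigidity result for \emph{independent} coordinates, which is proved via quantile separation, a VC-shattering bound and a count over sign patterns; that route uses independence essentially and would not apply verbatim here, since the coordinates of $\Psi\eps$ are dependent. Your proof instead exploits exactly the orthonormal structure: the deterministic identity $\|\Psi\eps\|_2=\sqrt N$; the reduction of an $L_0^N$-approximation to an $\ell_2^N$-approximation after a sparse correction on at most $2c_2N$ coordinates (paid for by the entropy factor $e^{H(2c_2)N}$ in the union bound over $S$, or equivalently the bound~\eqref{binom}); Pythagoras to rewrite closeness to the fixed subspace $P=Q+\R^S$ as $\|R\eps\|_2\ge\sqrt{1-4c_2^2}\,\sqrt N$ with $R=\Psi^{\top}\Pi_P\Psi$ an orthogonal projection of trace at most $N/2$; and McDiarmid's inequality for $\eps\mapsto\|R\eps\|_2$ (bounded differences $2$, mean at most $\sqrt{N/2}$). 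I checked the steps and the bookkeeping: the non-attainment of the infimum over $q\in Q$ is harmless since you work with $\eta$ slightly above $c_2$; $\kappa=\tfrac12\bigl(\sqrt{1-4c_2^2}-2^{-1/2}\bigr)^2$ stays bounded away from $0$ as $c_2\to0$ while $H(2c_2)\to0$, so $\gamma_0<1$ is achievable, and $c_1+2c_2\le\tfrac12$ keeps $\dim(Q+\R^S)\le N/2$. So this is a complete, self-contained and rather elementary proof of the quoted result; what it buys is simplicity in the orthonormal-plus-random-signs setting by leaning on $\ell_2$-geometry and scalar concentration, whereas the paper's own VC-based machinery (used for Theorem~\ref{th_l0}) is what handles general independent, non-orthogonal systems where no exact norm identity is available.
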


It is known that lacunary series $\sum c_j\varphi(k_jx)$ behave like sums of
independent variables, see e.g.~\cite{Gap66}. The rigidity property is not an
exception. We will prove that lacunary systems
$\{\varphi(k_1x),\ldots,\varphi(k_Nx)\}$ are rigid in $L_0$ if $\min
k_{j+1}/k_j$ is large enough, see Statement~\ref{stm_lacunary}.

Another source of rigid systems is random matrices. It is well-known that a
square $N\times N$ matrix $\mathcal E$ consisting of random signs is rigid with high probability:
$\Rig(\mathcal E,\delta N)\ge \delta N^2$. We observe that this is true in weaker
$L_0$-metric:
$$
\P(\min_{\rank B\le\delta N}\|\mathbf{\mathcal E}-B\|_{L_0^{N\times
N}}<\delta)\le 2\exp(-\delta N^2).
$$
This gives us an example of a $L_0$-rigid orthonormal system
$\{\varphi_1,\ldots,\varphi_N\}$ of functions that are piecewise constant on
intervals $(\frac{j-1}N,\frac{j}N)$.

There is another metric in $L_0(\mathcal X,\mu)$, namely, $\mu\{x\colon f(x)\ne g(x)\}$.
We denote it by $\LH$; the distance $\|f-g\|_{\LH}$ is the analog of Hamming
distance. (In Probability it is
called the ``indicator distance''.)
This metric appears in Function theory in the context of ``correction''
theorems. E.g., Lusin's theorem states that $C$ is dense in $\LH$ and Menshov's
theorem implies that the set of functions with uniformly convergent
Fourier series is dense in $\LH$.
Note that the matrix rigidity is a particular case of average
Kolmogorov widths in normalized Hamming distance $\LH^N$. Let $A$ be an $M\times N$ real
matrix and $W_A=\{A_1,\ldots,A_M\}\subset\R^N$ be the set of rows of $A$. Then
$$
\Rig(A,n) = M\cdot d_n^{\mathrm{avg}}(W_A,\LH^N).
$$

Some of our results may be stated in matrix terms; in some cases matrix language
is even more convenient. However, in this paper we prefer to stick to the
functional-theoretic language. See also~\cite{KMR18}.

\paragraph{$S_{p'}$-property.}
In the case $1<p<2$ we establish a less restrictive sufficient condition for
rigidity.
\begin{theorem}
    Let $\varphi_1,\ldots,\varphi_N$ be an orthonormal system in a space
    $L_2(\mathcal X,\mu)$, $\mu(\mathcal X)=1$.  Let $1<p<2$ and suppose that $\{\varphi_k\}$ has
    $S_{p'}$-property, i.e. for some $B\ge1$ one has
    $$
    \|\sum_{k=1}^N a_k\varphi_k \|_{p'} \le B|a|,\quad\forall a\in\R^N.
    $$
    Then this system is rigid in $L_p$:
    $$
    d_n^{\mathrm{avg}}(\{\varphi_1,\ldots,\varphi_N\},L_p)_p \ge
    c(B,\eps,p)>0,\quad\mbox{if $n\le N(1-\eps)$}.
    $$
\end{theorem}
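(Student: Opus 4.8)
The plan is to derive the bound from Banach-space duality, running Stechkin's $L_2$ computation behind~\eqref{l2width} through the $L_p$--$L_{p'}$ pairing. It suffices to show that for \emph{every} subspace $Q_n\subset L_p$ with $\dim Q_n\le n\le N(1-\eps)$ one has
$$
\frac1N\sum_{k=1}^N\rho(\varphi_k,Q_n)_{L_p}\ \ge\ \frac{\eps}{B},
$$
since for $p\ge1$ the quantity $d_n^{\avg}(\{\varphi_k\},L_p)_p$ is bounded below by this plain average and passing to the infimum over $Q_n$ preserves a lower bound. The only duality fact needed is the elementary half: if $h\in L_{p'}$ satisfies $\|h\|_{p'}\le1$ and annihilates $Q_n$ (that is, $\int qh\,d\mu=0$ for all $q\in Q_n$), then for every $q\in Q_n$,
$$
\int\varphi_k h\,d\mu=\int(\varphi_k-q)h\,d\mu\le\|\varphi_k-q\|_{L_p}\|h\|_{L_{p'}}\le\|\varphi_k-q\|_{L_p},
$$
so that $\rho(\varphi_k,Q_n)_{L_p}\ge\int\varphi_k h\,d\mu$. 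The whole task is therefore to produce, for each $k$, a good annihilating test function $h=h_k$.

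I would look for $h$ inside $V:=\operatorname{span}\{\varphi_1,\dots,\varphi_N\}$, and this is exactly where the $S_{p'}$-hypothesis is used: it says $V\subset L_{p'}$ with $\|\sum_j b_j\varphi_j\|_{p'}\le B|b|$ for all $b\in\R^N$. For $h=\sum_j b_j\varphi_j$ and $q\in Q_n$ the pairing is $\int qh\,d\mu=\sum_j b_j\int q\varphi_j\,d\mu=\langle b,v(q)\rangle_{\R^N}$, where $v(q):=\bigl(\int q\varphi_j\,d\mu\bigr)_{j=1}^N$; this makes sense because $q\in L_p$ and $\varphi_j\in L_{p'}$, so $q\varphi_j\in L_1$ by H\"older. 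Hence $h$ annihilates $Q_n$ precisely when $b\in W^\perp$, where $W:=v(Q_n)\subset\R^N$ is the image of the linear map $v$, so $\dim W\le\dim Q_n\le n$ and $\dim W^\perp\ge N-n$. For such $b$, orthonormality gives $\int\varphi_k h\,d\mu=\sum_j b_j\int\varphi_k\varphi_j\,d\mu=b_k$, while $\|h\|_{p'}\le B|b|$; writing $b_k=\langle b,P_{W^\perp}e_k\rangle$ and optimizing over $0\ne b\in W^\perp$ yields
$$
\rho(\varphi_k,Q_n)_{L_p}\ \ge\ \sup_{0\ne b\in W^\perp}\frac{|b_k|}{B|b|}\ =\ \frac{1}{B}\,\bigl|P_{W^\perp}e_k\bigr|.
$$

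Summing over $k$ and using that an orthoprojection is a contraction, so $|P_{W^\perp}e_k|\ge|P_{W^\perp}e_k|^2$, together with $\sum_k|P_{W^\perp}e_k|^2=\tr(P_{W^\perp})=\dim W^\perp\ge N-n$, one obtains $\frac1N\sum_k\rho(\varphi_k,Q_n)_{L_p}\ge\frac{N-n}{BN}\ge\frac{\eps}{B}$, which is the claim with $c(B,\eps,p)=\eps/B$ --- note that $p$ does not actually appear in the constant, and $\mu(\mathcal X)=1$ is never used (it only makes the hypothesis natural, since then $|a|=\|\sum_k a_k\varphi_k\|_2\le\|\sum_k a_k\varphi_k\|_{p'}\le B|a|$, a genuine two-sided comparison). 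The point that I expect to need the most care is already visible above: $Q_n$ is an \emph{arbitrary} $n$-dimensional subspace of $L_p$, whose elements need not lie in $L_2$, so one must not pair them with the $\varphi_j$ via the $L_2$ inner product nor project $Q_n$ orthogonally onto $V$; what makes the argument go through is exactly that the $S_{p'}$-bound forces $\varphi_j\in L_{p'}$, which legitimizes the H\"older pairing $\int q\varphi_j\,d\mu$, after which only finite-dimensional linear algebra remains. The factor $B$ is precisely the toll charged for replacing orthonormality in $L_2$ (the case $B=1$, $p'=2$) by the one-sided $S_{p'}$-inequality.
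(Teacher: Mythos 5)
Your proof is correct, but it takes a genuinely different route from the paper's. The paper deduces this theorem from a finite-dimensional probabilistic result in the spirit of Gluskin (Theorem~\ref{th_p12}): for the random vector $\xi=(\varphi_1(x),\ldots,\varphi_N(x))$ it checks the moment condition~\eqref{A_condition} with $A=B$, $\gamma=p'-2$, identifies~\eqref{B_condition} with the $S_{p'}$-property, obtains $\rho(\xi,Q_n)_{\ell_p^N}\ge c\,N^{1/p}B^{-1}$ with probability bounded below, and transfers the bound to the function system via Statement~\ref{stm_equiv}. You instead argue by duality directly in $L_p$: for each $k$ you exhibit an annihilating test function $h=\sum_j b_j\varphi_j$ inside the span of the system, with $b$ orthogonal to the at most $n$-dimensional image $W$ of $Q_n$ under $q\mapsto\bigl(\int q\varphi_j\,d\mu\bigr)_j$, so that $\rho(\varphi_k,Q_n)_{L_p}\ge |b_k|/(B|b|)=|P_{W^\perp}e_k|/B$, and the trace identity $\sum_k|P_{W^\perp}e_k|^2=\dim W^\perp\ge N-n$ finishes the argument; your cautionary points are exactly the right ones (the $S_{p'}$-hypothesis forces $\varphi_j\in L_{p'}$, which legitimizes pairing with an arbitrary $Q_n\subset L_p$ whose elements need not lie in $L_2$, and the power-mean step reduces the $p$-average to the plain average). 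What your route buys: it is more elementary (no tail/moment estimates, no transference through $\ell_p^N$), it yields the explicit constant $\eps/B$, linear in $B^{-1}$ and independent of $p$, and it actually bounds the $1$-averaged width, which is formally stronger. What the paper's route buys: Theorem~\ref{th_p12} is a statement about general isotropic random vectors satisfying~\eqref{A_condition}--\eqref{B_condition}, and it gives a lower bound holding with constant probability rather than only on average, a form that is reused elsewhere (cf.\ the Remark on the exact dependence on $B$ and Statement~\ref{stm_explicit_rigid}); your computation recovers, with a cleaner constant, the special case needed for orthonormal systems.
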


Note that Bourgain's theorem~\cite{B89} states that any uniformely bounded orthornomal
system has a $S_{p'}$-subsystem of size $N^{2/p'}$; this subsystem is rigid in
$L_p$. Hence the $o(1)$-approximation of the uniformely bounded orthornomal
system in $L_p$ requires the dimension at least $n\ge \frac12 N^{2/p'}$. In
particular, in Theorem~\ref{thm_walsh_p12} we indeed should have the polynomial
dependence between $n$ and $N$.

The analog of Theorem~\ref{th_l1_intro} also holds true for $1<p\le 2$ (we do not
prove this here) but it fails for $p>2$.

\paragraph{Positive approximation results.}

Let $G$ be a finite Abelian group and $\widehat{G}$ be the dual group of
characters $\chi\colon G\to\mathbb C^*$.

From~\cite{DL19} it follows that characters are not rigid in $\LH$:
$$
d_n(\widehat{G},\LH)\le N^{-1+c\eps},\quad
\mbox{if $n\ge N\exp(-c(\eps)\log^{c_2}N)$, $N:=|G|$}.
$$
Such small error of approximation, $N^{-1+c\eps}$, is natural in the context of Matrix
Rigidity due to the Valiant's result on circuit complexity.
However, we are interested in approximation in different ``regimes''. We
consider systems generated by specific characters, namely, Walsh and trigonometric systems. First we
show that they admit good approximation by very-low-dimensional spaces.
\begin{theorem}
    We have
    $$
    d_n(\{w_1,\ldots,w_N\},\LH[0,1])\le \exp(-c\log^{1/4}N),\quad
    \mbox{if $n\ge\exp(C\log^{2/3}N)$.}
    $$
    $$
    d_n(\{e(kx)\}_{k=-N}^N, L_0(\mathbb T))\le \exp(-c\log^{0.2}N),\quad
    \mbox{if $n\ge\exp(C\log^{0.9}N)$.}
    $$
\end{theorem}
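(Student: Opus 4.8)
The plan is to prove both estimates by the mechanism behind the Alman–Williams analysis of the Walsh–Hadamard matrices: one splits the $\sim\log N$ many ``coordinates'' into blocks, within each block replaces the relevant character by a low‑degree polynomial that is correct except on a small‑probability event (via concentration of a sum of independent bounded variables), and then tensors the block‑wise approximants together. Since widths only decrease when we pass to subsets, we may assume $N=2^{n}$ in the Walsh part, and identify $[0,1)$ (up to the null set of dyadic rationals) with $\{0,1\}^{n}$ under $x\mapsto(x_{1},\dots,x_{n})$ (binary digits), so that $w_{k}$ becomes a character $y\mapsto(-1)^{\langle a,y\rangle}$, $a\in\{0,1\}^{n}$ the binary expansion of the index.

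Fix a number of blocks $t$, a block size $b=n/t$, a degree $w$, and a partition of the coordinates into $B_{1},\dots,B_{t}$. For a fixed $a$ write $\langle a,y\rangle=\sum_{j}Z_{j}$ with $Z_{j}=\sum_{i\in B_{j}}a_{i}y_{i}$. By Hoeffding's inequality $Z_{j}$ stays in a window $I_{j}(a)$ of $w$ consecutive integers around $|a^{(j)}|/2$ off an event of probability $\le 2e^{-\Omega(w^{2}/b)}$; on $I_{j}(a)$ the function $z\mapsto(-1)^{z}$ equals a polynomial $P_{j,a}$ of degree $\le w$ by Lagrange interpolation. Put $q_{k}:=\prod_{j}P_{j,a}(\langle a^{(j)},y^{(j)}\rangle)$. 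Then $\|w_{k}-q_{k}\|_{\LH}\le 2t\,e^{-\Omega(w^{2}/b)}$, and — the key point — since a degree‑$\le w$ polynomial of the linear form $\sum_{i}a_{i}y_{i}$ expands into multilinear monomials of degree $\le w$ in each block, $q_{k}$ lies in the \emph{fixed} subspace $Q:=\mathrm{span}\{\prod_{i\in S}y_{i}:\ |S\cap B_{j}|\le w\ \forall j\}$ with $\dim Q\le\binom{b}{\le w}^{t}$, a bound independent of $k$. Using $\ln\binom{b}{\le w}^{t}\le t\bigl(\ln 2+w\ln(eb/w)\bigr)$, a choice such as $t\asymp (\log N)^{1/12}(\log\log N)^{-2}$, $b=n/t$, $w\asymp (\log N)^{7/12}\log\log N$ yields $w^{2}/b\asymp(\log N)^{1/4}$ and $t\,w\ln(eb/w)\asymp(\log N)^{2/3}$, i.e. $\dim Q\le\exp(C\log^{2/3}N)$ and error $\le\exp(-c\log^{1/4}N)$; the $\log\log N$ factors are exactly what absorbs the $\ln(eb/w)$ loss and makes the exponents come out clean.

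For the trigonometric system the same philosophy applies, but the underlying group is less forgiving. Because $e(kx)$ has derivative $O(N)$, for $n'$ slightly larger than $\log_{m}N$ the map $x\mapsto\lfloor m^{n'}x\rfloor m^{-n'}$ changes each $e(kx)$ by at most $2\pi Nm^{-n'}$ in sup‑norm, so it suffices to approximate the characters $j\mapsto e^{2\pi i kj/M}$ of $\mathbb Z_{M}$, $M=m^{n'}=N^{1+o(1)}$, in $L_{0}$ — equivalently, to use the product representation $e(kx)=\prod_{p=1}^{n'}e^{2\pi i kx_{p}m^{-p}}$ over the \emph{independent} digits $x_{p}$. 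For a block $\mathcal P_{r}$ the exponent of the corresponding factor is again a linear form $\Theta_{r}=\sum_{p\in\mathcal P_{r}}(km^{-p})x_{p}$ in the coordinates $x_{p}$; hence a degree‑$\le D$ polynomial $P_{r}(\Theta_{r})$ of it lies in the $k$‑independent span of the monomials $\prod_{p\in\mathcal P_{r}}x_{p}^{\alpha_{p}}$, $|\alpha|\le D$, and, on the event that $\Theta_{r}$ stays on a window on which $e^{2\pi i\Theta_{r}}$ is well approximated by its degree‑$D$ Taylor polynomial, $P_{r}(\Theta_{r})$ reproduces the factor. One then tensors over $r$ and a Hoeffding bound for the collection of events, as before, controls the total $L_{0}$ error; balancing $m,n',t,D$ is what produces the (weaker) exponents $0.9$ and $0.2$ in place of $2/3$ and $1/4$.

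The main obstacle is precisely this last balancing. Unlike the Boolean cube, a cyclic group large enough to carry the characters $e(kx)$, $|k|\le N$, unavoidably has a ``heavy'' scale on which the relevant exponential oscillates $\asymp\log N$ times (for Walsh every coordinate carries a character of period $2$; for the trigonometric digits the weights $m^{-p}$ are geometric, so the most significant digits oscillate the most), and a naive blocking forces the required degree $D$ above the block alphabet size, which only gives a subspace of dimension $\gtrsim N$. To extract a $\mathrm{poly}(\log N)$‑dimensional space one must treat the heavy scales by a separate device — a multiscale splitting $e(kx)=e(k'x)\,e(k''x)$ that trades dimension for a reduction of the effective frequency, interleaved with the polynomial step — and the technical heart of the trigonometric estimate is the bookkeeping of errors across these scales: making sure the union bound over all blocks and all scales still beats $\exp(-c\log^{0.2}N)$ while the total dimension stays below $\exp(C\log^{0.9}N)$.
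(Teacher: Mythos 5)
Your Walsh half is correct and is essentially the Alman--Williams mechanism the paper uses, implemented in a slightly different way: you block the $k\approx\log_2 N$ Boolean coordinates, interpolate the sign character on a Hoeffding window inside each block, and tensor, getting dimension $\binom{b}{\le w}^t$; the paper instead uses one global window of length $O(\lambda\sqrt k)$, which works only for indices $x$ of near-balanced weight, and then covers the remaining characters by the translation trick $d_{mn}(\widehat G,\LH)\le d_m(A,\LH)+d_n(B,\LH)$ for $|A|+|B|>|G|$ (Lemma~\ref{lem_half}), with the span bounded by the multilinear monomials of degree $\le 4\lambda\sqrt k$. Both routes give the stated exponents ($w^2/b\asymp\log^{1/4}N$, $tw\log(b/w)\asymp\log^{2/3}N$ in your normalization, $\lambda^2\asymp\log^{1/4}N$, $\lambda\sqrt k\log k\ll\log^{2/3}N$ in the paper's), and your blocking even spares you the balancedness restriction and Lemma~\ref{lem_half}, since the interpolation window may depend on $a$ without enlarging the monomial span. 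This part is fine.

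The trigonometric half, however, has a genuine gap, and it is exactly the one you flag yourself. After discretizing to $\mathbb Z_M$ you expand only the \emph{point} in base $m$ and keep the frequency $k$ intact, so the block exponents $\Theta_r=\sum_{p\in\mathcal P_r}(km^{-p})x_p$ have coefficients of size up to $\sim k/m$; on any window the digits can realize, $e^{2\pi i\Theta_r}$ oscillates $\gg1$ times, a degree-$D$ Taylor polynomial cannot reproduce it, and concentration does not shrink the window below the oscillation scale. You acknowledge this (``heavy scales'') and gesture at ``a multiscale splitting $e(kx)=e(k'x)e(k''x)$ interleaved with the polynomial step,'' but you never construct it, never specify parameters, and never verify that dimension stays below $\exp(C\log^{0.9}N)$ while the error beats $\exp(-c\log^{0.2}N)$ --- so the claimed second bound is not proved. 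The paper resolves precisely this obstacle by a different device: expand \emph{both} $x$ and $y$ in binary in $\mathbb Z_{2^k}$, so that by $1$-periodicity $e(xy/2^k)=e\bigl(\sum_{s\ge1}2^{-s}\sigma_s\bigr)$ with $\sigma_s=\sum_{i+j=k-s}x_iy_j$ an \emph{integer-valued} sum of independent bits; the scales $s>s_0$ are discarded with uniform error $\le 2\pi k2^{-s_0}$, and for each retained scale the function $t\mapsto e(2^{-s}t)$ is interpolated \emph{exactly} (Lagrange, not Taylor) at the $O(\lambda\sqrt k)$ consecutive integers where $\sigma_s$ concentrates, so rapid oscillation is irrelevant; multiplying the $s_0\asymp\lambda^2$ per-scale polynomials multiplies the ranks, and the balanced-$x$ restriction plus Lemma~\ref{lem_half} finishes the argument, with $\lambda^2\asymp\log^{0.2}N$ giving dimension $\exp\bigl(O(\lambda^3\sqrt k\log k)\bigr)\le\exp(C\log^{0.9}N)$. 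Without this bilinear digit expansion (or a worked-out substitute for your heavy-scale splitting), your trigonometric estimate remains an unproved plan rather than a proof.
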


It is known that any harmonic can be approximated 
by other harmonics in $L_p$, $p<1$.
\begin{exttheorem}[\cite{IvYu80}]
    For any $0<p<1$ we have uniformly in $M$ and $N\ge M$,
    $$
    \inf\|\cos Mx-\sum_{|k|\le N,\,k\ne M}c_ke(kx)\|_p \asymp (N-M+1)^{1-1/p}.
    $$
\end{exttheorem}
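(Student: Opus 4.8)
The plan is to reduce both inequalities to a single extremal problem about trigonometric polynomials. Write $f(x)=\cos Mx-\sum_{|k|\le N,\,k\ne M}c_ke(kx)$. This is a trigonometric polynomial of degree $\le N$ whose Fourier coefficient at frequency $M$ is \emph{forced} to equal that of $\cos Mx$ (namely $\tfrac12$ if $M\ge1$ and $1$ if $M=0$), while every other coefficient with $|k|\le N$ is at our disposal; conversely every $f\in\mathcal T_N$ with this one coefficient prescribed arises in this way. Hence $|\widehat f(M)|\ge\tfrac12$, and the theorem is equivalent to the two-sided estimate
\[
  \min\bigl\{\|f\|_p:\ f\in\mathcal T_N,\ \widehat f(M)=\tfrac12\bigr\}\asymp(N-M+1)^{1-1/p},
\]
where $\mathcal T_N$ denotes the trigonometric polynomials of degree $\le N$; I shall write out the case $M\ge1$, the constant case $M=0$ being identical with $1$ in place of $\tfrac12$. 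Multiplying such an $f$ by the unimodular function $e(Nx)$ and taking the complex conjugate produces the boundary values of an algebraic polynomial $P(z)=\sum_{k=0}^{2N}a_kz^k$ with $\|P\|_{H^p}=\|f\|_p$ and $a_{N-M}=\overline{\widehat f(M)}=\tfrac12$. Thus everything reduces to the question: among polynomials of degree $\le2N$, how small can the $H^p$-quasinorm be when the coefficient at index $L:=N-M$ is fixed?

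For the lower bound this is exactly the Hardy--Littlewood inequality for Taylor coefficients of $H^p$ functions with $0<p<1$: $|a_n|\le C_p\,(n+1)^{1/p-1}\|P\|_{H^p}$. I would recall its short proof. Factor $P=B\cdot O$ with $B$ a finite Blaschke product and $O$ outer, so $\|O\|_{H^p}=\|P\|_{H^p}$; write $O=G^{2/p}$ with $G\in H^2$, $\|G\|_{H^2}^2=\|O\|_{H^p}^p$; using $|G(re^{i\theta})|\le\|G\|_{H^2}(1-r^2)^{-1/2}$, the integral mean $M_1(r,P)=\tfrac1{2\pi}\int_0^{2\pi}|P(re^{i\theta})|\,d\theta$ obeys
\[
  M_1(r,P)\le\tfrac1{2\pi}\int_0^{2\pi}|G(re^{i\theta})|^{2/p}\,d\theta\le\|G\|_{H^2}^{2/p}(1-r^2)^{1-1/p}=\|P\|_{H^p}(1-r^2)^{1-1/p};
\]
then $|a_n|\le r^{-n}M_1(r,P)$, and the choice $r=1-\tfrac1{n+1}$ gives $|a_n|\lesssim_p(n+1)^{1/p-1}\|P\|_{H^p}$. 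Applying this with $n=L$ (and $|a_0|\le\|P\|_{H^p}$ when $L=0$) yields $\tfrac12\le C_p(N-M+1)^{1/p-1}\|f\|_p$, i.e.\ the desired lower bound.

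For the upper bound I would exhibit an explicit approximant. It suffices to find a trigonometric polynomial $g_0$ of degree $\le L=N-M$ with $\widehat{g_0}(0)=1$ and $\|g_0\|_p\lesssim_p L^{1-1/p}$; then
\[
  T(x):=\tfrac12e(-Mx)+\tfrac12e(Mx)\bigl(1-g_0(x)\bigr)
\]
is admissible (its spectrum lies in $[2M-N,N]\subseteq[-N,N]$ and the frequency-$M$ coefficient vanishes because $\widehat{(1-g_0)}(0)=0$), while $\cos Mx-T(x)=\tfrac12e(Mx)g_0(x)$, so $\|\cos Mx-T\|_p=\tfrac12\|g_0\|_p$. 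For $L$ below a constant depending on $p$ one takes $g_0\equiv1$. For larger $L$ I would take a high power of the Fej\'er kernel $\Phi_m(x)=\bigl(\tfrac{\sin(mx/2)}{\sin(x/2)}\bigr)^2$: set $g_0=I^{-1}\Phi_m^{\,s}$, where $s=s(p)$ is the least integer with $2sp>1$, $m=\lfloor L/s\rfloor$ (so $\deg g_0\le L$), and $I=\tfrac1{2\pi}\int\Phi_m^{\,s}$ normalises the mean to $1$. Splitting $\T$ into the peak $|x|\lesssim1/m$, where $\Phi_m\asymp m^2$, and the range $1/m\lesssim|x|\le\pi$, where $\Phi_m(x)\lesssim x^{-2}$ and $\int_{1/m}^\pi x^{-2sp}\,dx\asymp_p m^{2sp-1}$ thanks to $2sp>1$, gives $I\asymp_p m^{2s-1}$ and $\|g_0\|_p^p\asymp_p I^{-p}m^{2sp-1}\asymp_p m^{p-1}\asymp_p L^{p-1}$, as required.

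The main obstacle is the upper bound when $p$ is small. The plain Fej\'er kernel only achieves $\|F_m\|_p\asymp m^{-1}$ once $p\le\tfrac12$, which is far from the target $m^{1-1/p}$, so one genuinely must raise it to a power $s=s(p)$ with $2sp>1$, the point being that then the peak of $\Phi_m^{\,s}$ near the origin dominates its tail in the $L_p$-quasinorm. Everything else is either a routine estimate or the classical Hardy--Littlewood bound; the unimodular substitution $f\mapsto e(Nx)\overline f$ is the device that makes the two halves of the argument coincide.
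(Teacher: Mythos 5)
Your proof is correct, but be aware that the paper contains no proof of this statement to compare against: it is quoted as an external theorem from Ivanov--Yudin \cite{IvYu80}, so what you have written is a complete, self-contained argument for a result the paper only cites. Both halves check out. For the lower bound, the reduction $f\mapsto e(Nx)\overline{f}$ placing the forced coefficient $\tfrac12$ at index $L=N-M$ of an analytic polynomial, followed by the Hardy--Littlewood coefficient bound $|a_n|\le C_p(n+1)^{1/p-1}\|\cdot\|_{H^p}$ via the Blaschke factorization and the substitution $O=G^{2/p}$, is sound; the only pedantic remark is that you need the non-Blaschke factor merely to be zero-free in the open disc (automatic for a polynomial after removing the Blaschke factors), though for polynomials it is in fact outer, so your phrasing is harmless. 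For the upper bound, the transfer $T=\tfrac12 e(-Mx)+\tfrac12 e(Mx)\bigl(1-g_0\bigr)$ with $\cos Mx-T=\tfrac12 e(Mx)g_0$ correctly reduces everything to minimizing $\|g_0\|_p$ over mean-one polynomials of degree at most $L$, and you rightly identify the key point for small $p$: a single Fej\'er kernel only gives $\|F_m\|_p\asymp m^{-1}$ once $p\le\tfrac12$, so one must take the normalized $s$-th power with $2sp>1$, and your peak/tail computation giving $I\asymp m^{2s-1}$ and $\|g_0\|_p\asymp m^{1-1/p}\asymp_p L^{1-1/p}$ is accurate (with the degenerate regime $L\lesssim_p 1$ handled by $g_0\equiv1$). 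It is worth noticing that your kernel construction is precisely the sharp version of the device the paper sketches in Lemma~\ref{lem_poly}, where a plain Fej\'er kernel suffices because only some saving $m^{-\delta}$ is needed there, whereas here the exact exponent $1-1/p$ forces the power trick; the $H^p$-coefficient route for the lower bound is likewise in the spirit of the classical Ivanov--Yudin argument, so your proof is a legitimate reconstruction of the cited result rather than a divergence from anything in the paper.
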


We bound the \textit{trigonometric widths} $d_n^T$ of the first $N$
trigonometric functions, i.e. approximate them
by trigonometric subspaces $Q_n=\{\sum_{k\in \Lambda}c_ke(kx)\}$, $|\Lambda|=n$.
\begin{theorem}
    For any $p\in(0,1)$ and for sufficiently large $N$ we have
    $$
    d_n^T(\{e(kx)\}_{k=-N}^N,L_p) \le \log^{-c_1(p)}N,\quad\mbox{if $n\ge
    N\exp(-\log^{c_2}N)$},
    $$
    $$
    d_n^T(\{e(kx/N)\}_{k\in \mathbb Z_N}, L_p^N) \le \log^{-c_1(p)}N,\quad\mbox{if $n\ge
    N\exp(-\log^{c_2}N)$}.
    $$
\end{theorem}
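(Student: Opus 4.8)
The plan is to build, for each $N$, a single set $\Lambda\subset\mathbb Z$ of frequencies with $|\Lambda|\le N\exp(-\log^{c_2}N)$ such that every exponential $e(kx)$, $|k|\le N$, lies within $\log^{-c_1(p)}N$ of $\operatorname{span}\{e(\lambda x):\lambda\in\Lambda\}$ in $L_p$. The driving idea is that, just as in Theorem~\ref{thm_walsh_p12} and in the Alman--Williams construction, one should \emph{factor} the Dirichlet-type kernels through a product structure on the frequency group. Concretely, write $N$ roughly as a product of $m$ blocks of size $q$ (so $q^m\asymp N$, hence $m\asymp\log N/\log q$), identify the interval $[-N,N]$ with (a subset of) $\prod_{j=1}^m\{0,\dots,q-1\}$ via base-$q$ digits, and observe that $e(kx)$ then becomes a \emph{tensor product} $\prod_j e(k_j x/q^{j})$ of one-dimensional exponentials. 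A low-rank approximation of the ``full'' tensor $\bigotimes_j(\text{all }q\text{ frequencies})$ — obtained by zeroing out, in each coordinate, the Fourier coefficients of size below a threshold, exactly as one truncates a smooth multiplier — collapses the effective number of surviving frequency-tuples to $o(q^m)=o(N)$ while changing each function only on a set of small measure; in $L_p$ with $p<1$ a change on a set of measure $\eta$ costs only $O(\eta^{1/p-1}\cdot\|\cdot\|_\infty)$ in the quasinorm, which is the source of the $\log^{-c_1}N$ bound once $\eta$ is taken polylogarithmically small.

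The key steps, in order, are: (1) fix the block size $q=\exp(\log^{c_3}N)$ and the number of blocks $m=\log N/\log^{c_3}N$, and set up the digit bijection between $\{-N,\dots,N\}$ and the product group, checking that the map $x\mapsto(x,x/q,\dots,x/q^{m-1})$ turns each target exponential into a genuine rank-one tensor; (2) in each single coordinate, replace the block of $q$ characters by a spanning set of only $q^{1-\delta}$ ``smoothed'' characters at the price of a pointwise error supported on a set of measure $\le q^{-\delta'}$ — this is the one-dimensional Alman--Williams / kernel-truncation lemma and should be quotable from the circle of ideas behind Theorem~\ref{thm_walsh_p12}; (3) tensorize: the product of $m$ such one-dimensional approximations has dimension $(q^{1-\delta})^m = N^{1-\delta}\le N\exp(-\log^{c_2}N)$ after adjusting constants, and by a union bound the total exceptional set has measure $\le m\,q^{-\delta'}=\exp(-\log^{c_4}N)$; (4) convert the ``small exceptional set'' estimate into an $L_p$ bound using $\|f\|_p\le \|f\|_\infty\,\mu(\operatorname{supp} f)^{1/p}$ for $0<p<1$, with $\|e(kx)\|_\infty=1$, giving an error $\le \exp(-\log^{c_4}N/p)\le\log^{-c_1(p)}N$; (5) finally, to pass to a trigonometric subspace (real $\cos/\sin$ rather than complex exponentials) and to the discrete version $\{e(kx/N)\}_{k\in\mathbb Z_N}$ on $L_p^N$, note that the discrete statement is literally the same computation with $\mathbb Z_N$ in place of $\mathbb T$ and the digit factorization of $\mathbb Z_N$ (taking $N=q^m$ exactly), and the real version follows by replacing each $e(\lambda x)$ in $\Lambda$ by the pair $\cos\lambda x,\sin\lambda x$, which only doubles $|\Lambda|$.

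I expect step (3)--(4), the tensorization and its interaction with the $L_p$ quasinorm, to be the main obstacle — not because tensoring dimensions is hard, but because the exceptional sets of the $m$ one-dimensional approximations must be controlled \emph{simultaneously} and the product of $m$ small multiplicative errors must be shown to stay a genuine $L_p$-approximation of a \emph{product} of functions; subadditivity of $\mu(\operatorname{supp}\cdot)$ under products and a telescoping estimate $\|\prod f_j-\prod g_j\|\le\sum\|f_j-g_j\|\prod_{\text{other}}\|\cdot\|_\infty$ (valid because everything is bounded by $1$ in sup norm) will be needed, and one must check that this telescoping does not lose a factor that eats the gap between $N^{1-\delta}$ and $N\exp(-\log^{c_2}N)$. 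A secondary technical point is that $[-N,N]$ is not exactly a product set, so one needs to embed it into $\prod\{0,\dots,q-1\}$ with a mild loss (a constant-factor enlargement of the range, or padding $N$ up to the next power of $q$), which only affects the constants $c_1,c_2$.
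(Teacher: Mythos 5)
Your plan is a genuinely different route from the paper's, and it has two gaps that I think are fatal as written. First, the quantity being bounded is the \emph{trigonometric} width $d_n^T$: the approximating subspace must be spanned by $n$ functions of the trigonometric system itself, i.e.\ by pure harmonics $e(\lambda x)$, $\lambda\in\Lambda$, $|\Lambda|=n$. Your digit/tensor construction controls only the linear dimension (rank) of the span of the approximants; unless each ``smoothed character'' in step (2) has its spectrum confined to a subset of $q^{1-\delta}$ frequencies of its block, the products in step (3) can have spectrum filling essentially all of $\{-N,\dots,N\}$, and a low-dimensional space is of no use for $d_n^T$. If you do insist on the spectral confinement, step (2) becomes a block-sized instance of the very theorem you are proving, so the argument is circular. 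Second, the one-dimensional lemma you hope to quote ``from the circle of ideas behind Theorem~\ref{thm_walsh_p12}'' is not available in the form you need: the Alman--Williams-type constructions (and the paper's own $\LH$/$L_0$ results for characters) control only the \emph{measure} of the set where the approximation fails, with no bound on the size of the interpolation error there, and they live in a different regime (dimension $\exp(\log^{c}N)$, error $\exp(-\log^{c}N)$ in Hamming-type metrics). To convert ``wrong on a set of measure $\eta$'' into an $L_p$ bound you need a sup-norm bound on the approximant on the bad set, which interpolating polynomials do not give; also note $\|f\|_p\le\|f\|_\infty\mu(\supp f)^{1/p}$, not $\eta^{1/p-1}$ as in your opening paragraph.

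The paper's mechanism is different and much more elementary, and it is where $p<1$ really enters: by Lemma~\ref{lem_poly} (normalized Fej\'er kernel) there is $T\in\mathcal T_m$ with $\widehat T(0)=1$ and $\|T\|_p\le C(p)m^{-\delta}$ --- the Ivanov--Yudin phenomenon. By Lemma~\ref{lem_set} (a probabilistic construction) there is a set $\Lambda$ of size about $N^{1-\frac1{2m}}\log^{1/m}N$ such that every $k$ admits a step $h$ with the whole symmetric progression $k\pm lh$, $l=1,\dots,m$, inside $\Lambda$. Then $t(x)=e(kx)T(hx)$ equals $e(kx)+s(x)$ with $\supp\widehat s\subset\Lambda$, so the distance from $e(kx)$ to the span of the harmonics indexed by $\Lambda$ is at most $\|T\|_p\le m^{-\delta}$; taking $m\asymp\log^{1/2}N$ gives both the dimension and error bounds, and the discrete case is identical (with $h$ invertible mod $N$). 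Note how the spectrum constraint is built in from the start, which is exactly what your low-rank approach does not deliver.
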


\paragraph{Organization of the paper.}
In the next section we provide necessary definitions and some needed facts. In
Sections~\ref{sec_l01},~\ref{sec_lp} and~\ref{sec_positive} we prove the results
mentioned in the Introduction as well as some other results. In the last
Section we discuss remaining questions on the subject.

\paragraph{Acknowledgement.}
Author thanks Konstantin Sergeevich Ryutin, Sergei Vladimirobich Astashkin and
Boris Sergeevich Kashin for fruitful discussions.

\section{Preliminaries}

\subsection{Metrics}
\label{subsect_metrics}

Let $(\mathcal X,\mu)$ be a measure space, $\mu(\mathcal X)<\infty$.
As usual, we identify functions (or random variables) that differ on a set of
zero measure.

We will consider classical
$L_p$-spaces for $0<p<\infty$:
$$
\|f\|_p := \|f\|_{L_p} := \left(\int_{\mathcal X}|f|^p\,d\mu\right)^{1/p}.
$$

The space $L_0(\mathcal X,\mu)$ consists of all measurable functions. The convergence in measure
in $L_0$ may be metricised by several metrics. Let us describe the Ky-Fan metric
that we will use. Denote
$$
\|f\|_{L_0} := \sup\{\eps\ge0\colon \mu\{x\colon |f(x)|\ge\eps\}\ge\eps\}.
$$
Then $\|f-g\|_{L_0}$ is indeed a metric.
Also we use the following notation:
$$
\|f\|_{\LH} := \mu \{x\colon f(x)\ne0\}.
$$
Note that $\|f\|_p \ge \|f\|_{L_0}^{1+1/p}$ and $\|f\|_{\LH}\ge\|f\|_{L_0}$.

Let $N\in\mathbb N$. Then the space $\R^N$ is equipped with the usual norms
$\ell_p^N$ that correspond to the counting measure on $\{1,\ldots,N\}$:
$$
\|x\|_p := \|x\|_{\ell_p^N} := (\sum_{k=1}^N |x_k|^p)^{1/p}.
$$
We denote the unit ball in $\ell_p^N$ as $B_p^N$.
Note that the meaning of $\|\cdot\|_p$ is clear from the context: it is either
the $\|\cdot\|_{L_p}$ norm if the argument is a function and $\|\cdot\|_{\ell_p^N}$
norm if the argument is a vector. In the Euclid case we write $|\cdot|$.

By $L_p^N$ we denote the $L_p$-spaces for normalized (probabilistic) measure on
$\{1,\ldots,N\}$. Obviously, $\|x\|_{L_p^N} = N^{-1/p}\|x\|_{\ell_p^N}$,
$0<p\le\infty$.

The quantity $\|x\|_{\LH^N}$ is the share of nonzero coordinates
of a vector and the corresponding distance is the normalized
Hamming distance.
Note that $\lim\limits_{p\to+0}\|x\|_p^p = \#\{k\colon x_k\ne0\} = N\|x\|_{\LH^N}$.

In the case $p=0$ we do not work with $\ell_N^0$, but only with $L_0^N$:
$$
\|x\|_{L_0^N} = \max\{\eps\colon \#\{i\colon |x_i|\ge \eps\} \ge \eps N\}.
$$
We avoid the notation $\|\cdot\|_0$.

\subsection{Widths}
\label{subsec_avg_widths}

Let $X$ be a linear space equipped with a metric (or more general distance
functional) $d_X$. Recall that the Kolmogorov width of a set $K\subset X$ of order
$n\in\mathbb Z_+$ is defined as
$$
d_n(K,X) := \inf_{Q_n\subset X} \sup_{x\in K}\rho(x,Q_n)_X,
$$
where $\rho(x,Q)_X := \inf_{y\in Q} d_X(x,y)$ and the infimum is taken over
linear subspaces of $X$ of dimension at most $n$. Usually $X$ is a normed space
and $d_X(x,y)=\|x-y\|_X$; note that in the original Kolmogorov's paper this is
not required and we will indeed use widths in more general context.

\paragraph{Average widths.}
Let $X$ be a linear space with a metric, $\mu$~--- a Borel measure on $X$. For $p\in(0,\infty)$ we define the
$p$-averaged Kolmogorov width of $\mu$ in $X$ as
$$
d_n^{\mathrm{avg}}(\mu,X)_p := \inf_{Q_n\subset X} (\int_X \rho(x,Q_n)_X^p \,d\mu(x))^{1/p}.
$$
If $p=1$, we write simply $d_n^{\mathrm{avg}}(\mu,X)$.

From now on we will assume that $\mu$ is a probability measure. Equivalently, we
may consider the widths of a (Borel) random vector $\xi$ with values in $X$:
$$
d^{\mathrm{avg}}_n(\xi, X)_p := \inf_{Q_n\subset X} (\E \rho(\xi,Q_n)_X^p)^{1/p}.
$$
If $K$ is a subset in $X$ and it is clear, what is meant by a ``random point in
$K$'', then the width $d_n^{\mathrm{avg}}(K,X)_p$ is defined as a width of that
random point. E.g. for a finite $K$ we have
$$
d^{\mathrm{avg}}_n(\{x_1,\ldots,x_N\}, X) := \inf_{Q_n\subset X} \frac1N
\sum_{i=1}^N \rho(x_i,Q_n)_X.
$$

\paragraph{Notation.}
Let us discuss the new notation $d_n^{\mathrm{avg}}(\xi,X)_p$.
Some authors write $d_n^{(a)}$ instead of $d_n^{\mathrm{avg}}$. This may be confusing
because $d_n^{a}$ stands for \textit{absolute} widths (see~\cite{OO13}).
Also the usage of $\mathrm{avg}$ is standard
in average settings in Information-based complexity, see~\cite{NWBook}.
Some authors write $d_n^{(a)}(X,\mu)$, but
when we deal with sets and random variables the notation
$d_n^{\mathrm{avg}}(\xi,X)$ seems to be more intuitive (the object which width
is considered is written in the first place).


\paragraph{Properties.}
The most basic inequality that we always have in mind is
$$
d_n(K,X) \ge d_n^\avg(\xi,X)_p,\quad\mbox{if $\P(\xi\in K)=1$.}
$$
We will use the following inequality, which is a simple consequence of the convexity
of the $L_p$-norm:
\begin{equation}
    \label{sum_width}
d_{n_1+n_2}^{\mathrm{avg}}(\xi_1+\xi_2,X)_p \le
d_{n_1}^{\mathrm{avg}}(\xi_1,X)_p +
d_{n_2}^{\mathrm{avg}}(\xi_2,X)_p,\quad 1\le p<\infty.
\end{equation}

\begin{statement}
    \label{stm_equiv}
    Let $\xi_1,\ldots,\xi_N\colon\Omega\to\R$ be some random variables with
    $\E|\xi_i|^p<\infty$ for all $i$.
    Then for $p\in[1,\infty)$ and $n\in\mathbb Z_+$ we have
    \begin{equation}
        \label{avg_equiv}
    d_n^{\mathrm{avg}}(\{\xi_1,\ldots,\xi_N\},L_p(\Omega))_p =
    N^{-1/p}d_n^{\mathrm{avg}}((\xi_1,\ldots,\xi_N),\ell_p^N)_p.
    \end{equation}
\end{statement}

Note that in the width on the left side the averaging is over finite set of
``points'' $\xi_1,\ldots,\xi_N$ in the space $L_p(\Omega)$; and on the right side
we see the averaged width of a random vector $\xi=(\xi_1,\ldots,\xi_N)$ in $\R^N$.
In the case of discrete $\Omega$ the equality~\eqref{avg_equiv} is a simple consequence of the equality of
the row-rank and the column-rank of a matrix. Let us give the full proof for
completeness.

\begin{proof}
    Consider the error of an approximation $\xi_k\approx \eta_k$:
    \begin{equation}
        \label{xi_eta_transpose}
    \sum_{k=1}^N \|\xi_k-\eta_k\|_{L_p(\Omega)}^p = \sum_{k=1}^N
    \E|\xi_k-\eta_k|^p = \E \|\xi-\eta\|_{\ell_p^N}^p.
    \end{equation}
    Note that random variables $\eta_1,\ldots,\eta_N$ lie in a subspace of
    $L_p(\Omega)$ of dimension at most $n$ if and only if
    there is a subspace $Q\subset\R^N$, $\dim Q\le n$, such that
    $$
    \eta = (\eta_1,\ldots,\eta_N)\in Q\quad \mbox{almost surely}.
    $$
    Therefore,
    $$
    \E \|\xi-\eta\|_{\ell_p^N}^p \ge \E\rho(\xi,Q)_{\ell_p^N}^p \ge
    d_n^{\mathrm{avg}}(\xi,\ell_p^N)_p^p.
    $$
    This gives us the ``$\ge$'' inequality in~\eqref{avg_equiv}.

    To prove the reverse inequality, we have to construct $\eta$ from an
    (almost) optimal subspace $Q\subset\R^N$. Note that for any $\eps>0$ there
    is a Borel-measurble (e.g., piecewise-constant) map $\pi\colon\R^N\to Q$, such that
    $$
    \|x-\pi(x)\|_{\ell_p^N} \le \rho(x,Q)_{\ell_p^N} + \eps,\quad x\in\R^N.
    $$
    Take $\eta := \pi(\xi)$; this is a random vector because $\pi$ is
    Borel. Moreover, $\eta\in Q$ almost surely, so $\eta_1,\ldots,\eta_N$ span at most
    $n$-dimensional space in $L_p(\Omega)$.
    It remains to substitute $x=\xi$ in the previous
    inequality, take $p$-expectation and use~\eqref{xi_eta_transpose} to finish the proof.
\end{proof}

\paragraph{Average width in Hilbert space.}

We will give a formula for 2-averaged width of a random vector in Hilber space.
This formula is a generalization of Eckart--Young theorem on matrix
approximation in Frobenius norm. Also, it is a reformulation of Ismagilov's
theorem~\cite{I68}. However, our terminology is very convenient so this
formulation makes sense. Our proof repeats Ismagilov's and is given for
completeness.

Let $H$ be a separable Hilber space (of finite or infinite dimension), $\xi$ a
random vector in $H$ such that $\E|\xi|^2<\infty$. Consider the correlation
operator of $\xi$ defined by $\langle Au,v\rangle =
\E\langle\xi,u\rangle\langle\xi,v\rangle$. It is known that $A$ is symmetric
nonnegative compact operator (see, e.g.~\cite[Ch.3]{VCT}). The sequence
$(\lambda_n)$ is the sequence of eigenvalues of $A$ if and only if there is an
orthonormal basis $\{\varphi_n\}$ (eigenbasis of $A$) such that
\begin{equation}
    \label{bazis_H}
\E\langle\xi,\varphi_k\rangle^2=\lambda_k,\quad
\E\langle\xi,\varphi_k\rangle\langle\xi,\varphi_l\rangle=0\;\mbox{for $k\ne l$}
\end{equation}
Let us consider the eigenvalues in the decreasing order:
$\lambda_1\ge\lambda_2\ge\ldots\ge0$.

\begin{statement}
    We have $d_n^\avg(\xi,H)_2 = (\sum\limits_{k>n}\lambda_k)^{1/2}$.
\end{statement}

\begin{proof}
    Fix a basis~\eqref{bazis_H} and put $\xi_k:=\langle\xi,\varphi_k\rangle$.
    Let us bound the width from below; consider some approximating space $Q_n$
    with orthonormal basis $\{\psi_1,\ldots,\psi_n\}$. Then
    $$
    \E\rho(\xi,Q_n)^2 = \E|\xi|^2 - \E|P_{Q_n}\xi|^2.
    $$
    We have $\E|\xi|^2=\sum_{k\ge1}\E\xi_k^2=\sum_{k\ge
    1}\lambda_k$ so it is enough to give an upper bound on the projection:
    \begin{multline*}
    \E|P_{Q_n}\xi|^2 = \sum_{j=1}^n \E\langle \xi,\psi_j\rangle^2 =
    \sum_{j=1}^n\E (\sum_{k\ge 1}\xi_k\langle \psi_j,\varphi_k\rangle)^2 = \\
    = \sum_{j=1}^n \sum_{k\ge1} \lambda_k \langle \psi_j,\varphi_k\rangle^2 = \sum_{k\ge
        1}\lambda_k \mu_k,\quad\mbox{where $\mu_k := \sum\limits_{j=1}^n
        \langle\psi_j,\varphi_k\rangle^2$}.
    \end{multline*}
    Note that numbers $(\mu_k)$ belong to $[0,1]$ and their sum equals $n$.
    Due to monotonicity of $\lambda_n$ the sum $\sum_{k\ge1}\lambda_k\mu_k$ is
    maximal for $\mu_k = 1$, $k\le n$, and $\mu_k=0$, $k>n$. We obtained the
    lower bound for the width. The upper bound is attained at the subspace
    $Q_n = \mathrm{span}\,\{\varphi_1,\ldots,\varphi_n\}$.
\end{proof}

It is often convenient to work with singular values $\sigma_k:=\lambda_k^{1/2}$
instead of eigenvalues. One can rewrite~\eqref{bazis_H} in terms of Schmidt
decomposition:
$$
\xi = \sum_{k\ge 1}\sigma_k \eta_k \varphi_k,
\quad \E\eta_k\eta_l=\delta_{k,l},
\quad \langle\varphi_k,\varphi_l\rangle=\delta_{k,l}.
$$
Then $d_n^\avg(\xi,H)_2 = (\sum_{k>n}\sigma_k^2)^{1/2}$.

We come to Eckart--Young theorem if we consider finite sets. Let matrix $X$ consist of
columns $x^1,\ldots,x^N\in\R^M$, then
$$
d_n^\avg(\{x^1,\ldots,x^N\},\ell_2^M)_2 =
N^{-1/2}(\sum_{k>n}\sigma_k(X)^2)^{1/2}.
$$
Indeed, singular value decomposition of $X$ gives a basis
$\{\varphi_k\}_{k=1}^M$, such that coordinate vectors $(\langle
x^1,\varphi_k\rangle,\ldots,\langle x^N,\varphi_k\rangle)$ are orthogonal and
have length $\sigma_k(X)$. Given $\xi$~--- a random choice of  $x^i$, we have
$\E\langle\xi,\varphi_k\rangle^2=N^{-1}\sigma_k^2(X)$,
$\E\langle\xi,\varphi_k\rangle\langle\xi,\varphi_l\rangle=0$, hence $\sigma_k(\xi) =
N^{-1/2}\sigma_k(X)$.

Finally, an orthonormal system
$\varphi_1,\ldots,\varphi_N$ in Euclid space
$E$ corresponds to the identity matrix $X$ (in appropriate coordinates), so we
have (see also~\eqref{l2width}):
$$
d_n(\{\varphi_1,\ldots,\varphi_N\},E) =
d_n^\avg(\{\varphi_1,\ldots,\varphi_N\},E)_2 = \sqrt{1-n/N}.
$$

\paragraph{Trigonometric width.}
Let $\Phi$ be a subset of $X$. If we approximate by subspaces spanned by
elements of $\Phi$, we obtain the notion of $\Phi$-width
$$
d_n^\Phi(K,X) := \inf_{\varphi_1,\ldots,\varphi_n\in\Phi}\sup_{x\in
K}\rho(x,\mathrm{span}\,\{\varphi_i\}_{i=1}^n)_X.
$$
However we will need only the case when $\Phi$ is a trigonometric system
$\{e(kx)\}_{k\in\mathbb Z}$ in the continuous case or
$\{e(kx/N)\}_{k\in\mathbb Z_N}$ in the discrete case. In this situation the
width is called the trigonometric width and is denoted as $d_n^T$.

Usually we consider real spaces (and the linear dimension over $\mathbb R$).
When complex-valued functions are approximated one can consider also the linear
spaces over $\mathbb C$. The distinction here will be not important for us
because in all cases where complex--valued functions are approximated the
precise value of the dimension $n$ plays no role.

\subsection{Miscellaneous}

The material from the following two paragraphs is contained in the book~\cite[\S2.2,\,2.8,\,8.8]{HDPBook}.

\paragraph{Probability.}
We use probabilistic notations $\P$ for the Probability, $\E$ for the
Expectation, $\Law$ for the distribution. We also use the conditional expectation $\E(\xi|\eta)$; however,
only for $\eta$ with finite number of values.

We will make use of standard tail estimates.
Let $X_i$ be independent r.v., $\E X_i=0$.

\textit{Hoeffding's inequality}. Let $a_i\le X_i\le b_i$ a.s., then
$$
\P(|\sum_{i=1}^k X_i| \ge \lambda \sigma) \le
2\exp(-2\lambda^2),\quad \sigma^2 := \sum_{i=1}^k (b_i-a_i)^2.
$$

\textit{Bernstein's inequality}. Let $|X_i|\le M$ a.s., then
$$
\P(|\sum_{i=1}^k X_i| \ge t) \le
2\exp\left(-\frac{\frac12 t^2}{\sigma^2+\frac13Mt}\right),\quad \sigma^2 := 
\sum_{i=1}^n \E X_i^2.
$$

\paragraph{$\vc$-dimension.}
Let us recall the definition of the \textit{combinatorial dimension}
of some class $\mathcal F$ of functions $f\colon I\to\R$. We say
that a set $J\subset I$ is $t$-shattered ($t>0$) by $\mathcal F$, if there is a function $h\colon
J\to\R$, such that for any choice of signs $\sigma\colon J\to\{-1,1\}$ one can
find a function $f_\sigma\in\mathcal F$ with
\begin{equation}
    \label{shatter}
\min_{x\in J}\sigma(x)(f_\sigma(x)-h(x))\ge t.
\end{equation}
Then the combinatorial dimension $\vc(\mathcal F,t)$ is the maximal cardinality
of a set $t$-shattered by $\mathcal F$.

If $\mathcal F$ is convex and centrally-symmetic then one can assume that
$h(x)\equiv 0$. Indeed, we can replace $\{f_\sigma\}$ by
$\{\frac12(f_\sigma-f_{-\sigma})\}$.

We will use a simple (and known) fact that
\begin{equation}
    \label{vc_width}
\mbox{if\;\;}\vc(\mathcal F,t)> n,\quad\mbox{then\;\;}d_n(\mathcal F,\ell_\infty(I))\ge t.
\end{equation}
Indeed, let $J$ be a $t$-shattered set, $|J|=n+1$, and suppose that there is an
approximation by an $n$-dimensional subspace:
$\mathcal F\ni f\approx g$, $g\in Q_n$, $\|f-g\|_\infty < t$.
The exists a nontrivial linear functional
$\Lambda(f) := \sum_{x\in J}\lambda(x)f(x)$, such that $\Lambda(g)\equiv 0$ for
$g\in Q_n$.
Assume that $\Lambda(h)\ge 0$, where $h$ is from~\eqref{shatter}. Take a
function $f$ such that
$\sign(\lambda(x))(f(x)-h(x))\ge t$ for all $\{x\in J\colon \lambda(x)\ne0\}$.
But for the approximating function we have $\sign(\lambda(x))(g(x)-h(x))>0$,
hence $\Lambda(g)=\Lambda(g-h)+\Lambda(h)>0$. We get a contradiction. The case $\Lambda(h)\le0$
is analogous.

We will also
make use of the classical VC bound for $\mathcal F\colon I\to\{-1,1\}^k$:
\begin{equation}
    \label{vc_std}
|\mathcal F|>\binom{k}{0}+\binom{k}{1}+\ldots+\binom{k}{d}
\quad\Longrightarrow\quad \vc(\mathcal F,1) > d.
\end{equation}
There is a useful bound on binomial coefficients:
\begin{equation}
    \label{binom}
\binom{k}{0}+\ldots+\binom{k}m \le (ek/m)^m,\quad 0\le m \le k.
\end{equation}
Note that the function $(ek/x)^x$ is increasing for $x\in(0,k]$.

We will ofter consider $1$-periodic functions, i.e. functions on $\mathbb
T:=\mathbb R/\mathbb Z$. We also use the notation $e(\theta):=\exp(2\pi
i\theta)$.

By $c,c_1,c_2,C,\ldots$ we denote positive constants. If there is a dependence
on some parameters, we specify them explicitly: $c_p,c(\eps),\ldots$.

\section{Sufficient conditions for rigidity in weak metrics}
\label{sec_l01}

\subsection{Rigidity in $L_1$}

For definiteness we put $\sign(x)=1$ for $x\ge 0$ and $\sign(x)=-1$ for
$x<0$.

\begin{theorem}
    \label{th_l1}
    Let $\xi=(\xi_1,\ldots,\xi_N)$ be a random vector in $\R^N$ having
    $\E|\xi_i|\ge1$ for all $i$. Let $\eps\in(0,1)$ and $n\le N(1-\eps)$. Then
    \begin{equation}
        \label{l1rig}
        d_n^{\mathrm{avg}}(\xi,\ell_1^N) \ge
        c(\eps)N - \sum_{i=1}^N\|\E(\xi_i|\{\sign\xi_j\}_{j\ne i})\|_{L_1}.
    \end{equation}
\end{theorem}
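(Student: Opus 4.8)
The plan is to fix, once and for all, an $n$-dimensional subspace $Q_n\subset\R^N$ with $n\le N(1-\eps)$, so that $m:=\dim Q_n^{\perp}\ge\eps N$, to bound $\E\rho(\xi,Q_n)_{\ell_1^N}$ from below for this $Q_n$, and then take the infimum over $Q_n$. Since $(\ell_1^N)^{\ast}=\ell_\infty^N$, the distance to a subspace has the dual description
$$
\rho(\xi,Q_n)_{\ell_1^N}=\sup\{\langle\xi,\phi\rangle:\ \phi\in Q_n^{\perp},\ \|\phi\|_\infty\le1\},
$$
so it is enough to produce a measurable map $\xi\mapsto\phi(\xi)\in Q_n^{\perp}\cap B_\infty^N$ with $\E\langle\xi,\phi(\xi)\rangle$ large. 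The ``ideal'' choice is $\phi=\sign\xi$, for which $\langle\xi,\sign\xi\rangle=\|\xi\|_1$ and $\E\|\xi\|_1\ge N$; the difficulty is that $\sign\xi$ need not lie in $Q_n^{\perp}$ and must be repaired to a feasible vector, at the cost of a constant factor and of the ``non-centered'' part of $\xi$.

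First I would split off the conditional means. Put $h_i:=\E(\xi_i\mid\{\sign\xi_j\}_{j\ne i})$ and $\eta:=\xi-h$, so that $\E(\eta_i\mid\{\sign\xi_j\}_{j\ne i})=0$ and in particular $\E\eta_i=0$. Applying the subadditivity \eqref{sum_width} to $\eta=\xi+(-h)$ with the second summand approximated by $\{0\}$ gives
$$
d^{\avg}_n(\eta,\ell_1^N)\le d^{\avg}_n(\xi,\ell_1^N)+d^{\avg}_0(-h,\ell_1^N)=d^{\avg}_n(\xi,\ell_1^N)+\sum_{i=1}^N\|h_i\|_{L_1},
$$
which is exactly \eqref{l1rig} once one knows $(\ast)$: $d^{\avg}_n(\eta,\ell_1^N)\ge c(\eps)N$. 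One may moreover assume $\sum_i\|h_i\|_{L_1}<c(\eps)N$, as otherwise \eqref{l1rig} is vacuous; then $\E|\eta_i|\ge1-\|h_i\|_{L_1}\ge\tfrac12$ for all but an $O(\eps)$-fraction of the indices, and for these ``good'' coordinates $\E\eta_i=0$ together with $\min_{c\in\R}\E|\eta_i-c|\ge\tfrac12\E|\eta_i|$ gives $\min_c\E|\eta_i-c|\ge\tfrac14$. The point of the reduction is that $(\ast)$ should only use the structural facts ``most coordinates of $\eta$ are conditionally centered with $\E|\eta_i|\gtrsim1$''.

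For $(\ast)$ I would again use duality and build $\phi$ from the signs: writing $\phi=(\sign\xi)\odot c$ turns it into the choice of $c\in((\sign\xi)\odot Q_n^{\perp})\cap B_\infty^N$ with $\E\langle|\eta|,c\rangle$ large, and since $|\eta|\ge0$ the contribution of a coordinate whose $c_i$ does not depend on $\sign\xi_i$ is, after integrating that sign out, $\E[c_i\,\E(|\eta_i|\mid\{\sign\xi_j\}_{j\ne i})]$, of order $\E|\eta_i|$ on the good coordinates where $c$ ``points the right way''; it is precisely this bookkeeping that keeps the error at $\sum_i\|h_i\|_{L_1}$ rather than at some larger, magnitude-sensitive quantity, and it is why the conditioning is on $\{\sign\xi_j\}_{j\ne i}$. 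Thus $(\ast)$ amounts to: for every $m$-dimensional $T=Q_n^{\perp}\subset\R^N$ and a typical sign vector $\sigma$ there is $\phi\in T\cap B_\infty^N$ with $\langle\sigma,\phi\rangle\gtrsim_\eps m$, i.e.\ $\E_\sigma\rho(\sigma,Q_n)_{\ell_1^N}\gtrsim_\eps m$ uniformly in $Q_n$; a natural attempt uses the orthogonal projection $\Pi$ onto $T$ ($\E|\Pi\sigma|^2=\tr\Pi=m$, while by Hoeffding each $(\Pi\sigma)_i=\langle\Pi e_i,\sigma\rangle$ is $O(1)$ with high probability since $|\Pi e_i|\le1$), so clipping $\Pi\sigma$ into the cube costs little in $\ell_1$ and re-projecting onto $T$ returns a feasible $\phi$. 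The step I expect to be the main obstacle is making this last estimate \emph{uniform over all $Q_n$}: showing that every $m$-dimensional section $T\cap B_\infty^N$ of the cube has mean width of order $m$, and not merely of order $m/\sqrt{\log N}$, which is all the bare $MM^{\ast}$-estimate or the naive $\phi=\Pi\sigma/\|\Pi\sigma\|_\infty$ give. Removing that logarithm seems to require exploiting the special $\ell_1$-geometry of cube sections — e.g.\ representing $Q_n$ as a graph over a well-chosen $n$-subset of coordinates and running a leave-one-out/anticoncentration argument on the complementary $m$ coordinates — rather than a generic convex-geometry bound.
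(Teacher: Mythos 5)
Your proposal follows the same general frame as the paper's proof (dualize, so that everything happens in the cube section $K=B_\infty^N\cap Q_n^\perp$; build dual certificates out of sign patterns; let the conditional means absorb the error), but the quantitative heart is missing. You reduce $(\ast)$ to the claim that every section of the cube of dimension $m\ge\eps N$ is ``large'' against sign vectors uniformly in $Q_n$, you correctly observe that the naive route (project a sign vector onto $T=Q_n^\perp$, clip into the cube) only yields order $m/\sqrt{\log N}$, and you leave the removal of the logarithm as a hoped-for step, with only speculative suggestions. That step is precisely the crux, and the paper closes it with Lemma~\ref{lem_cube_section}: any central section $K=B_\infty^N\cap L_m$ with $m\ge\eps N$ $c_1(\eps)$-shatters a coordinate set $\Lambda$ with $|\Lambda|\ge c_2(\eps)N$. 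This is obtained by combining Vaaler's theorem (Theorem~\ref{ext_vaaler}, giving $\Vol_m(K)\ge 2^m$), a volumetric lower bound on covering numbers of $K$ in $L_2^N$, and the Mendelson--Vershynin inequality \eqref{mv_bound} converting entropy into combinatorial dimension. Without this (or an equivalent) geometric input, no choice of constants $c(\eps)$ comes out of your argument, so as written the proof is incomplete at its main point.

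There is also a structural problem with how you average over signs. Your formulation of $(\ast)$, ``$\E_\sigma\rho(\sigma,Q_n)_{\ell_1^N}\gtrsim_\eps m$ for a typical sign vector,'' and the computation $\E|\Pi\sigma|^2=\tr\Pi$ with Hoeffding, implicitly treat $\sigma$ as a uniform Rademacher vector. But the theorem assumes nothing about the law of $\sign\xi$ (nor of $\sign\eta$ after your centering step \eqref{sum_width}, which is legitimate but does not make the signs independent or uniform); that law may be concentrated on very few patterns. What the proof actually needs is a worst-case statement over sign patterns on a fixed large coordinate set, and this is exactly what the shattering provides: for \emph{every} $\sigma^*\colon\Lambda\to\{-1,1\}$ there is $z^{\sigma^*}\in K$ with $\sigma^*_iz^{\sigma^*}_i\ge v\ge c(\eps)$ on $\Lambda$. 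One then conditions on $\{\sign\xi_i=\sigma^*_i,\ i\in\Lambda\}$ whatever its probability, collects $v\sum_{i\in\Lambda}\E^*|\xi_i|$ from the aligned coordinates (using only $\|z^{\sigma^*}\|_\infty\le1$ off $\Lambda$), pays $\sum_{i\notin\Lambda}|\E^*\xi_i|$, and after averaging uses the $L_1$-contraction $\|\E(\xi_i|\{\sign\xi_j\}_{j\in\Lambda})\|_{L_1}\le\|\E(\xi_i|\{\sign\xi_j\}_{j\ne i})\|_{L_1}$ to land exactly on the error term of \eqref{l1rig}; note this also bypasses your reduction to $\eta$. Finally, a minor but real slip: with $\phi=(\sign\xi)\odot c$ you get $\sum_i\eta_i\sign(\xi_i)c_i$, not $\langle|\eta|,c\rangle$, since $\sign\eta_i\ne\sign\xi_i$ in general; the bookkeeping you sketch around ``coordinates whose $c_i$ does not depend on $\sign\xi_i$'' is the right instinct (it is the paper's conditioning step), but it is not carried out.
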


Before proving this Theorem let us note that the distance in $\ell_1^N$ from a
vector $x\in\R^N$ to a subspace $Q_n\subset\R^N$ may be written in dual terms as
$$
\rho(x,Q_n)_1 = \sup_{z\in K} \langle x,z\rangle,
\quad K := B_\infty^N \cap Q_n^\perp.
$$
So, the central sections of the cube $B_\infty^N$ play an important role here. We will
prove the following Lemma.
\begin{lemma}
    \label{lem_cube_section}
    Let $K$ be an $m$-dimensional central section of the cube $B_\infty^N$, and $m\ge \eps
    N$. Then $\vc(K,c_1(\eps)) \ge c_2(\eps)N$.
\end{lemma}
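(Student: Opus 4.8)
The plan is to produce a large subset $J \subseteq \{1,\dots,N\}$ on which $K$ "looks like a cube" in a quantitative sense, so that the shattering condition can be verified coordinate-wise with $h \equiv 0$ (legitimate since $K$ is convex and centrally symmetric). Concretely, I want to find $J$ with $|J| \ge c_2(\eps) N$ and a real $t = c_1(\eps) > 0$ such that for every sign pattern $\sigma \colon J \to \{-1,1\}$ there is a point $z \in K$ with $\sigma(j) z_j \ge t$ for all $j \in J$. The natural candidate for $z$ is a suitable rescaling of a vertex-like element: pick $\delta \in \{-1,1\}^N$ extending $\sigma$ (arbitrary on $\{1,\dots,N\}\setminus J$), and try to show that the orthogonal projection of $\delta$ onto the subspace defining $K$ (i.e.\ onto $(Q_n^\perp)$, using that $K = B_\infty^N \cap Q_n^\perp$) stays inside $B_\infty^N$ and has coordinates on $J$ bounded below by a constant times $\delta_j$.

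The key linear-algebra input is that $K$ is an $m$-dimensional central section with $m \ge \eps N$, so the subspace $V := Q_n^\perp$ has dimension $m \ge \eps N$ sitting inside $\R^N$. I would like to select $J$ to be a set of coordinates on which the coordinate projection $V \to \R^J$ is "well-conditioned": there should exist $J$ with $|J| \ge c_2(\eps)N$ such that restriction to coordinates in $J$, composed with a right inverse back into $V$, is bounded in $\ell_\infty \to \ell_\infty$ operator norm by some $M(\eps)$. Given such a $J$, for any sign pattern $\sigma$ on $J$ I take the vector $\sigma \in \{-1,1\}^J$, lift it to $v \in V$ with $v|_J = \sigma$ and $\|v\|_\infty \le M(\eps)$, then set $z := v / M(\eps) \in B_\infty^N \cap V = K$; this gives $\sigma(j) z_j = 1/M(\eps)$ for $j \in J$, so $t = 1/M(\eps)$ works. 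Thus everything reduces to the coordinate-selection claim about $V$.

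For that selection I would argue by a volumetric / John-position or restricted-invertibility style argument. Since $\dim V \ge \eps N$, the section $K = B_\infty^N \cap V$ is a symmetric convex body in $V$ that cannot be too "thin": by the Vaaler-type lower bound on volumes of central sections of the cube (or equivalently by a direct dimension count), $K$ contains a point with at least $c(\eps)N$ coordinates of absolute value $\ge c(\eps)$. Iterating this — or, more cleanly, applying the Bourgain–Tzafriri restricted invertibility theorem to the natural surjection $\R^N \to V$ given by orthogonal projection — yields a coordinate subset $J$, $|J| \ge c_2(\eps)N$, on which the projection is invertible with controlled norm, which is exactly the $\ell_\infty$-conditioning statement above (after passing between $\ell_2$ and $\ell_\infty$ at the cost of constants depending only on $\eps$, using $|J|,m \asymp_\eps N$).

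The main obstacle I anticipate is getting the $\ell_\infty \to \ell_\infty$ bound directly; restricted invertibility naturally gives $\ell_2$ control, and converting to $\ell_\infty$ on a set of coordinates of proportional size is where one must be careful not to lose a factor depending on $N$. The cleanest route is probably to avoid restricted invertibility entirely and instead argue as follows: if no constant-fraction coordinate subset were shatterable at level $c_1(\eps)$, then every $z \in K$ would have all but $o(N)$ coordinates of size $< c_1(\eps)$, forcing $K$ to lie inside a slab-like body whose intersection with any $\eps N$-dimensional subspace has volume incompatible with Vaaler's inequality $\Vol_m(B_\infty^N \cap V) \ge 2^m$. Pushing this contradiction through — choosing the thresholds $c_1, c_2$ so the volume estimates genuinely clash — is the technical heart, but it is a finite-dimensional convex-geometry computation with all constants depending only on $\eps$, so it closes the proof.
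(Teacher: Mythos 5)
There is a genuine gap in both routes you sketch. Your first route reduces the lemma to finding a proportional-size set $J$ of coordinates on which \emph{every} sign pattern lifts to a vector of $V=Q_n^\perp$ with $\ell_\infty$-norm at most a constant $M(\eps)$; but this is essentially a restatement of the lemma itself, and the tool you invoke, restricted invertibility, only provides an $\ell_2$-bounded right inverse: for $\sigma\in\{-1,1\}^J$ it gives $v\in V$ with $v|_J=\sigma$ and $\|v\|_2\le C|J|^{1/2}$, hence only $\|v\|_\infty\lesssim \sqrt N$, and after rescaling into $B_\infty^N$ the shattering level degenerates to order $N^{-1/2}$ instead of a constant $c_1(\eps)$. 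You acknowledge this obstacle, but no way around it is supplied, and none is routine.

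Your fallback contradiction argument rests on the implication ``if no coordinate set of proportional size is shattered at level $c_1(\eps)$, then every $z\in K$ has all but $o(N)$ coordinates of modulus $<c_1(\eps)$''. That implication is false: smallness of $\vc(K,t)$ constrains which sign patterns are realized \emph{collectively} by the family $K$, not the shape of individual vectors. For instance the diagonal segment $K_0=\{s(1,\dots,1)\colon |s|\le1\}\subset B_\infty^N$ satisfies $\vc(K_0,t)\le 1$ for all $t>0$, yet contains a point with all $N$ coordinates equal to $1$; so the claimed ``slab'' containment of $K$, and with it the intended volume clash with Vaaler's inequality, does not get off the ground. What is needed is a quantitative bridge from volume to combinatorial dimension, and this is exactly what the paper uses: Vaaler's theorem gives $\Vol_m(K)\ge 2^m$, a volume comparison then gives many well-separated points, $\log N_r(K,\ell_2^N)\ge m\log 2$ at scale $r\asymp\sqrt m$, and the Mendelson--Vershynin inequality $\log N_t(K,L_2(\mu))\le C\,\vc(K,ct)\log(2/t)$, applied in $L_2^N$ at $t\asymp\eps^{1/2}$, converts this entropy lower bound into $\vc(K,c_1(\eps))\ge c_2(\eps)N$. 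Some such entropy-to-dimension step (or an argument of comparable strength) is the missing heart of your proposal.
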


The proof of this Lemma relies on two useful results.
\begin{exttheorem}[\cite{V79}]
    \label{ext_vaaler}
    Any $m$-dimensional section of a volume one cube has volume at least
    one: $\Vol_m(\frac12 B_\infty^N \cap L_m)\ge 1$.
\end{exttheorem}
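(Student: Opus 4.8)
The plan is to derive the combinatorial dimension bound from a volume bound: Vaaler's theorem gives a lower bound for the $m$-volume of $K$, this converts into a lower bound for the $\ell_\infty^N$-covering (packing) numbers of $K$, and then a known estimate bounding covering numbers of subsets of the cube in terms of their combinatorial dimension closes the argument. (This covering estimate is presumably the second of the ``two useful results'' the proof rests on.) Throughout, write $K=B_\infty^N\cap L$ where $L$ is an $m$-dimensional linear subspace and $m\ge\eps N$; we must produce $c_1(\eps),c_2(\eps)>0$ and a coordinate set $J$ with $|J|\ge c_2(\eps)N$ which is $c_1(\eps)$-shattered by $K$.

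\emph{Step 1 (volume).} Since $L$ is linear, $K=2\bigl(\tfrac12 B_\infty^N\cap L\bigr)$, so Theorem~\ref{ext_vaaler} gives $\Vol_m(K)=2^m\Vol_m(\tfrac12 B_\infty^N\cap L)\ge 2^m$.

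\emph{Step 2 (packing).} Fix a scale $t\in(0,1)$ to be chosen as a function of $\eps$, and cover $K$ by $N(t):=N(K,t,\ell_\infty^N)$ translates of $tB_\infty^N$. The intersection of $K$ with one such translate lies in an $m$-dimensional affine section of the cube $tB_\infty^N$ (the section by the affine subspace parallel to $L$ through the corresponding center); since $tB_\infty^N$ is contained in the Euclidean ball of radius $t\sqrt N$, that section is contained in an $m$-dimensional Euclidean ball of radius $t\sqrt N$, whose $m$-volume is at most $(Ct\sqrt{N/m})^m\le (Ct/\sqrt\eps)^m$. Hence $2^m\le\Vol_m(K)\le N(t)\,(Ct/\sqrt\eps)^m$, i.e. $N(t)\ge(2\sqrt\eps/(Ct))^m$. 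Choosing $t=t_0(\eps)$ so small that $2\sqrt\eps/(Ct_0)\ge 2$ gives $N(t_0)\ge 2^m\ge 2^{\eps N}$, that is, $\log_2 N(K,t_0,\ell_\infty^N)\ge\eps N$.

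\emph{Step 3 (combinatorial dimension).} Invoke the second ingredient: a (dimension-free) bound of the form $\log_2 N(K,s,\ell_\infty^N)\le C_1\,\vc(K,c_3 s)\log(2/s)$, valid for any $K\subset B_\infty^N$ and $s\in(0,1)$. Applied with $s=t_0(\eps)$ and combined with Step~2 it yields $\vc(K,c_3 t_0(\eps))\ge \eps N/\bigl(C_1\log(2/t_0(\eps))\bigr)$. As $t_0$ depends only on $\eps$, this is exactly the claim with $c_1(\eps):=c_3 t_0(\eps)$ and $c_2(\eps):=\eps/\bigl(C_1\log(2/t_0(\eps))\bigr)$.

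\emph{Main obstacle.} The delicate points are: (i) having the covering-number bound with only a logarithmic and $N$-free dependence on the scale, so that all constants come out depending on $\eps$ alone (a bound with an extra $\log N$ factor would only give $\vc\gtrsim N/\log N$); and (ii) the cube-section volume estimate in Step~2 — the crude Euclidean-ball bound suffices because $m\ge\eps N$, though one could instead use the sharp estimate $\Vol_m(\text{section of }\tfrac12 B_\infty^N)\le 2^{(N-m)/2}$. I would also remark on a softer route that falls just short: choosing $m$ coordinates $J_0$ on which $\pi_{J_0}$ is injective on $L$, the body $\pi_{J_0}(K)$ is symmetric convex with nonempty interior, hence meets all $2^m$ orthants, so $K$ meets at least $2^m\ge 2^{\eps N}$ sign-orthants of $\R^N$; ordinary Sauer--Shelah~\eqref{vc_std} together with~\eqref{binom} then already produces a set of $\ge c_2(\eps)N$ coordinates on which every sign pattern occurs, i.e. a $0$-shattered set. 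Upgrading this margin from $0$ to a positive constant $c_1(\eps)$ is precisely what the volume input (Steps~1--2) is needed for.
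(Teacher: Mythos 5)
There is a fundamental mismatch here: the statement you were asked to prove is Theorem~\ref{ext_vaaler} itself — Vaaler's theorem that every $m$-dimensional central section of the unit-volume cube has $m$-volume at least one, i.e. $\Vol_m(\tfrac12 B_\infty^N\cap L_m)\ge 1$ — but your proposal never proves this. Your Step~1 begins by \emph{invoking} Theorem~\ref{ext_vaaler} to get $\Vol_m(K)\ge 2^m$, and everything after that is devoted to deducing the combinatorial-dimension bound of Lemma~\ref{lem_cube_section} from it. In other words, you have written a proof of the Lemma that \emph{uses} the statement in question as a black box; as a proof of the statement itself the argument is circular and contains no content. Vaaler's theorem is a genuinely nontrivial geometric fact (the paper cites it from~\cite{V79} and does not reprove it); its proof goes through a product-measure / correlation-inequality argument on the cube and cannot be extracted from covering numbers or $\vc$-dimension, which only ever flow in the other direction in this paper. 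Even the weaker bound $\Vol_m(B_\infty^N\cap L_m)\ge c^m$, which the paper notes would suffice for its purposes, requires a separate geometric input that your proposal does not supply.

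As a secondary remark, if your text is read as a proof of Lemma~\ref{lem_cube_section}, it is close to the paper's argument (Vaaler, then a volume-versus-covering comparison, then the Mendelson--Vershynin entropy bound~\eqref{mv_bound}), but with one technical slip: you derive a lower bound on the $\ell_\infty^N$-covering numbers of $K$ and then invoke an entropy--$\vc$ inequality in the sup norm, whereas the quoted result~\eqref{mv_bound} is stated for $L_2(\mu)$. Since $\|\cdot\|_{L_2^N}\le\|\cdot\|_{\ell_\infty^N}$, a lower bound on $\ell_\infty$-covering numbers does not transfer to $L_2^N$-covering numbers, so you would either need a sup-norm version of the Mendelson--Vershynin bound or, as the paper does, run the volume comparison directly against Euclidean balls $rB_2^m$ to lower-bound $N_r(K,\ell_2^N)$ and rescale to $L_2^N$ with $t=N^{-1/2}r\asymp\eps^{1/2}$.
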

In fact, a weaker bound $\Vol_m(B_\infty^N\cap L_m)\ge c^m$
will suffice for our purposes.

\begin{exttheorem}[\cite{MV03}]
    Let $\mathcal F$ be a class of functions bounded by $1$, defined on a set
    $I$. Then for any probability measure $\mu$ on $I$,
    \begin{equation}
        \label{mv_bound}
    \log N_t(\mathcal F,L_2(\mu)) \le C\cdot \vc(\mathcal F,ct)\log(2/t),\quad 0<t<1,
    \end{equation}
    where $C$ and $c$ are positive absolute constants.
\end{exttheorem}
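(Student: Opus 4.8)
The plan is the standard architecture for ``combinatorial dimension controls metric entropy'': pass from the arbitrary measure $\mu$ to an empirical measure on a finite sample, and there prove a scale-sensitive refinement of the Sauer--Shelah lemma. It suffices to bound the cardinality $M$ of an arbitrary finite $t$-separated family $f_1,\dots,f_M\in\mathcal F$ in $L_2(\mu)$, since the $t$-covering number $N_t$ is at most the $t$-packing number. Write $d:=\vc(\mathcal F,ct)$, where the absolute constant $c$ is fixed later; if $d=\infty$ there is nothing to prove.

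\textbf{Step 1: reduction to a finite sample.} Draw $x_1,\dots,x_k$ independently from $\mu$. For a fixed pair $i\ne j$ the variables $|f_i(x_l)-f_j(x_l)|^2$ lie in $[0,4]$ and have mean $\ge t^2$, so Hoeffding's inequality gives $\tfrac1k\sum_{l=1}^k|f_i(x_l)-f_j(x_l)|^2\ge\tfrac12 t^2$ outside an event of probability $\le 2e^{-ckt^4}$. Choosing $k$ of order $t^{-4}\log M$ (the eventual $\log M$ will be reabsorbed at the end, by solving the final inequality for $M$) and taking a union bound over the $\binom{M}{2}$ pairs, we may fix sample points for which the trace vectors $a_i:=(f_i(x_l))_{l=1}^k\in[-1,1]^k$ are pairwise $(t/\sqrt2)$-separated in $L_2^k$. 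Restricting a class to a subset does not increase any fat-shattering dimension, so the set $A=\{a_1,\dots,a_M\}\subset[-1,1]^k$ still satisfies $\vc(A,ct)\le d$. The problem is thus reduced to: bound $M$ when $A\subset[-1,1]^k$ is $(t/\sqrt2)$-separated in $L_2^k$ and $\vc(A,ct)\le d$.

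\textbf{Step 2: scale-sensitive Sauer--Shelah (the crux).} Discretize $[-1,1]$ into $L\le C/t$ equally spaced levels and round every coordinate of every $a_i$ to the nearest level; this perturbs each vector by at most $t/200$ in $L_2^k$, so the rounded vectors $\bar a_i$ stay pairwise distinct and, for a slightly smaller absolute constant $c'$, the rounded set $\bar A\subset\{1,\dots,L\}^k$ satisfies $\vc(\bar A,c't)\le d$. The key combinatorial estimate is $|\bar A|\le(eLk/d)^{Cd}$. One proves it by the shifting (``down-shift'') technique underlying the classical bound \eqref{vc_std}: repeatedly lower one coordinate of all vectors by one level whenever the family stays a set of distinct vectors; this never increases $|\bar A|$ and never increases the fat-shattering dimension at the fixed scale, and it terminates at a ``staircase'' family whose cardinality is bounded by the number of admissible level-patterns carried on $d$-element coordinate sets, i.e.\ by $\sum_{i\le d}\binom{k}{i}L^i\le(eLk/d)^d$ via \eqref{binom} --- it is the $(t/\sqrt2)$-separation that keeps this count from degenerating. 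With $L\le C/t$ and $k$ of order $t^{-4}\log M$ this yields $\log M\le Cd\log(eLk/d)\le Cd\bigl(\log(2/t)+\log\log M\bigr)$, which solves to $\log M\le Cd\log(2/t)$; combined with Step 1 this is \eqref{mv_bound}.

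\textbf{Where the difficulty lies.} The whole weight of the proof is inside Step 2. The classical down-shift proof of Sauer--Shelah is written for $\{0,1\}$-valued classes and for exact shattering; making it \emph{scale-sensitive} --- so that a shift provably cannot create a new $c't$-fat-shattered coordinate set --- is delicate, because a single shift can generate new shattered patterns at scales comparable to the grid spacing. The standard fix (and the origin of the $\log(2/t)$ factor) is to keep the grid fine relative to the working scale, so that $L\asymp 1/t$ levels are used and a ``neutral'' symbol can be reserved for coordinates whose value is close to the witness; the partial patterns that survive are then still controlled by a Sauer--Shelah-type count, while a genuine clean pattern on a coordinate set $J$ forces $J$ to be $c't$-fat-shattered by $\mathcal F$, hence $|J|\le d$. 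Step 1 and the final arithmetic are routine: Hoeffding's inequality, the elementary bound \eqref{binom}, and solving an inequality of the shape $x\le a(b+\log x)$.
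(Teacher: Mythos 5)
The paper does not prove this statement at all: it is imported verbatim from Mendelson--Vershynin \cite{MV03} as a black box, so there is no internal proof to compare your argument against; I can only assess your sketch on its own terms.

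Your architecture (reduce to an empirical sample, then prove a scale-sensitive Sauer--Shelah bound) is the right family of ideas, but Step~2 --- which you yourself flag as carrying ``the whole weight of the proof'' --- is precisely the hard theorem, and it is asserted rather than proved. The down-shifting argument you invoke is the one of Alon, Ben-David, Cesa-Bianchi and Haussler, and what it actually yields for an $L$-level class of fat-shattering dimension $d$ on $k$ points is a bound of the shape $|\bar A|\le (kL^2)^{C\log y}$ with $y=\sum_{i\le d}\binom{k}{i}L^i$, i.e.\ $\log|\bar A|\lesssim d\,\log(Lk/d)\cdot\log(kL)$ --- an extra logarithmic factor in the exponent. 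The difficulty is exactly the one you mention in passing: a shift can create new fat-shattered sets at the working scale, and the known repair costs a logarithm. Removing that logarithm is the entire content of \cite{MV03}, and their proof does not proceed by shifting; it uses a different counting scheme (a recursive extraction of separated coordinate projections). So the ``key combinatorial estimate'' $|\bar A|\le(eLk/d)^{Cd}$ is not established by the technique you name.

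Even granting that estimate, your closing arithmetic does not produce the theorem. Preserving all $\binom{M}{2}$ pairwise $L_2$-separations forces $k\gtrsim t^{-4}\log M$, so the bound reads $\log M\le Cd\bigl(\log(2/t)+\log\log M\bigr)$. Setting $x=\log M$, the inequality $x\le Cd\log(2/t)+Cd\log x$ has self-consistent solutions as large as $x\asymp d\bigl(\log(2/t)+\log d\bigr)$: take $t=\tfrac12$ and $d$ enormous and the inequality permits $\log M\asymp d\log d$, not $Cd$. Thus ``which solves to $\log M\le Cd\log(2/t)$'' is false as written; a parasitic $d\log d$ (equivalently, a dependence on the sample size $k$) survives, and eliminating it is again the genuinely new point of \cite{MV03}. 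For the application in Lemma~\ref{lem_cube_section} of the paper one only needs the cruder two-logarithm bound, but as a proof of the quoted theorem the proposal has a real gap.
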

Here $N_\delta(K,X)$ is the size of the minimal $\delta$-net
for a set $K$ in the space $X$.

\begin{proof}[Proof of Lemma~\ref{lem_cube_section}]
Let $K=L_m\cap B_\infty^N$. From Theorem~\ref{ext_vaaler} we get that $\Vol_m(K)\ge 2^m$ and we can use standard volume estimate
$$
    N_r(K,\ell_2^N) = N_r(K,\ell_2^N\cap L_m) \ge
    \frac{\Vol_m(K)}{\Vol_m(r B_2^m)} \ge \frac{2^m}{(cr m^{-1/2})^m}\ge
    2^m,\quad\mbox{for $r=c_1m^{1/2}$}.
$$
    We will apply~\eqref{mv_bound} to the set $K$ (note that $\|z\|_\infty\le 1$
    for $z\in K$ by definition) in the space $L_2^N$
     for $t=N^{-1/2}r\asymp \eps^{1/2}$:
$$
\vc(K,ct)\ge c_1\log N_t(K,L_2^N)/\log(2/t) \ge c(\eps)N.
$$
\end{proof}

Now we are ready to prove Theorem~\ref{th_l1}.
\begin{proof}
    By duality, we have
$$
\E\rho(\xi,Q_n)_1 = \E \max_{z\in K} \langle z,\xi\rangle,\quad K := B_\infty^N\cap Q_n^\perp.
$$

    Lemma~\ref{lem_cube_section} gives a set of coordinates
    $\Lambda\subset\{1,\ldots,N\}$, $|\Lambda|\ge c(\eps)N$, and a number $v\ge
    c(\eps)$, such that for any choice of signs $\sigma\colon\Lambda\to\{-1,1\}$
there is a vector $z^\sigma\in K$ with
    \begin{equation}
        \label{lambda_final}
        \min_{i\in\Lambda} \sigma_i z^\sigma_i \ge v.
    \end{equation}
    (Recall that as $K$ is convex and centrally-symmetric we may assume that
    $z^\sigma$ oscillate around zero.)

    We will derive from~\eqref{lambda_final} that
    \begin{equation}
        \label{sharp}
    \E \max_\sigma \langle \xi,z^\sigma \rangle \ge v \sum_{i\in\Lambda} \E|\xi_i| -
        \sum_{i\not\in\Lambda} \|\E(\xi_i|\{\sign\xi_j\}_{j\in\Lambda})\|_{L_1}.
    \end{equation}

    Let us fix
    $\sigma^*\colon\Lambda\to\{-1,1\}$ and let $\E^*$ be the conditional expectation
    $\E^*(\cdot)=\E(\cdot|\sign\xi_i=\sigma_i^*,\,i\in\Lambda)$. We have
    $$
    \E^*\max_\sigma\langle\xi,z^\sigma\rangle \ge
    \E^*\langle\xi,z^{\sigma^*}\rangle =
    \E^*\sum_{i\in\Lambda}\xi_i z^{\sigma^*}_i + \sum_{i\not\in\Lambda}\xi_i
    z^{\sigma^*}_i \ge
    v\sum_{i\in\Lambda}\E^*|\xi_i| - \sum_{i\not\in\Lambda}|\E^*\xi_i|.
    $$
    Here we used that $\|z^{\sigma^*}\|_\infty\le 1$. We average over $\sigma^*$
    and obtain~\eqref{sharp}.

    The Theorem follows from~\eqref{sharp}, because the first sum there is at
    least $v|\Lambda|\ge c_\eps N$, and for each term in the second sum we have
    $$
    \|\E(\xi_i|\{\sign\xi_j\}_{j\in\Lambda})\|_{L_1} \le
    \|\E(\xi_i|\{\sign\xi_j\}_{j\ne i})\|_{L_1}.
    $$
\end{proof}

Let us give some corollaries of Theorem~\ref{th_l1} for the case when
$\E(\xi_i|\{\sign\xi_j\}_{j\ne i})\equiv0$.

\begin{corollary}
    \label{cor_indep_l1}
    Let $\xi_1,\ldots,\xi_N$ be independent random variables with $\E\xi_i=0$
    and $\E|\xi_i|\ge 1$. Then for any
    r.v.s $\eta_1,\ldots,\eta_n$, $n\le N(1-\eps)$, and any coefficients $\{a_{i,j}\}$ we have
    $$
    \E\|\xi-A\eta\|_1 = \sum_{i=1}^N \E|\xi_i-\sum_{j=1}^n a_{i,j}\eta_j| \ge
    c(\eps)N.
    $$
    In terms of widths, we have
    $$
    d_n^\avg(\{\xi_1,\ldots,\xi_N\},L_1) = N^{-1}d_n^\avg(\xi,\ell_1^N) \ge c(\eps).
    $$
    This also holds true for unconditional vectors $\xi$, that is, if
     $\Law(\xi_1,\ldots,\xi_N)=\Law(\pm\xi_1,\ldots,\pm\xi_N)$.
\end{corollary}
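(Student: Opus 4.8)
The plan is to obtain the corollary as an immediate consequence of Theorem~\ref{th_l1} together with Statement~\ref{stm_equiv}, the only real content being the verification that the correction term $\sum_i\|\E(\xi_i\mid\{\sign\xi_j\}_{j\ne i})\|_{L_1}$ in \eqref{l1rig} vanishes. For the independent case this is immediate: $\{\sign\xi_j\}_{j\ne i}$ is a (finitely-valued) function of $(\xi_j)_{j\ne i}$, which is independent of $\xi_i$, so $\E(\xi_i\mid\{\sign\xi_j\}_{j\ne i})=\E\xi_i=0$ almost surely and the corresponding $L_1$-norm is $0$ for every $i$. Feeding this into \eqref{l1rig} and using $n\le N(1-\eps)$ gives $d_n^\avg(\xi,\ell_1^N)\ge c(\eps)N$.

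Next I would pass to the ``for any coefficients'' formulation. Given an $N\times n$ matrix $A=(a_{i,j})$ and any $\R^n$-valued random vector $\eta=(\eta_1,\ldots,\eta_n)$ on the same probability space, the vector $A\eta$ takes values in the column space $Q_n:=A(\R^n)$, which has dimension at most $n$. Hence pointwise $\|\xi-A\eta\|_1\ge\rho(\xi,Q_n)_1$, and taking expectations, $\E\|\xi-A\eta\|_1\ge\E\rho(\xi,Q_n)_1\ge d_n^\avg(\xi,\ell_1^N)\ge c(\eps)N$; the displayed identity $\E\|\xi-A\eta\|_1=\sum_i\E|\xi_i-\sum_j a_{i,j}\eta_j|$ is just linearity of expectation and the definition of the $\ell_1^N$-norm. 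The width reformulation is then exactly Statement~\ref{stm_equiv} with $p=1$, which yields $d_n^\avg(\{\xi_1,\ldots,\xi_N\},L_1)=N^{-1}d_n^\avg(\xi,\ell_1^N)\ge c(\eps)$.

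Finally, for the unconditional case one argues that $\E(\xi_i\mid\{\sign\xi_j\}_{j\ne i})=0$ still holds: since $\Law(\eps_1\xi_1,\ldots,\eps_N\xi_N)=\Law(\xi)$ for all $\eps\in\{-1,1\}^N$, flipping only the $i$-th sign leaves the joint law unchanged, and this flip does not alter $\{\sign\xi_j\}_{j\ne i}$; therefore the conditional law of $\xi_i$ given $\{\sign\xi_j\}_{j\ne i}$ is symmetric about $0$, so its conditional mean is $0$. The rest of the argument is verbatim as above. I do not expect any genuine obstacle here; the only point requiring a little care is bookkeeping — all conditioning variables take finitely many values, so the elementary conditional expectation used throughout the paper is adequate, and in particular $\E|\xi_i|\ge1$ is the only quantitative input needed besides the vanishing of the conditional mean.
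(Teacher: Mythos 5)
Your proposal is correct and follows exactly the route the paper intends: the corollary is stated as an immediate consequence of Theorem~\ref{th_l1} in the case $\E(\xi_i\mid\{\sign\xi_j\}_{j\ne i})\equiv 0$, and your verifications (independence, resp.\ the sign-flip symmetry in the unconditional case, kill the correction term; column space of $A$ gives the subspace; Statement~\ref{stm_equiv} with $p=1$ gives the width identity) are precisely the omitted details. No gaps.
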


\begin{example}
    Suppose that $\int_0^1 f_k(x)\,dx=0$, $\int_0^1 |f_k(x)|\,dx=1$ for
    $k=1,\ldots,N$. Then
    $$
    d_n^\avg(\{f_1(x_1),\ldots,f_N(x_N)\}, L_1[0,1]^N)\ge c(\eps),
    \quad\mbox{if $n\le N(1-\eps)$}.
$$
\end{example}

\subsection{Rigidity in $L_0$}
\label{subsec_l0}

\begin{theorem}
    \label{th_l0}
    For any $\eps\in(0,1)$ there exists $\delta>0$, such that if a random vector
    $\xi=(\xi_1,\ldots,\xi_N)$ has independent coordinates and
    $$
    \inf_{c\in\R}\|\xi_i-c\|_{L_0}\ge\eps,\quad i=1,\ldots,N,
    $$
    then for any subspace $Q_n\subset\R^N$ of dimension $n\le \delta N$, we have
    \begin{equation}\label{exp_prob}
        \P(\rho(\xi,Q_n)_{L_0^N} \le \delta)\le 2\exp(-\delta N),
    \end{equation}
    where $\xi=(\xi_1,\ldots,\xi_N)$.
\end{theorem}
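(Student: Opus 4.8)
The plan is to prove the sharper probabilistic estimate \eqref{exp_prob} for a single fixed subspace $Q_n$, $\dim Q_n\le n\le\delta N$. There is a continuum of subspaces, but the event $\{\rho(\xi,Q_n)_{L_0^N}\le\delta\}$ will be captured by a \emph{finite} union bound — over subsets of coordinates, and over sign patterns attainable by $Q_n$ — after which independence is used coordinatewise. \emph{Step 1 (a robust sign vector).} From $\inf_{c\in\R}\|\xi_i-c\|_{L_0}\ge\eps$ one first extracts, for each $i$, a threshold $u_i\in\R$ and a constant $\kappa=\kappa(\eps)>0$ (one may take $\kappa=\eps/8$) with
$$\P(\xi_i\ge u_i+\kappa)\ge\kappa,\qquad \P(\xi_i\le u_i-\kappa)\ge\kappa;$$
this is a short quantile argument, since otherwise $\xi_i$ would concentrate within $\kappa$ of a point with probability $>1-2\kappa$, contradicting the hypothesis. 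Put $\sigma_i:=1$ on $\{\xi_i\ge u_i+\kappa\}$, $\sigma_i:=-1$ on $\{\xi_i\le u_i-\kappa\}$, and $\sigma_i:=0$ otherwise; the $\sigma_i$ are independent with $\P(\sigma_i=1),\P(\sigma_i=-1)\ge\kappa$. The elementary observation driving everything is: if $\delta\le\kappa/2$ and $y$ satisfies $|\xi_i-y_i|\le\delta$, then $\sign(y_i-u_i)=\sigma_i$ for every $i$ with $\sigma_i\ne0$.

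\emph{Step 2 (reduction to a combinatorial event).} Unwinding the definition of $\|\cdot\|_{L_0^N}$, the inequality $\rho(\xi,Q_n)_{L_0^N}\le\delta$ yields $y\in Q_n$ and $G\subset\{1,\dots,N\}$ with $|G|\ge(1-\delta)N$ and $|\xi_i-y_i|\le\delta$ for $i\in G$. Set $A:=\{i:\sigma_i\ne0\}$ and $R:=G\cap A$, so $R\subset A$ and $|A\setminus R|\le|G^{c}|\le\delta N$; by Step 1 the vector $y-u$ (with $u:=(u_i)_i$) lies in the affine subspace $Q_n-u$ of dimension $\le n$ and has $\sign((y-u)_i)=\sigma_i$ for all $i\in R$. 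Hence $\{\rho(\xi,Q_n)_{L_0^N}\le\delta\}$ is contained in
$$\mathcal B:=\bigl\{\,\exists\,R\subset A:\ |A\setminus R|\le\delta N,\ \sigma|_R\in\Sigma_R\,\bigr\},$$
where $\Sigma_R\subset\{-1,1\}^R$ is the \emph{deterministic} (given $R$) set of vectors $(\sign z_i)_{i\in R}$, $z\in Q_n-u$. Expressing $z\in Q_n-u$ in coordinates on $Q_n$, each $z_i$ is an affine function of at most $n$ parameters, so $\Sigma_R$ is bounded by the number of regions of an arrangement of $|R|$ affine hyperplanes in $\R^{\le n}$: $|\Sigma_R|\le\binom{|R|}{0}+\dots+\binom{|R|}{n}$.

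\emph{Step 3 (union bound and concentration).} Since $\sum_i\mathbf 1[\sigma_i\ne0]$ is a sum of independent $\{0,1\}$ variables with mean $\ge2\kappa N$, Hoeffding's inequality gives $\P(|A|<\kappa N)\le\exp(-2\kappa^2 N)$. Condition on $A$ with $|A|\ge\kappa N$; then every admissible $R$ satisfies $|R|\ge\kappa N-\delta N\ge\kappa N/2\ge n$ once $\delta\le\kappa/2$, so by \eqref{binom}, $|R|\le N$ and the monotonicity of $(ek/x)^x$,
$$|\Sigma_R|\le\Bigl(\frac{eN}{n}\Bigr)^{n}\le\Bigl(\frac e\delta\Bigr)^{\delta N},\qquad \#\{R:\ |A\setminus R|\le\delta N\}\le\sum_{j\le\delta N}\binom{|A|}{j}\le\Bigl(\frac e\delta\Bigr)^{\delta N}.$$
Conditionally on $A$ the variables $\sigma_i$, $i\in A$, are independent with $\P(\sigma_i=\pm1\mid A)\le1-\kappa$, so $\P(\sigma|_R=s\mid A)\le(1-\kappa)^{|R|}\le(1-\kappa)^{\kappa N/2}$ for each fixed $s$. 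Collecting everything,
$$\P\bigl(\rho(\xi,Q_n)_{L_0^N}\le\delta\bigr)\le\exp(-2\kappa^2 N)+\Bigl(\frac e\delta\Bigr)^{2\delta N}(1-\kappa)^{\kappa N/2}.$$
As $\delta\log(e/\delta)\to0$ when $\delta\to0$, I would fix $\delta=\delta(\eps)>0$ small enough that the last term is at most $\exp(-\tfrac14\kappa^2 N)$ and also $\delta\le\kappa/2$ and $\delta\le\tfrac14\kappa^2$; this gives $\P(\rho(\xi,Q_n)_{L_0^N}\le\delta)\le2\exp(-\delta N)$, which is \eqref{exp_prob}.

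\emph{Main obstacle.} The delicate point is that the ``good set'' $G$ and the ``spread set'' $A$ are random and correlated with $\sigma$; a direct union bound over $G$, or over $R$, costs a factor $2^{N}$, which would swamp the $(1-\kappa)^{\Theta(\kappa N)}$ gain from independence. The cure is twofold: condition on $A$, which is sharply concentrated as a sum of independent indicators, and then union-bound over the \emph{small} removed set $A\setminus R$ — only $(e/\delta)^{\delta N}$ choices — instead of over $R$. Together with the arrangement bound $(e/\delta)^{\delta N}$, all combinatorial factors stay subexponential, $\exp(o_\delta(1)\,N)$, which is exactly what lets $\delta$ be chosen after $\eps$.
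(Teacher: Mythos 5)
Your argument is correct, and it follows the same overall strategy as the paper's proof (coordinatewise thresholds producing an i.i.d.-type sign vector, a subexponential count of the sign patterns compatible with an $n$-dimensional approximant, the $(1-\kappa)^{c\kappa N}$ probability of any fixed pattern from independence, and union bounds over the small exceptional sets costing only $(e/\delta)^{O(\delta N)}$), but your key counting step is genuinely different. The paper fixes the exceptional sets, looks at the set $K$ of possible values of the restricted vector $\xi'$, and shows via Sauer--Shelah~\eqref{vc_std} together with the shattering-to-width implication~\eqref{vc_width} that more than $(eN'/n)^n$ patterns would force $d_n(K,\ell_\infty^{N'})\ge 5\delta/4$, contradicting the assumed approximation; you instead count the sign patterns realized by the approximants themselves, i.e.\ regions of an arrangement of $|R|$ affine hyperplanes in dimension $\le n$, which gives $\sum_{j\le n}\binom{|R|}{j}$ directly and avoids the $\vc$ machinery and the set $K$ altogether. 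A second difference: the paper shrinks the intermediate band to width $\asymp\delta/\eps$ (Lemma~\ref{lem_l0}) and controls the intermediate coordinates by Bernstein, while you keep a band of fixed width $\asymp\eps$ and condition on the concentrated set $A$, which is a bit simpler. Two small points to make explicit: define $\Sigma_R$ as the set of \emph{strict} sign patterns (realized at $z$ with $z_i\ne 0$ for $i\in R$) --- that is all you use, since $|y_i-u_i|\ge\kappa/2$ on $R$, and it is what the region count actually bounds, the convention $\sign(0)=1$ being able to add boundary patterns otherwise; and passing from $\rho(\xi,Q_n)_{L_0^N}\le\delta$ to an approximant with a coordinatewise bound needs an epsilon of room (work with $2\delta$, say), which only changes constants.
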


The condition $\|\xi_i-c\|_{L_0}\ge\eps$ is essential here because it
forbids the approximation of the vector $\xi$ by a constant vector. This
condition allows to somehow ``separate'' the values of $\xi_i$.

\begin{lemma}\label{lem_l0}
    Let $\eps,\tau\in(0,1]$ and $\zeta$ is a r.v. such that
    $\inf_{c\in\R}\|\zeta-c\|_{L_0}\ge\eps$. Then there exist real numbers $a<b$, such that
    $\P(\zeta\le a)\ge \eps/3$, $\P(\zeta\ge b)\ge \eps/3$,
    $\P(a<\zeta<b)\le\tau$,
    $|b-a|\ge\tau\eps/2$.
\end{lemma}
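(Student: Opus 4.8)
The plan is to extract the pair $a<b$ from the distribution function of $\zeta$ by a quantile argument, using the hypothesis $\inf_{c}\|\zeta-c\|_{L_0}\ge\eps$ to guarantee that $\zeta$ is genuinely spread out. First I would recall what the hypothesis says: for the Ky–Fan norm, $\|\zeta-c\|_{L_0}\ge\eps$ means $\P(|\zeta-c|\ge\eps)\ge\eps$ for every constant $c$. In particular, taking $c$ to be (say) a median-type value, no interval of length $2\eps$ can carry probability more than $1-\eps$; equivalently, for every $t\in\R$ the set $\{|\zeta-t|<\eps\}$ has measure $<1-\eps$, so $\P(\zeta\le t-\eps)+\P(\zeta\ge t+\eps)>\eps$ — there is always a nontrivial amount of mass at distance $\ge\eps$ from any point. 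I would turn this into a two-sided statement by choosing a point $t_0$ with $\P(\zeta<t_0)\le 1/2\le\P(\zeta\le t_0)$ (a median), from which $\P(\zeta\le t_0-\eps)\ge\eps/3$ or $\P(\zeta\ge t_0+\eps)\ge\eps/3$ would follow; a little more care (splitting on which side the median sits relative to the spread) gives that \emph{both} tails $\P(\zeta\le a_0)\ge\eps/3$ and $\P(\zeta\ge b_0)\ge\eps/3$ hold for a suitable pair $a_0<b_0$ with $b_0-a_0\ge\eps$ (possibly after replacing $\eps/2$ by $\eps/3$ to absorb boundary atoms). This already delivers the first two conclusions and a weak form of the last.

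The remaining issue is the ``gap'' condition $\P(a<\zeta<b)\le\tau$ together with the quantitative lower bound $b-a\ge\tau\eps/2$: I need to find $a,b$ inside $[a_0,b_0]$ that are far apart yet enclose little mass. The idea is a pigeonhole on a grid. Partition the interval $[a_0,b_0]$ (of length $\ge\eps$) into roughly $2/\tau$ consecutive subintervals each of length about $\tau\eps/2$. Their total mass is at most $1$, so by averaging at least one of these subintervals $I=(a,b)$ has $\P(\zeta\in I)\le \tau/2\le\tau$; that subinterval has $b-a\ge\tau\eps/2$ by construction, and since it lies between $a_0$ and $b_0$ we still have $\P(\zeta\le a)\ge\P(\zeta\le a_0)\ge\eps/3$ and $\P(\zeta\ge b)\ge\P(\zeta\ge b_0)\ge\eps/3$. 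Combining the quantile step with this pigeonhole step yields all four assertions.

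I expect the main obstacle to be the bookkeeping in the quantile step: atoms of $\zeta$ can force the tails to be slightly unbalanced, so the constant $\eps/3$ (rather than $\eps/2$) in the tail bounds, and the choice of which of the two candidate tails to anchor $a_0,b_0$ on, require a careful case split on the location of the median relative to the ``$\eps$-spread'' guaranteed by the hypothesis. Everything after that — the grid pigeonhole and the monotonicity of the distribution function used to propagate the tail bounds inward — is routine. One should double-check that the numerology is consistent: the grid has $\lceil 2/\tau\rceil$ cells of length $\ge \tau\eps/2$ fitting inside an interval of length $\ge\eps$ only needs $(\text{number of cells})\cdot(\text{cell length})\le b_0-a_0$, i.e.\ $\approx 2/\tau \cdot \tau\eps/2 = \eps \le b_0-a_0$, which matches; and the minimum-mass cell has mass $\le 1/\lceil 2/\tau\rceil\le\tau/2\le\tau$, as needed.
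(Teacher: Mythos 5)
Your argument is correct and is essentially the paper's proof: the paper takes the $\eps/3$-quantiles $q_-<q_+$ (so both tails carry mass $\ge\eps/3$), shows $q_+-q_-\ge\eps$ because otherwise $\|\zeta-c\|_{L_0}\le\tfrac23\eps$ for the midpoint $c$, and then divides $[q_-,q_+]$ into $\lceil 1/\tau\rceil$ equal segments, choosing the one of least mass --- exactly your quantile-plus-pigeonhole scheme (your variant anchored at a median works just as well). The only slip is the ceiling bookkeeping at the end: $\lceil 2/\tau\rceil$ cells of length $\tau\eps/2$ need not fit inside an interval of length $\eps$ (try $\tau=0.9$), but $\lfloor 2/\tau\rfloor\ge 1/\tau$ such cells do fit, giving a least-mass cell of probability $\le\tau$, which is all the lemma requires.
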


\begin{proof}
    Consider the quantiles 
    $$
    q_- := \inf\{x\colon \P(\zeta\le x)\ge \eps/3\},\quad
    q_+ := \sup\{x\colon \P(\zeta\ge x)\ge \eps/3\}.
    $$
    Note that $\P(\zeta\le q_-)\ge \eps/3$ and $\P(\zeta\ge q_+)\ge \eps/3$.
    We see that $|q_+-q_-|\ge\eps$, because otherwise $\|\zeta-c\|_{L_0}\le\frac23\eps$ for
    $c=\frac12(q_-+q_+)$. Pick $k:=\lceil1/\tau\rceil$ and divide
    $[q_-,q_+]$ into $k$ equal segments and choose $(a,b)$ to be the segment
    having the least probability $\P(a<\zeta<b)$. Then this probability is at most $1/k\le\tau$.
    Finally, $|b-a|\ge |q_+-q_-|/k \ge \frac12\tau\eps$.
\end{proof}

Let us prove the Theorem.
\begin{proof}
    Pick some small $\delta>0$ (to be chosen later) and estimate
    $\P(\rho(\xi,Q_n)_{L_0^N}<\delta)$, where $\dim Q_n\le n\le \delta N$.
    Consider an almost best $L_0^N$ approximation of 
    $(\xi_1,\ldots,\xi_N)$ by a (random) vector $(y_1,\ldots,y_N)$ from the
    subspace $Q_n$.
    Let
    $$
    \Lambda := \{k\colon |\xi_k-y_k| \ge \delta\}
    $$
    be the (random) set of badly approximated coordinates. If
    $\|\xi-y\|_{L_0^N} < \delta$, then $|\Lambda|<\delta N$; so it is enough to bound the
    probability of the event $\{|\Lambda| \le \delta N\}$.

    We apply Lemma~\ref{lem_l0} to $\tau := 5\delta/\eps$
    and $\xi_k$ and find segments $(a_k,b_k)$ such that:
    $\P(a_k<\xi_k<b_k)\le \tau$, $|b_k-a_k|\ge\tau\eps/2 = 5\delta/2$,
    $\max\{\P(\xi_k\le a_k),\P(\xi_k\ge b_k)\}\le1-\eps/3$.
    Let
    $$
    \Gamma := \{k\colon \xi_k\in(a_k,b_k)\}
    $$
    be the (random) set of ``intermediate'' coordinates.  We have $\E|\Gamma|\le
    \tau N$; by Bernstein inequality we have $\P(|\Gamma|\le 2\tau
    N)\ge 1-2\exp(-c\tau N)$ and
    \begin{equation}
        \label{prob_lambda}
    \P(|\Lambda|\le \delta N) \le \P(|\Lambda|\le \delta N,\;|\Gamma|\le 2\tau N) + 2\exp(-c\tau N).
    \end{equation}
    
    Fix some sets $\Lambda^\circ,\Gamma^\circ\subset\{1,\ldots,N\}$ of size
    $|\Lambda^\circ|\le\delta N$, $|\Gamma^\circ|\le 2\tau N$ and consider the event
    $\{\Lambda=\Lambda^\circ,\;\Gamma=\Gamma^\circ\}$. 
    Let $N'=N-|\Lambda^\circ\cup\Gamma^\circ|$; note that $N'\ge N/2$. 
    For $x\in\R^N$ let $x'\in\R^{N'}$ be the vector $x$ without coordinates from
    $\Lambda^\circ\cup\Gamma^\circ$. Then
    $\|\xi'-y'\|_{\ell_\infty^{N'}} \le \delta$ with $y'\in Q_n'$; therefore,
    \begin{equation}
        \label{d_K}
    d_n(K,\ell_\infty^{N'})\le\delta,\quad
        K:=\{\xi'(\omega)\colon \Lambda(\omega)=\Lambda^\circ,\;\Gamma(\omega)=\Gamma^\circ\}.
    \end{equation}
    Consider the random vector $\pi$ with coordinates $\pi_k := -1$ if $\xi_k\le
    a_k$; $\pi_k := 0$ if $a_k<\xi_k<b_k$; $\pi_k := 1$ if $\xi_k\ge b_k$.
    Note that $\pi'\in\{-1,1\}^{N'}$ by construction. We argue that the set
    $S := \{\pi'(\omega)\colon
    \Lambda(\omega)=\Lambda^\circ,\;\Gamma(\omega)=\Gamma^\circ\}$
    of all possible values of $\pi'$ has at most $(eN'/n)^{n}$ elements.
    Indeed, otherwise~\eqref{vc_std},~\eqref{binom} imply that $S$ contains a cube of
    dimension $n+1$ and
    $$
    \vc(K,t)\ge n+1,\quad t:=\frac12\min(b_k-a_k)\ge 5\delta/4,
    $$
    so~\eqref{vc_width} implies that $d_n(K,\ell_\infty^{N'})\ge 5\delta/4$, that
    contradicts~\eqref{d_K}. Given any $s\in S$ we have
    $\P(\pi'=s)\le(1-\eps/3)^{N'}$, so
    $$
    \P(\Lambda=\Lambda^\circ,\;\Gamma=\Gamma^\circ) \le
    (eN/n)^n(1-\eps/3)^{N/2}.
    $$
    Finally, we bound the probability in~\eqref{prob_lambda}:
    \begin{multline*}
        \P(|\Lambda|\le \delta N,\;|\Gamma|\le 2\tau N) =
        \sum_{|\Lambda^\circ|\le\delta N,|\Lambda^\circ|\le 2\tau N}\P(\Lambda=\Lambda^\circ,\;\Gamma=\Gamma^\circ) \le \\ 
        \le (eN/(\delta N))^{\delta N}\cdot (eN/(2\tau N))^{2\tau N} \cdot (eN/n)^{n}\cdot
        (1-\eps/3)^{N/2}.
    \end{multline*}
    It remains to choose the parameter $\delta$ small enough to
    ensure that this probability is exponentially small.
\end{proof}

We can give a corollary on the best $n$-term approximation by a dictionary:
$$
\sigma_n(x,\Phi)_X := \inf_{\substack{\varphi_1,\ldots,\varphi_n\in\Phi\\c_1,\ldots,c_n\in\R}}
\|x-\sum_{k=1}^n
        c_k\varphi_k\|_X.
$$

\begin{corollary}
    In the conditions of Theorem~\ref{th_l0}, for any set $\Phi\subset\R^N$,
    $|\Phi|\le AN$ and $n<\delta N$, we have
    $$
        \P(\sigma_n(\xi,\Phi)_{L_0^N} \le \delta)\le 2\exp(-\delta N)
    $$
\end{corollary}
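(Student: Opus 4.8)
The plan is to reduce $n$-term approximation from the dictionary $\Phi$ to the subspace approximation already controlled by Theorem~\ref{th_l0}, via a union bound over the finitely many subspaces one can build from $\Phi$. First I would note that every $n$-term combination $\sum_{k=1}^n c_k\varphi_k$ with $\varphi_k\in\Phi$ lies in $\mathrm{span}\,S$ for the set $S=\{\varphi_1,\dots,\varphi_n\}\subseteq\Phi$, which has at most $n$ elements; since $\Phi$ is finite the distances $\rho(\xi,\mathrm{span}\,S)_{L_0^N}$ range over a finite family, so
$$
\sigma_n(\xi,\Phi)_{L_0^N}=\min_{S\subseteq\Phi,\ |S|\le n}\rho(\xi,\mathrm{span}\,S)_{L_0^N}.
$$
Hence the event $\{\sigma_n(\xi,\Phi)_{L_0^N}\le\delta\}$ is the union, over those $S$, of the events $\{\rho(\xi,\mathrm{span}\,S)_{L_0^N}\le\delta\}$, and the number of relevant $S$ is at most $\sum_{j=0}^n\binom{|\Phi|}{j}\le\sum_{j=0}^n\binom{AN}{j}\le(eAN/n)^n$ by~\eqref{binom}.

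Next I would fix $\delta_0=\delta_0(\eps)$ to be the constant furnished by Theorem~\ref{th_l0}, and take the corollary's constant to be some $\delta=\delta(\eps,A)$ with $\delta\le\min(\delta_0,A)$, to be pinned down at the end. Since $n<\delta N\le\delta_0 N$, every $\mathrm{span}\,S$ above is a subspace of dimension $\le n\le\delta_0 N$, so Theorem~\ref{th_l0} gives
$$
\P\bigl(\rho(\xi,\mathrm{span}\,S)_{L_0^N}\le\delta\bigr)\le\P\bigl(\rho(\xi,\mathrm{span}\,S)_{L_0^N}\le\delta_0\bigr)\le 2\exp(-\delta_0 N).
$$
A union bound, together with the monotonicity of $x\mapsto(eAN/x)^x$ on $(0,AN]$ noted after~\eqref{binom} and the inequality $n<\delta N$, then yields
$$
\P\bigl(\sigma_n(\xi,\Phi)_{L_0^N}\le\delta\bigr)\le\Bigl(\tfrac{eAN}{n}\Bigr)^{n}2\exp(-\delta_0 N)\le 2\exp\bigl(\delta N\log(eA/\delta)-\delta_0 N\bigr).
$$

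Finally I would choose $\delta$: because $\delta\log(e^2A/\delta)\to 0$ as $\delta\to+0$, one can pick $\delta=\delta(\eps,A)>0$ with $\delta\le\min(\delta_0,A)$ and $\delta\log(e^2A/\delta)\le\delta_0$; then the exponent above is $\le-\delta N$, which is exactly the asserted bound. The one genuinely delicate point is precisely this bookkeeping of constants: the number of candidate subspaces is itself exponential in $N$, of order $(eA/\delta)^{\delta N}$, so it is essential to invoke Theorem~\ref{th_l0} at the \emph{larger} threshold $\delta_0$ (paying an exponent $\delta_0 N$) rather than at $\delta$, and then to shrink $\delta$ far enough that $\delta_0 N$ dominates the union-bound factor with a surplus of $e^{-\delta N}$. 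Everything else is routine.
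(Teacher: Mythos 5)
Your argument is correct and is exactly the paper's intended proof: the paper disposes of this corollary in one line by noting that there are only exponentially many subspaces spanned by $n$-element subsets of $\Phi$, applying Theorem~\ref{th_l0} to each, and shrinking $\delta=\delta(A,\eps)$ so the union bound stays exponentially small. Your write-up merely makes the bookkeeping (the count $(eAN/n)^n$, invoking the theorem at its own threshold $\delta_0$, and the final choice $\delta\log(e^2A/\delta)\le\delta_0$) explicit.
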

\begin{proof}
Indeed, there are exponential number of subspaces, so we can choose smaller
    $\delta=\delta(A,\eps)$ to provide an exponential bound on the probability.
\end{proof}

We need a modification of Statement~\ref{stm_equiv}.
\begin{lemma}
    \label{lem_equiv_l0}
    For any random vector $\xi$ in $\R^N$ we have
    $$
    c\,d_n^\avg(\xi,L_0^N)^4 \le d_n^\avg(\{\xi_1,\ldots,\xi_N\},L_0)
    \le C\,d_n^\avg(\xi,L_0^N)^{1/4}.
    $$
\end{lemma}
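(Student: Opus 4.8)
The plan is to follow the proof of Statement~\ref{stm_equiv}, keeping in mind that the Ky--Fan functionals are \emph{not} additive under the transposition \eqref{xi_eta_transpose}, so no exact identity is possible and a polynomial comparison is the best one can hope for. Throughout write $m:=d_n^\avg(\xi,L_0^N)$ and $M:=d_n^\avg(\{\xi_1,\dots,\xi_N\},L_0)$; note $m,M\in[0,1]$. The only fact about the functionals that I will use is the elementary dictionary relating the value to the distribution: if $\nu$ is a probability measure and $\|g\|_\nu:=\sup\{\eps\ge0\colon\nu(|g|\ge\eps)\ge\eps\}$, then $\nu(|g|\ge\eps)\le\eps$ implies $\|g\|_\nu\le\eps$, and $\|g\|_\nu\le\eps$ implies $\nu(|g|>\eps)\le\eps$; taking $\nu=\P$ gives $\|\cdot\|_{L_0}$, taking $\nu$ the normalised counting measure on $\{1,\dots,N\}$ gives $\|\cdot\|_{L_0^N}$. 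Exactly as in Statement~\ref{stm_equiv}, approximating $\xi_1,\dots,\xi_N$ by an $n$-dimensional subspace $Q_n\subset L_0(\Omega)$ is the same as choosing $Q\subset\R^N$ with $\dim Q\le n$, a Borel (piecewise-constant) map $\pi\colon\R^N\to Q$, and setting $\eta:=\pi(\xi)$, so that $\eta(\omega)\in Q$ a.s.; a near-best such $\pi$ is produced as there, chopping $\R^N$ into countably many pieces of small $L_0^N$-diameter (which is legitimate since $\|v\|_{L_0^N}\le\|v\|_\infty$).

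For the inequality $M\le Cm^{1/4}$, I would fix $\eps'>0$, a subspace $Q$ with $\dim Q\le n$ and $\E\rho(\xi,Q)_{L_0^N}\le m+\eps'$, a near-best Borel $\pi$, and put $\eta:=\pi(\xi)$, $Z_k:=|\xi_k-\eta_k|$, $r:=\E\|\xi-\eta\|_{L_0^N}\le m+2\eps'$; then apply Markov twice. With $\lambda:=r^{-1/2}\ge1$ the event $G:=\{\|\xi-\eta\|_{L_0^N}\le\lambda r\}$ has $\P(G)\ge1-1/\lambda$, and the dictionary gives $\#\{k\colon Z_k(\omega)>\lambda r\}\le\lambda rN$ on $G$; integrating over $G$, $\frac1N\sum_k\P(G\cap\{Z_k>\lambda r\})\le\lambda r$, hence $\frac1N\sum_k\P(Z_k>\lambda r)\le\lambda r+1/\lambda$. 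Since the dictionary also gives $\|\xi_k-\eta_k\|_{L_0}\le\max(\lambda r,\P(Z_k>\lambda r))$, the subspace $\mathrm{span}\{\eta_1,\dots,\eta_N\}$ (of dimension $\le n$) shows $M\le\frac1N\sum_k\|\xi_k-\eta_k\|_{L_0}\le2\lambda r+1/\lambda=3\sqrt r\le 3\sqrt{m+2\eps'}$, and letting $\eps'\to0$ and using $m\le1$ gives $M\le 3\sqrt m\le 3\,m^{1/4}$.

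For the inequality $c\,m^4\le M$ I may assume $M<1$ (else $c\,m^4\le c\le M$ for small $c$). Fix $\eps'>0$ and $Q_n\subset L_0(\Omega)$ with $\frac1N\sum_k\rho(\xi_k,Q_n)_{L_0}\le M+\eps'$, choose near-best $\eta_k=\sum_j a_{k,j}g_j\in Q_n$, set $Q:=A\R^n\subset\R^N$ (so $\dim Q\le n$) and $\eta(\omega):=A(g_1(\omega),\dots,g_n(\omega))\in Q$, and put $Z_k:=|\xi_k-\eta_k|$, $s:=\frac1N\sum_k\|\xi_k-\eta_k\|_{L_0}\le M+2\eps'<1$ for small $\eps'$. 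Now apply Markov twice, first over the index. The set $K_\lambda:=\{k\colon\|\xi_k-\eta_k\|_{L_0}\le\lambda s\}$ has $|K_\lambda|\ge N(1-1/\lambda)$, and the dictionary gives $\P(Z_k>\lambda s)\le\lambda s$ for $k\in K_\lambda$; summing and then applying Markov over $\omega$, the event $\{\#\{k\in K_\lambda\colon Z_k(\omega)>\lambda s\}>N/\lambda\}$ has probability $\le\lambda^2 s$. Off this event $\#\{k\colon Z_k(\omega)>\lambda s\}\le2N/\lambda$, which by the dictionary forces $\|\xi(\omega)-\eta(\omega)\|_{L_0^N}\le\lambda s+2/\lambda$; hence $m\le\E\|\xi-\eta\|_{L_0^N}\le\lambda s+2/\lambda+\lambda^2 s$. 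Choosing $\lambda:=s^{-1/3}\ge1$ gives $m\le s^{2/3}+3s^{1/3}\le 4s^{1/3}\le 4(M+2\eps')^{1/3}$, so $m\le 4M^{1/3}$ and, using $M\le1$, $c\,m^4\le M$ with $c=4^{-4}$.

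I expect the genuinely delicate points to be, first, the measurable-selection bookkeeping that turns abstract $L_0(\Omega)$-subspaces into honest subspaces of $\R^N$ (routine, handled verbatim as in Statement~\ref{stm_equiv}) and, second, applying the dictionary at the correct scale $\lambda$ in each of the two Markov steps. The loss of exponents is intrinsic to this method: each transfer between the two averagings costs one Markov inequality, so two transfers degrade the estimate polynomially. The exponent $\tfrac14$ in the statement is merely a crude common bound for the sharper estimates $M\lesssim\sqrt m$ and $m\lesssim M^{1/3}$ obtained above.
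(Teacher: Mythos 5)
Your proof is correct and takes essentially the same route as the paper: two Chebyshev/Markov-type transfers between the index-average and the probability-average of the Ky--Fan functionals at matched thresholds, combined with the Statement~\ref{stm_equiv}-style passage between $n$-dimensional subspaces of $L_0(\Omega)$ and of $\R^N$ (the paper writes out one direction this way and declares the other ``analogous''). Your bookkeeping even gives the slightly sharper bounds $M\le 3\sqrt{m}$ and $m\le 4M^{1/3}$, which of course imply the stated exponents $1/4$ and $4$.
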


\begin{proof}
    Let $d_n^\avg(\{\xi_1,\ldots,\xi_N\},L_0) \le \eps\le 1$. Then for some
    $\eta_1,\ldots,\eta_N$ from an $n$-dimensional space we have
    $\sum\|\xi_k-\eta_k\|_{L_0}\le N\eps$. Define random sets
    $\Lambda_t := \{k\colon |\xi_k-\eta_k|\ge t\}$; note that $\|\xi-\eta\|\ge
    t$ if and only if $|\Lambda_t|\ge tN$.

    Take $\delta := \sqrt\eps$ and obtain
    \begin{multline*}
    \E|\Lambda_\delta| = \sum_{k=1}^N \P(|\xi_k-\eta_k|\ge\delta) \le \delta N +
    |\{k\colon\P(|\xi_k-\eta_k|\ge\delta)\ge\delta\}| \le \\
        \le \delta N + |\{k\colon \|\xi_k-\eta_k\|_{L_0}\ge\delta\}| \le \delta N +
        \eps N / \delta = 2\delta N.
    \end{multline*}
    Moreover, for $\gamma := \sqrt{2\delta}$ we have
    $$
    \P(\|\xi-\eta\|_{L_0^N}\ge\gamma) \le \P(|\Lambda_\gamma|\ge \gamma N) \le
    \frac{\E|\Lambda_\gamma|}{\gamma N} \le \frac{\E|\Lambda_\delta|}{\gamma N}
    \le \frac{2\delta N}{\gamma N} = \gamma.
    $$
    Hence $\E\|\xi-\eta\|_{L_0^N} \le 2\gamma$.

    The proof of the second inequality is analogous.
\end{proof}

\begin{corollary}
    \label{cor_indep_l0}
    In the conditions of Theorem~\ref{th_l0}, we have
    $$
    d_n^{\mathrm{avg}}(\{\xi_1,\ldots,\xi_N\},L_0)\ge\delta\quad\mbox{if $n\le
    \delta N$.}
    $$
\end{corollary}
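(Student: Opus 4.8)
The plan is to read off the corollary from the exponential concentration estimate of Theorem~\ref{th_l0} together with the comparison Lemma~\ref{lem_equiv_l0}. Let $\delta_0=\delta_0(\eps)>0$ be the constant furnished by Theorem~\ref{th_l0} (which we may take $\le1$), so that for every subspace $Q_n\subset\R^N$ with $\dim Q_n\le n\le\delta_0N$ one has $\P(\rho(\xi,Q_n)_{L_0^N}\le\delta_0)\le2\exp(-\delta_0N)$. The first step is to turn this tail bound into a lower bound for the expectation: since $\rho(\xi,Q_n)_{L_0^N}\ge0$ and exceeds $\delta_0$ with probability at least $1-2\exp(-\delta_0N)$, we get $\E\rho(\xi,Q_n)_{L_0^N}\ge\delta_0\bigl(1-2\exp(-\delta_0N)\bigr)$, which is $\ge\delta_0/2$ as soon as $N\ge N_0:=\lceil\ln4/\delta_0\rceil$. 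Taking the infimum over admissible $Q_n$ then yields $d_n^\avg(\xi,L_0^N)\ge\delta_0/2$ whenever $n\le\delta_0N$ and $N\ge N_0$.

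Next I would apply the left-hand inequality of Lemma~\ref{lem_equiv_l0}, which transfers the bound to the finite function system: $d_n^\avg(\{\xi_1,\ldots,\xi_N\},L_0)\ge c\,(\delta_0/2)^4=:\delta_1>0$ for all $n\le\delta_0N$ with $N\ge N_0$. It remains to repackage this as a single constant valid for every $N$. Put $\delta:=\min\bigl(\delta_1,\eps,1/(2N_0)\bigr)$; since $\delta\le\delta_1<\delta_0$, the hypothesis $n\le\delta N$ is stronger than $n\le\delta_0N$, so for $N\ge N_0$ the previous step gives $d_n^\avg(\{\xi_1,\ldots,\xi_N\},L_0)\ge\delta_1\ge\delta$. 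For the finitely many $N<N_0$ the hypothesis forces $n\le\delta N<\delta N_0\le1/2$, hence $n=0$ and the only admissible subspace is $\{0\}$; here $d_0^\avg(\{\xi_1,\ldots,\xi_N\},L_0)=\frac1N\sum_i\|\xi_i\|_{L_0}\ge\frac1N\sum_i\inf_{c\in\R}\|\xi_i-c\|_{L_0}\ge\eps\ge\delta$, so the statement holds trivially there as well, completing the argument.

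I do not expect a real obstacle here: this is essentially the assembly of two facts already proved. The only points requiring attention are the passage from a probability estimate to an expectation — handled by the crude cutoff $N\ge N_0$ and the nonnegativity of the error — and the quartic loss incurred by Lemma~\ref{lem_equiv_l0}, which degrades only the numerical value of $\delta(\eps)$ and not the qualitative conclusion. One could avoid discussing small $N$ by stating the corollary for large $N$ only, but the $n=0$ observation dispatches that range cleanly.
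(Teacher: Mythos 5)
Your argument is correct and is essentially the proof the paper intends: convert the exponential tail bound of Theorem~\ref{th_l0} into a lower bound on $\E\rho(\xi,Q_n)_{L_0^N}$, hence on $d_n^\avg(\xi,L_0^N)$, and then transfer it to the function system via the left-hand inequality of Lemma~\ref{lem_equiv_l0}, shrinking $\delta$ as needed (your $n=0$ treatment of small $N$ is a fine way to make the constant uniform). The only cosmetic point is that you should include $\delta_0$ itself in the final minimum defining $\delta$, since $\delta_1<\delta_0$ is not automatic for an arbitrary constant $c$ in Lemma~\ref{lem_equiv_l0}; this changes nothing in substance.
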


\subsection{Random sets}

Let $A$ be a signum matrix, i.e. a matrix with $\pm1$ entries. Define the
signum rank $\rank_\pm A$ is the minimal rank of matrices $B$ with $\sign
B_{i,j}\equiv A_{i,j}$ ($B_{i,j}\ne0$). 
There is is a bound on the number of $N\times N$ signum matrices with
low signum rank via~\cite{AFR85}, see also~\cite{AMY16,Mal22}:
$$
|\{A\in\{-1,1\}^{N\times N}\colon \rank_\pm A \le r\}| \le (4eN/r)^{2rN}.
$$
For $r=cN$ we obtain the bound $2^{bN^2}$, where $b=b(c)\to0$ if $c\to0$.
Hence if $c$ is small there are very few signum matrices such that
$\rank_\pm A\le cN$. Moreover, almost all signum matrices are far away from such
matrics in Hamming metric. Hence almost all signum matrics are far away from
low-rank matrices, $\rank B\le cN$, in the $L_0$ distance.
We obtain the following.

\begin{statement}
    Let $\mathbf{\mathcal E}$ be a random matrix with independent $\pm1$
    entries. Then
    $$
    \P(\min_{\rank B\le c N}\|\mathbf{\mathcal E}-B\|_{L_0^{N\times
    N}}\le c)\le 2\exp(-c N^2).
    $$
\end{statement}

Let $D_N(a,b)$ be the subset of $L(a,b)$ of functions that are piece-wise
constant on intervals $(a + (b-a)\frac{j-1}{N},a + (b-a)\frac{j}N)$,
$j=1,\ldots,N$. From the previous Statement we obtain:

\begin{statement}
    Let $\mathbf{\varphi}_1,\ldots,\mathbf{\varphi}_N$ be a random system in
    $D_N(0,1)$,
    $\left.\varphi_k\right|_{(\frac{j-1}N,\frac{j}N)}=\eps_{k,j}$, where
    $\eps_{k,j}$ are independent random signs. Then with probability at least
    $1-2\exp(-cN^2)$ we have
    $$
    d^\avg_n(\{\mathbf{\varphi}_1,\ldots,\mathbf{\varphi}_N\},L_0)\ge
    c,\quad\mbox{if $n\le cN$.}
    $$
\end{statement}

Note that although we use independent variables in the definition of
$\varphi_k$, the system $\{\varphi_k\}_1^N$ is by no means independent.

\begin{proof}
    Let us prove that if $\{\varphi_k\}$ is not rigid in $L_0(0,1)$, then the
    matrix $\Phi := (\varphi_{k,j})$ may be well approximated in
    $L_0^{N\times N}$; this will contradict the previous Statement.

    Consider an approximation
    $\varphi_k\approx g_k$ in $L_0$ by functions in some $n$-dimensional
    subspace. There is a technical subtlety: we can't claim that
    $g_k\in D_N$ (one can't average over segments because such operator is not
    defined in $L_0$).
    Instead we use Lemma~\ref{lem_equiv_l0}: it shows that the rigidity of the
    system $\xi_k := \varphi_k$ is equivalent of the rigidity of the vector
    $(\xi_1,\ldots,\xi_N)$ hence it is determined by the distribution of this
    vector. So we can assume that the measure of the space $(0,1)$ where
    everything is defined, consists of ``atoms'' $((j-1)/N,j/N)$.

    Finally, if $\varphi_k\approx g_k$, $g_k\in D_N$ and the average of
    $\|\varphi_k - g_k\|_{L_0}$ is less than $\delta$, we have
    $\|\varphi_k-g_k\|_{L_0}\le \gamma := \delta^{1/2}$ for all except at most
    $\gamma N$ indices. Therefore, $\|\Phi-G\|_{L_0^{N\times N}} \le
    2\gamma$.
\end{proof}

\begin{corollary}
    There exists a orthonormal system $\varphi_1,\ldots,\varphi_N\in D_N(0,1)$ that is rigid in $L_0$:
    $$
    d_n^{\mathrm{avg}}(\{\varphi_1,\ldots,\varphi_N\}, L_0(0,1)) \ge c,
    \quad\mbox{if $n\le cN$.}
    $$
\end{corollary}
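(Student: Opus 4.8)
The plan is to use the probabilistic method with a Haar-distributed random orthogonal matrix. Let $U$ be uniform on the orthogonal group $O(N)$ and let $\varphi_k\in D_N(0,1)$ be the step function equal to $\sqrt N\,U_{k,j}$ on $(\tfrac{j-1}N,\tfrac jN)$, $j=1,\dots,N$. Then $\langle\varphi_k,\varphi_l\rangle_{L_2(0,1)}=\sum_{j=1}^N U_{k,j}U_{l,j}=(UU^\top)_{k,l}=\delta_{k,l}$, so the system $\{\varphi_1,\dots,\varphi_N\}$ is orthonormal for every $U$; it remains to show that it is rigid in $L_0$ with positive probability. Following the reduction in the proof of the preceding Statement on random $D_N$-systems (which invokes Lemma~\ref{lem_equiv_l0} to pass to the atomic model), it suffices to produce, with positive probability, a value matrix $\Phi:=\sqrt N\,U$ that is $L_0^{N\times N}$-far from every matrix of rank $\le cN$: if $\|\Phi-G\|_{L_0^{N\times N}}\ge\beta$ for all such $G$, then $d_n^{\avg}(\{\varphi_1,\dots,\varphi_N\},L_0(0,1))\ge c(\beta)>0$.

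The counting side is parallel to the bound $|\{A\in\{-1,1\}^{N\times N}:\rank_\pm A\le r\}|\le(4eN/r)^{2rN}$ used before. Assume $\|\Phi-G\|_{L_0^{N\times N}}\le\delta$ with $\rank G\le cN$. Each row $U_i$ is marginally uniform on the unit sphere, so for a suitable constant $M_0=M_0(\delta)$ the probability that a fixed row has more than $\delta N$ entries of modulus $>M_0$ is exponentially small; hence, with probability $\to1$, at most $\delta N^2$ entries of $\Phi$ exceed $M_0$ in modulus. Truncating the entries of $G$ of modulus $>M_0$ to $\pm M_0$ alters $G$ on at most $2\delta N^2$ entries, so $\Phi$ is $L_0^{N\times N}$-close to an $M_0$-bounded matrix $\tilde G$ agreeing off at most $2\delta N^2$ entries with a rank-$\le cN$ matrix. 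A $\delta$-net of $M_0$-bounded rank-$\le cN$ matrices has size $\exp\!\big(O(cN^2\log\tfrac{M_0}\delta)\big)$, and the choice of the $\le2\delta N^2$ modified positions and their net-values contributes a further $\exp\!\big(O(\delta\log\tfrac{M_0}\delta\cdot N^2)\big)$; in all there are at most $\exp(b(c,\delta)N^2)$ candidate matrices $H$, where for any fixed $\delta$ one has $b(c,\delta)\to0$ as $c\to0$.

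The heart of the argument is the matching probabilistic estimate: for a fixed $M_0$-bounded $H$,
$$
\Pr_U\big(\|\Phi-H\|_{L_0^{N\times N}}\le\delta\big)\le\exp(-c'(\delta)N^2),\qquad c'(\delta)>0.
$$
Generate the rows of $U$ one at a time: conditionally on $U_1,\dots,U_{k-1}$ the row $U_k$ is uniform on the unit sphere of the $(N-k+1)$-dimensional space $V_{k-1}^{\perp}$. If $\|\Phi-H\|_{L_0^{N\times N}}\le\delta$ then, for all but $\sqrt\delta N$ of the rows, $\sqrt N\,U_k$ agrees with $H_k$ within $\delta$ on at least $(1-\sqrt\delta)N$ coordinates; in particular this happens for at least $(\tfrac12-\sqrt\delta)N$ of the rows with $k\le N/2$, each of which has $\dim V_{k-1}^{\perp}\ge N/2$. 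The technical lemma I would establish is: for every subspace $W$ with $\dim W\ge N/2$ and every $M_0$-bounded vector $h$, a uniform point $w$ on the unit sphere of $W$ satisfies
$$
\Pr\big(\sqrt N\,w\ \text{agrees with}\ h\ \text{within}\ \delta\ \text{on}\ \ge(1-\sqrt\delta)N\ \text{coordinates}\big)\le e^{-c''(\delta)N},
$$
uniformly in $W$. Indeed, such a $w$ must, with overwhelming probability, keep almost all of its $\ell_2$-mass outside any block of $\sqrt\delta N$ coordinates (a uniform point on a sphere of dimension $\ge N/2$ cannot concentrate a constant fraction of its mass on $o(N)$ coordinates — a Beta–type large deviation bound for $\langle M_S a,a\rangle$, with $w=\sum a_\ell f_\ell$ in an orthonormal basis of $W$ and $M_S$ the Gram matrix of the relevant coordinate block, union-bounded over blocks), and on that event agreement with $h$ within $\delta$ on $(1-\sqrt\delta)N$ coordinates forces $w$ into a spherical cap of $W$ of Euclidean radius $<\sqrt2$, hence of normalized measure $e^{-\Omega(\dim W)}$, there being only subexponentially many candidate blocks. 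Given the lemma, the sequential structure makes the conditional probabilities multiply, and summing over the $\binom{N/2}{(1/2-\sqrt\delta)N}\le 2^{N/2}$ choices of which early rows are ``good'' gives $\Pr_U(\|\Phi-H\|_{L_0^{N\times N}}\le\delta)\le 2^{N/2}e^{-c''(\delta)N(1/2-\sqrt\delta)N}\le e^{-c'(\delta)N^2}$.

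Finally, fix $\delta$ small and then $c$ small enough that $b(c,\delta)<c'(\delta)$; the union bound over the $\exp(b(c,\delta)N^2)$ candidates shows that with probability $1-\exp(-(c'(\delta)-b(c,\delta))N^2)\to1$ the matrix $\Phi=\sqrt N\,U$ is $L_0^{N\times N}$-far from every matrix of rank $\le cN$, and any such $U$ yields the required rigid orthonormal system in $D_N(0,1)$. I expect the real obstacle to be the single-row estimate of the third paragraph: one has to control the conditional law of a row of a Haar orthogonal matrix uniformly over the arbitrary subspace spanned by the previous rows, and in particular to rule out that the few coordinates on which the row may deviate from the fixed target absorb a constant fraction of its $\ell_2$-energy — which is exactly what would prevent the relevant spherical cap from being exponentially small.
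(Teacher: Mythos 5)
Your construction (Haar orthogonal $U$, step functions with values $\sqrt N\,U_{k,j}$) is a genuinely different route from the paper's, and the reduction to showing that $\sqrt N\,U$ is $L_0^{N\times N}$-far from rank-$\le cN$ matrices, as well as the single-row cap/non-concentration lemma and the sequential conditioning, are sound in outline. The genuine gap is in the counting step. You assert that a $\delta$-net (in entrywise accuracy, which is what the $L_0$ triangle inequality requires) of the $M_0$-bounded matrices of rank $\le cN$ has size $\exp\bigl(O(cN^2\log\tfrac{M_0}{\delta})\bigr)$, but you give no construction, and the natural one does not deliver this: writing $G=BC$ with $B\in\R^{N\times r}$, $C\in\R^{r\times N}$ and netting the factors, the entrywise error of the product picks up the norms of the rows of $B$ and columns of $C$, which for $r=cN$ are only polynomially bounded in $N$; the resulting net has size $\exp\bigl(O(cN^2\log N)\bigr)$. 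That extra $\log N$ overwhelms your probability bound $\exp(-c'(\delta)N^2)$ unless $c\lesssim 1/\log N$, i.e.\ you would only get rigidity for $n\lesssim N/\log N$, not $n\le cN$ as claimed. The fix is to count quantization patterns rather than build a metric net --- a Warren-type bound on sign (or grid-cell) patterns of the degree-two polynomials $(BC)_{ij}$ in the $2Nr$ factor variables gives $\exp\bigl(O(cN^2\log\tfrac{M_0}{c\delta})\bigr)$ candidates with no $\log N$; this is exactly the mechanism behind the bound $|\{A\colon\rank_\pm A\le r\}|\le(4eN/r)^{2rN}$ from \cite{AFR85} that the paper invokes. (A smaller inaccuracy: your claim that the total candidate count is $\exp(b(c,\delta)N^2)$ with $b\to0$ as $c\to0$ ignores the $\delta$-dependent contribution $O\bigl(\delta\log\tfrac{M_0}{\delta}\bigr)N^2$ from the truncated positions; it happens to be dominated by $c'(\delta)$, but this needs to be checked, since $c'(\delta)$ itself degrades like $1/\log(1/\delta)$ through $M_0$.)

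For comparison, the paper avoids all of this by \emph{not} demanding orthonormality of the random system at the outset: it takes random values $\pm\delta$ on $(0,1)$ (rigid with high probability by the preceding Statement, which rests precisely on the sign-rank count above), checks that $\max_{|a|=1}\|\sum a_k\varphi_k\|_{L_2(0,1)}\le1$ because the spectral norm of a random sign matrix is $O(\sqrt N)$, and then completes the system to an orthonormal one on $(1,2)$ via the extension criterion in~\cite{KS}, placing the continuations in $D_N(1,2)$. This completion trick is the idea your proposal is missing; it trades your hard Haar-matrix analysis for an already-proved rigidity statement plus a soft extension argument (at the cost of working on a doubled interval, whereas your construction, if the counting step were repaired, would live directly in $D_N(0,1)$).
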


\begin{proof}
    We will construct a system $\varphi_1,\ldots,\varphi_N\in
    D_{2N}(0,2)$ instead.  Define $\varphi_1,\ldots,\varphi_N$ on $(0,1)$ using random values
    $\left.\varphi_i\right|_{(\frac{j-1}N,\frac{j}N)} = \pm \delta$ with some
    small $\delta$ to establish $L_0$-rigidity with high probability. Then
    continue this system on $(0,2)$ to make it orthonormal. Necessary and
    sufficient condition of the existence of such continuation is known (see,
    e.g.,~\cite[Ch.8]{KS}):
    $$
    \max_{|a|=1}\|\sum_{k=1}^N a_k\varphi_k\|_{L_2(0,1)} \le 1.
    $$
    If $\delta$ is small enough, this condition holds true with high
    probability, because spectral norm of a random $\pm1$ matrix is
    $O(N^{1/2})$, see, e.g.~\cite[Ch.4]{HDPBook}.

    Moreover, we can place the functions $\left.\varphi_k\right|_{(1,2)}$ in any
    $N$-dimensional space (keeping $\int_1^2\varphi_k\varphi_l$), e.g., in $D_N(1,2)$.
\end{proof}

\subsection{Lacunary systems}

Let $\lambda>1$. We say that a sequence of positive integers $k_1,\ldots,k_N$ is
$\lambda$-lacunary, if $k_{j+1}/k_j\ge\lambda$ for $j=1,\ldots,N-1$.

\begin{statement}
    \label{stm_lacunary}
    Let $\varphi$ be a Riemann integrable function on $\T$ and $\mu\{x\in\T\colon \varphi(x)\ne c\}>0$
    for any $c\in\R$.
    \begin{enumerate}
        \item There exists $\lambda=\lambda(\varphi)>1$ such
            that for any $\lambda$-lacunary sequence $k_1,\ldots,k_N$ we have
            $$
            d_n^{\mathrm{avg}}(\{\varphi(k_1x),\ldots,\varphi(k_Nx)\},L_0(\T))\ge
            c(\varphi),\quad\mbox{if $n\le c(\varphi) N$}.
            $$
        \item For any $\eps>0$ there exists $\lambda=\lambda(\varphi,\eps)>1$
            such that for any $\lambda$-lacunary sequence $k_1,\ldots,k_N$,
            we have
            $$
            d_n^{\mathrm{avg}}(\{\varphi(k_1x),\ldots,\varphi(k_Nx)\},L_1(\T))\ge
            c(\varphi,\eps),\quad\mbox{if $n\le N(1-\eps)$}.
            $$
    \end{enumerate}
\end{statement}

We will use the following quantitative result.

\begin{exttheorem}[\cite{B}, Theorem 4.3]
    \label{ext_lacun}
    Let $(k_j)$ be an increasing sequence of positive integers.
    Then there exists a sequence $(g_j)$ of measurable functions on $(0,1)$, independent and
    uniformly distributed over $(0,1)$, such that
    $$
    \mu\{x\in[0,1]\colon |\{k_j x\}-g_j(x)|>2k_j/k_{j+1}\}\le 2k_j/k_{j+1},\quad
    j=1,2,\ldots.
    $$
\end{exttheorem}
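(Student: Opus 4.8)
The plan is to deduce both parts from the rigidity of genuinely independent systems already established --- Theorem~\ref{th_l0} in $L_0$ and Corollary~\ref{cor_indep_l1} in $L_1$ --- after replacing the lacunary vector by an independent one via the coupling of Theorem~\ref{ext_lacun}. Fix $\varphi$ as in the statement, put $M:=\|\varphi\|_\infty<\infty$, and let $k_1,\ldots,k_N$ be $\lambda$-lacunary. By $1$-periodicity $\varphi(k_jx)=\varphi(\{k_jx\})$, and since $x\mapsto\{k_jx\}$ preserves Lebesgue measure on $\T$, every $\varphi(k_jx)$ has the law of $\varphi(U)$ with $U$ uniform on $[0,1)$. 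I would invoke Theorem~\ref{ext_lacun} to obtain independent uniform $g_1,\ldots,g_N$ on $(0,1)$ with $\mu\{x\colon|\{k_jx\}-g_j(x)|>\eps_j\}\le\eps_j$, $\eps_j:=2k_j/k_{j+1}\le 2/\lambda$, and set $\zeta_j:=\varphi(g_j)$; these are independent copies of $\varphi(U)$. I would also record once and for all that ``$\mu\{\varphi\ne c\}>0$ for all $c$'' upgrades to the quantitative bounds $\eps_0(\varphi):=\inf_c\|\varphi-c\|_{L_0}>0$ and $\nu(\varphi):=\int_\T|\varphi-m|>0$, $m:=\int_\T\varphi$: the latter is immediate, and for the former one notes that $c\mapsto\|\varphi-c\|_{L_0}=d_{L_0}(\varphi,c)$ is continuous, strictly positive, and bounded below by $\min(M,1)$ as soon as $|c|>2M$, so its infimum over the compact set $\{|c|\le 2M\}$ is positive.

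The crux is to show that the vectors $\xi:=(\varphi(k_1x),\ldots,\varphi(k_Nx))$ and $\zeta:=(\varphi(g_1),\ldots,\varphi(g_N))$ are close in the averaged $L_1^N$ (hence $L_0^N$) metric once $\lambda$ is large. Let $\Delta_j(x):=|\varphi(\{k_jx\})-\varphi(g_j(x))|\le 2M$ and let $\omega(t,h):=\sup_{|s|<h}|\varphi(t+s)-\varphi(t)|$ be the local oscillation of $\varphi$ on the torus. Riemann integrability forces the discontinuity set of $\varphi$ to be null, so $\omega(t,h)\downarrow0$ for a.e.\ $t$ as $h\downarrow0$, whence $\int_0^1\omega(t,h)\,dt\to0$ by dominated convergence. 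On the good event $\{|\{k_jx\}-g_j(x)|\le\eps_j\}$ (of probability $\ge1-\eps_j$) one has $\Delta_j(x)\le\omega(\{k_jx\},h)$ provided $\eps_j<h$; hence, using that $\{k_jx\}$ is uniform, $\E_x\Delta_j(x)\le 2M\eps_j+\int_0^1\omega(t,h)\,dt\le 4M/\lambda+\int_0^1\omega(t,h)\,dt$. Therefore, given any $\theta>0$, I would first pick $h=h(\varphi,\theta)$ with $\int_0^1\omega(\cdot,h)<\theta/2$ and then $\lambda_0=\lambda_0(\varphi,\theta)$ with $2/\lambda_0<h$ and $4M/\lambda_0<\theta/2$; for $\lambda\ge\lambda_0$ this gives $\E_x\|\xi-\zeta\|_{L_1^N}=\frac1N\sum_j\E_x\Delta_j(x)\le\theta$ and, using $\|\cdot\|_{L_0^N}\le\|\cdot\|_{L_1^N}^{1/2}$ together with Jensen's inequality, also $\E_x\|\xi-\zeta\|_{L_0^N}\le\sqrt\theta$.

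It then remains to feed these estimates into the subadditivity $d_n^\avg(\zeta,Y)\le d_n^\avg(\xi,Y)+\E\|\xi-\zeta\|_Y$ (the $n_2=0$ case of~\eqref{sum_width}; it is valid for $Y=L_0^N$ as well, the $L_0^N$-metric being subadditive). For part~1: applying Theorem~\ref{th_l0} to $\zeta$ with $\eps=\eps_0(\varphi)$ gives $\delta_0=\delta_0(\varphi)$ with $\P(\rho(\zeta,Q_n)_{L_0^N}\le\delta_0)\le 2\exp(-\delta_0N)$ for $\dim Q_n\le\delta_0N$, hence $d_n^\avg(\zeta,L_0^N)\ge\delta_0/2$ once $N$ is large; choosing $\theta=(\delta_0/4)^2$ in the coupling estimate yields $d_n^\avg(\xi,L_0^N)\ge\delta_0/4$, and Lemma~\ref{lem_equiv_l0} finally gives $d_n^\avg(\{\varphi(k_jx)\},L_0)\ge c(\delta_0/4)^4=:c(\varphi)$, with $\lambda\ge\lambda_0(\varphi)$. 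For part~2: the variables $(\varphi(g_j)-m)/\nu(\varphi)$ are independent, centered and of $L_1$-norm $1$, so Corollary~\ref{cor_indep_l1} gives $d_n^\avg(\{\varphi(g_j)\},L_1)\ge\nu(\varphi)c(\eps/2)$ for $n\le N(1-\eps)$ and $N$ large (passing between $\{\varphi(g_j)\}$ and its centering changes the approximating dimension by at most one, which I absorb by replacing $\eps$ with $\eps/2$); then Statement~\ref{stm_equiv} and the $\ell_1^N$ subadditivity give $d_n^\avg(\{\varphi(k_jx)\},L_1)\ge\nu(\varphi)c(\eps/2)-\frac1N\sum_j\E_x\Delta_j(x)\ge\nu(\varphi)c(\eps/2)-\theta$, so taking $\theta=\frac12\nu(\varphi)c(\eps/2)$ finishes it, with $\lambda\ge\lambda_0(\varphi,\eps)$. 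In both parts the finitely many remaining small values of $N$ are disposed of by further shrinking the final constant so that $n\le cN$ forces $n=0$, where the width equals $\frac1N\sum_j\|\varphi(k_jx)\|\ge\eps_0(\varphi)$ (resp.\ $\ge\nu(\varphi)$).

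The step I expect to be the real obstacle is the oscillation estimate of the second paragraph: almost-sure proximity of $\{k_jx\}$ and $g_j(x)$ does \emph{not} entail proximity of $\varphi$ at those points --- this is precisely the regime of the $L_0$/Hamming correction theorems --- and the fix is to average $\omega(\cdot,h)$ against the uniform law of $\{k_jx\}$, exploiting that $\mathrm{osc}_\varphi=0$ a.e. This is the only place where Riemann integrability enters and where the thresholds $\lambda_0(\varphi,\cdot)$ are produced; the rest is bookkeeping with the width inequalities already at hand.
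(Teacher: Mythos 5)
Your proposal does not prove the statement in question. Theorem~\ref{ext_lacun} is the coupling result of Berkes (\cite{B}) asserting the \emph{existence} of independent, uniformly distributed functions $g_j$ on $(0,1)$ that approximate the fractional parts $\{k_jx\}$ with error $2k_j/k_{j+1}$ both in value and in measure; the paper imports it precisely so that it can be fed into the rigidity argument. Your very first step is ``I would invoke Theorem~\ref{ext_lacun} to obtain independent uniform $g_1,\ldots,g_N$'' --- that is, you assume the statement you were asked to prove, and what you then establish is its downstream application, Statement~\ref{stm_lacunary}. The argument is therefore circular with respect to the assigned statement, and nothing in it touches the actual content of Theorem~\ref{ext_lacun}: the construction of the $g_j$ (e.g.\ by modifying $\{k_jx\}$ on each interval of constancy of $\lfloor k_jx\rfloor$, where $\{k_{j+1}x\}$ sweeps through about $k_{j+1}/k_j$ full periods, so that the conditional law of $g_{j+1}$ given $g_1,\ldots,g_j$ becomes exactly uniform at a cost of order $k_j/k_{j+1}$), the verification of exact independence and uniformity, and the measure bound on the exceptional set. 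None of these can be recovered from the rigidity machinery you deploy.

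As a secondary remark, the material you did write is essentially the paper's own proof of Statement~\ref{stm_lacunary}: the same splitting of $\int_{\T}|\varphi(k_jx)-\varphi(g_j(x))|\,dx$ over the bad set of measure at most $2/\lambda$ and its complement, with the latter controlled by the averaged modulus of continuity $\tau(\varphi,4/\lambda)\to 0$ for Riemann integrable $\varphi$, followed by Corollary~\ref{cor_indep_l1} and Corollary~\ref{cor_indep_l0}. That part is sound and even somewhat more carefully quantified than the paper's version, but it is a proof of a different result, conditional on the one you were supposed to supply.
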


Now can prove our Statement.
\begin{proof}
    Let us start with the $L_1$ case.
    First of all, we can shift and renorm $\varphi$ to obtain
    $\int_\T\varphi(x)\,dx=0$, $\int_\T|\varphi(x)|\,dx=1$ (this does not
    affect rigidity).
    Given a $\lambda$-lacunary sequence
    $k_1,\ldots,k_N$ with some $\lambda>1$, we apply Theorem~\ref{ext_lacun} to obtain
    functions $g_1,\ldots,g_N$ with $\|\{k_j x\}-g_j(x)\|_{L_0} \le 2/\lambda$.
    
    Put $f_j(x):=\varphi(g_j(x))$. Then the functions $f_j$ are independent,
    $\int f_j=0$, $\int|f_j|=1$.
    Corollary~\ref{cor_indep_l1} implies the rigidity of $\{f_j\}$. To prove
    the rigidity of $\{\varphi(k_jx)\}$ we have to bound
    $\|\varphi(k_jx)-f_j\|_1$:
    $$
    \int_\T|\varphi(k_jx)-f_j(x)|\,dx =
    \int\limits_A|\varphi(k_jx)-\varphi(g_j(x))|\,dx +
    \int\limits_{\T\setminus A}|\varphi(k_jx)-\varphi(g_j(x))|\,dx,
    $$
    where $A=\{x\colon |\{k_j x\}-g_j(x)|>2/\lambda\}$.
    The first integral is at most $2\|\varphi\|_\infty\mu(A) \le
4\|\varphi\|_\infty/\lambda$. The second integral does not exceed
$$
\int_{\mathbb T}\sup_{|x-t|\le 2/\lambda}|\varphi(t)-\varphi(x)|\,dx =:
\tau(\varphi,4/\lambda),
$$
    where $\tau$ is the averaged moduli of continuity. It is known
    that $\lim_{h\to 0}\tau(\varphi,h)=0$ for Riemann-integrable functions.
    Hence, for large enough $\lambda$ the value of
$\|\varphi(k_jx)-f_j(x)\|_1$ will be sufficiently small.
This proves the $L_1$ case.

    The $L_0$ case also follows because $\|\varphi(k_jx)-f_j(x)\|_{L_0} \le
    \|\varphi(k_jx)-f_j(x)\|_1^{1/2}$ and we can use
    Corollary~\ref{cor_indep_l0}.
\end{proof}

\subsection{Complemented subspaces}

In this paragraph we will outline another method for the proof of rigidity, that
is based on the notion of complementability.

Let $f_1,\ldots,f_N$ be an orthonormal system in $L_2(0,1)$. Suppose that we aim
to prove rigidity of $\{f_j\}$ in a larger space $X\supset L_2$.
If there exists a bounded linear operator $\pi\colon X\to L_2$ that keeps the
system: $\pi f_j=f_j$, $j=1,\ldots,N$, then we obtain the rigidity in $X$:
$$
\|\pi\|_{X\to L_2} \cdot d_n^\avg(\{f_1,\ldots,f_N\},X)_2 \ge
d_n^\avg(\{f_1,\ldots,f_N\},L_2)_2 = (1-n/N)^{1/2}.
$$

Let us give a corollary:
\begin{statement}
    \label{stm_chaos}
Let $\{f_j\}_1^\infty$ be an orthonormal system in $X\supset L_2$
and two conditions are satisfied:
\begin{enumerate}
    \item[(1)] the system spans an $\ell_2$ subspace:
        $\|\sum_{j\ge 1}c_jf_j\|_X\asymp \|\sum_{j\ge1}c_jf_j\|_2$,
    \item[(2)] the system in complemented, i.e. there is a bounded projector $\pi\colon
        X\twoheadrightarrow F_X := \overline{\mathrm{span}}\,\{f_j\}_{j=1}^\infty$.
\end{enumerate}
    Then
    $$
    d_n^\avg(\{f_{i_1},\ldots,f_{i_N}\},X)_2\ge c\sqrt{1-n/N},\quad\mbox{where
    $c=c(F_X,X)>0$},
    $$
    for any $i_1<i_2<\ldots<i_N$.
\end{statement}

\begin{corollary}
    The Rademacher chaos $\{r_ir_j\}_{1\le i<j<\infty}$ is rigid in $L\log L$:
    for any set of pairs $\Lambda$ we have
    $$
    d_n^\avg(\{r_ir_j\}_{(i,j)\in\Lambda}, L\log L)_2 \ge c(\eps)>0\quad\mbox{if
    $n<|\Lambda|(1-\eps)$.}
    $$
\end{corollary}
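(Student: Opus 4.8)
The plan is to read the corollary off Statement~\ref{stm_chaos}, taking $X=L\log L(0,1)$ and letting $\{f_j\}_{j\ge1}$ be any enumeration of the second-order chaos elements $\{r_ir_j\}_{1\le i<j}$. A finite set of pairs $\Lambda$ is then a subsystem indexed by some $i_1<\dots<i_N$ with $N=|\Lambda|$, and Statement~\ref{stm_chaos} yields $d_n^\avg(\{r_ir_j\}_{(i,j)\in\Lambda},L\log L)_2\ge c\sqrt{1-n/N}$; since $n<N(1-\eps)$ forces $1-n/N>\eps$, this is $\ge c\sqrt\eps=:c(\eps)>0$. So everything reduces to verifying hypotheses~(1) and~(2) of Statement~\ref{stm_chaos} for the chaos $\{r_ir_j\}$ inside $L\log L$.

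For hypothesis~(1) --- that the chaos spans an $\ell_2$-subspace of $L\log L$ --- I would use hypercontractivity. For $g=\sum_{i<j}c_{ij}r_ir_j$ one has the degree-$2$ estimate $\|g\|_q\le(q-1)\|g\|_2$ for all $q\ge2$; in particular $\|g\|_4\le 3\|g\|_2$, and then the interpolation inequality $\|g\|_2\le\|g\|_1^{1/3}\|g\|_4^{2/3}$ gives $\|g\|_2\le 9\|g\|_1$. On a probability space $\|g\|_1\le\|g\|_{L\log L}\le C\|g\|_2$ (the second embedding because the Young function $t\log(e+t)$ is dominated by a multiple of $t+t^2$), so $\|g\|_{L\log L}\asymp\|g\|_2=(\sum c_{ij}^2)^{1/2}$ with absolute constants; this equivalence passes to infinite combinations, so $F_X:=\overline{\mathrm{span}}\,\{r_ir_j\}$ in $L\log L$ is exactly the set of $\sum c_{ij}r_ir_j$ with $\sum c_{ij}^2<\infty$ and is isomorphic to $\ell_2$.

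For hypothesis~(2) --- complementation --- the natural candidate is $\pi f:=\sum_{i<j}\widehat f(\{i,j\})\,r_ir_j$ with $\widehat f(\{i,j\}):=\E(f\,r_ir_j)$, which makes sense for $f\in L\log L\subset L_1$; it fixes each $r_ir_j$ and is idempotent, so once it is bounded it is a projection of $L\log L$ onto $F_X$. By~(1) it suffices to show $\sum_{i<j}\widehat f(\{i,j\})^2\le C^2\|f\|_{L\log L}^2$. Fix $n$, set $P^{(n)}f:=\sum_{i<j\le n}\widehat f(\{i,j\})r_ir_j$, and (assuming $P^{(n)}f\ne0$) put $g_n:=P^{(n)}f/\|P^{(n)}f\|_2$, a normalised degree-$2$ chaos. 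A direct computation gives $\E(f g_n)=\|P^{(n)}f\|_2$, and the Orlicz--H\"older inequality for the complementary pair $(L\log L,\exp L)$ gives $\|P^{(n)}f\|_2=\E(f g_n)\le 2\|f\|_{L\log L}\,\|g_n\|_{\exp L}$. Finally, by the same hypercontractive bound $\|g_n\|_q\le(q-1)\|g_n\|_2=q-1$ for $q\ge2$, which is precisely the statement that $\|g_n\|_{\exp L}\le C$ uniformly in $n$ (recall $\|h\|_{\exp L}\asymp\sup_{q\ge1}q^{-1}\|h\|_q$). Hence $\|P^{(n)}f\|_2\le C'\|f\|_{L\log L}$ for every $n$, and letting $n\to\infty$ gives the square-function bound; thus $\pi$ is a bounded projection onto $F_X$.

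The main obstacle is hypothesis~(2): in $L_1$ even the Rademacher system itself is uncomplemented, so it is essential that $L\log L$ is strictly smaller, and the quantitative input that makes the argument go through is exactly the interplay between the hypercontractive inequality $\|g\|_q\le(q-1)\|g\|_2$ (which places a normalised second-order chaos in the Zygmund class $\exp L$ with a uniform bound) and the duality $(L\log L)^*=\exp L$. The remaining ingredients --- the Khintchine-type equivalence in~(1), the finite-rank truncation, and the final appeal to Statement~\ref{stm_chaos} --- are routine.
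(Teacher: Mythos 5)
Your argument is correct, and it shares the paper's overall skeleton --- both proofs reduce the corollary to Statement~\ref{stm_chaos} with $X=L\log L$ and read off $c\sqrt{1-n/N}\ge c\sqrt{\eps}$ --- but you verify the two hypotheses by a genuinely different route. The paper does it purely by citation: by \cite[Theorems 6.4 and 6.7]{Ast}, for a symmetric space $X$ property (1) for the chaos is equivalent to $X\supset H$ (the closure of $L_\infty$ in $\exp L$) and complementability is equivalent to $H\subset X\subset H'$; taking $X=H'$ and identifying $H'$ first with the Lorentz space $\Lambda(\psi_1)$ and then with $L\log L$ via Bennett--Rudnick \cite{BR} finishes the proof. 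You instead prove both hypotheses from scratch: the Khintchine-type equivalence $\|\sum c_{ij}r_ir_j\|_{L\log L}\asymp(\sum c_{ij}^2)^{1/2}$ via the hypercontractive bound $\|g\|_4\le3\|g\|_2$ and interpolation, and the boundedness of the orthogonal projection via the Orlicz--H\"older pairing of the complementary couple $(L\log L,\exp L)$ together with the uniform bound $\|g\|_{\exp L}\lesssim\sup_{q\ge1}q^{-1}\|g\|_q\lesssim\|g\|_2$ for degree-two chaos, passing through the truncations $P^{(n)}$ to get the square-function estimate. In effect you reprove, for the single space $X=H'=L\log L$, the easy direction of Astashkin's complementability criterion that the paper quotes. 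The paper's route is shorter given the references and carries the structural information that $L\log L$ is precisely the largest symmetric space (above $H$) where the chaos is complemented; your route is self-contained, elementary and quantitative, with explicit constants and no symmetric-space machinery. The points you leave implicit --- convergence of $\pi f$ in $L\log L$ from square-summability plus (1), idempotency of $\pi$, and that a bounded projection onto $F_X$ combined with (1) yields the bounded operator $X\to L_2$ fixing the system, which is what the discussion preceding Statement~\ref{stm_chaos} actually uses --- are all routine, so there is no gap.
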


\begin{proof}
    Let $X$ be some symmetric space. Is is known that property (1) is equivalent
    to the embedding $X\supset H$, see.~\cite[Theorem
    6.4]{Ast}; the space $H$ is the closure of $L_\infty$
    in the Orlicz space $L_{\varphi_1}$, $\varphi_1(t)=e^t-1$.
    The criterium for complementability is that $H\subset
    X\subset H'$, see~\cite[Theorem 6.7]{Ast}. Hence for $X=H'$
    both conditions are satisfied. The space $H'$ coincides with the Lorentz
    space $\Lambda(\psi_1)$ of functions with $\int_0^1 f^*(t)\log(2/t)\,dt<\infty$
    (see~\cite[\S2.2,\S2.4]{Ast}). Further, this Lorentz space coincides~(see,
    e.g.,~\cite[Theorem D]{BR}) with the space $L\log L$: $\int_0^1 |f|\log(2+|f|)\,dt<\infty$.
\end{proof}

\section{Rigidity in $L_p$, $1<p<2$}
\label{sec_lp}

\subsection{$S_{p'}$-property}
\label{subsec_lp12}

We start with a finite-dimensional result. It's proof follows the
paper~\cite{G87} of E.D.~Gluskin.

\begin{theorem}
    \label{th_p12}
    Let $1<p<2$ and $N\in\mathbb N$. Suppose that
    $\xi=(\xi_1,\ldots,\xi_N)$ is an isotropic random vector in $\R^N$:
    $\E\xi_i^2=1$, $\E\xi_i\xi_j=0$ ($i\ne j$), that satisfies two conditions:
    \begin{equation}
        \label{A_condition}
        (\E|\xi|^{2+\gamma})^{\frac1{2+\gamma}} \le AN^{\frac12}\quad\mbox{for
        some $\gamma>0$, $A\ge 1$,}
    \end{equation}
    \begin{equation}
        \label{B_condition}
        \max_{|v|=1}(\E|\langle \xi,v\rangle|^{p'})^{\frac1{p'}} \le
        B\quad\mbox{for some $B\ge1$}.
    \end{equation}
    Then for any $\eps\in(0,1)$, $n\le N(1-\eps)$ and
    any $n$-dimensional space $Q_n\subset\R^N$ we have
    $$
    \P\{\rho(\xi,Q_n)_{\ell_p^N}\ge c(A,\eps,\gamma)N^{1/p} B^{-1}\} \ge
    c(A,\eps,\gamma).
    $$
\end{theorem}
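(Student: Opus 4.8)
The plan is to dualize the width, to extract a rich combinatorial structure inside the dual body by a volumetric/entropy argument (exactly as in Lemma~\ref{lem_cube_section}), and then to use the two moment hypotheses on $\xi$ to show that $\xi$ correlates well with this structure.

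First I would dualize: for any subspace $Q_n$,
$$
\rho(\xi,Q_n)_{\ell_p^N}=\sup_{z\in K}\langle\xi,z\rangle,\qquad K:=B_{p'}^N\cap Q_n^\perp,\qquad \dim Q_n^\perp=N-n\ge\eps N .
$$
So it suffices to find, with probability bounded below, a vector $z\in K$ with $\langle\xi,z\rangle\ge c(A,\eps,\gamma)\,N^{1/p}B^{-1}$. For the geometric step, observe that $N^{-1/p'}B_\infty^N\subset B_{p'}^N$, hence $N^{-1/p'}(B_\infty^N\cap Q_n^\perp)\subset K$. Applying Vaaler's theorem (Theorem~\ref{ext_vaaler}), the standard volumetric lower bound on nets, and the Mendelson--Vershynin estimate~\eqref{mv_bound} to the central cube section $B_\infty^N\cap Q_n^\perp$ precisely as in the proof of Lemma~\ref{lem_cube_section}, one obtains a set $\Lambda\subset\{1,\dots,N\}$ with $|\Lambda|\ge c(\eps)N$ and a family $\{z^\sigma\}_{\sigma\in\{-1,1\}^\Lambda}\subset K$ with $\sigma_iz^\sigma_i\ge c(\eps)N^{-1/p'}$ for $i\in\Lambda$ and $|z^\sigma_i|\le N^{-1/p'}$ for all $i$. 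In other words $K$ contains an essentially $c(\eps)N$-dimensional ``flat'' body whose Euclidean radius is of order $N^{1/p-1/2}$; the exponent $N^{1/p'}$ appearing here reflects that a proportional central section of $B_{p'}^N$ has volume comparable to that of $B_{p'}^N$ itself.

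The probabilistic core, following Gluskin~\cite{G87}, is where the two hypotheses enter and is the main obstacle. Condition~\eqref{B_condition} together with isotropy ($\E\langle\xi,v\rangle^2=|v|^2$) pins the one-dimensional marginals of $\xi$ from both sides, so a reverse Hölder (Paley--Zygmund) inequality gives $\P(|\langle\xi,v\rangle|\ge\tfrac12|v|)\ge c(B,p)$ for every direction $v$. The combinatorial richness of $\Lambda$ is what allows one to amplify these one-dimensional estimates into a lower bound on $\sup_{z\in K}\langle\xi,z\rangle$: taking the sign pattern $\sigma$ adapted to $\xi$ one controls the main term $\sum_{i\in\Lambda}\xi_iz^\sigma_i\asymp N^{-1/p'}|\Lambda|\asymp N^{1/p}$, and the role of~\eqref{B_condition} is to keep the admissible normalization under control and to dominate the contribution $\sum_{i\notin\Lambda}\xi_iz^\sigma_i$ of the coordinates outside $\Lambda$ — it is exactly this domination that forces the loss of the factor $B^{-1}$ and makes the independence-based reasoning of Section~\ref{sec_l01} insufficient here. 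The outcome of this step is $\E_\xi\,\rho(\xi,Q_n)_{\ell_p^N}\ge c(A,\eps,\gamma)\,N^{1/p}B^{-1}$.

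Finally I would pass from the expectation bound to the probability bound by a Paley--Zygmund argument, and this is where condition~\eqref{A_condition} is used: by Hölder $\rho(\xi,Q_n)_{\ell_p^N}\le\|\xi\|_{\ell_p^N}\le N^{1/p-1/2}\|\xi\|_{\ell_2^N}$, so $(\E\rho^{2+\gamma})^{1/(2+\gamma)}\le A\,N^{1/p}$, and combining this upper bound on a higher moment with the lower bound on $\E\rho$ yields $\P(\rho(\xi,Q_n)_{\ell_p^N}\ge cN^{1/p}B^{-1})\ge c$ with $c=c(A,\eps,\gamma)$. Morally,~\eqref{A_condition} forbids $\xi$ from concentrating on a low-dimensional subset (which would make it approximable) while~\eqref{B_condition} forbids $\xi$ from concentrating along a single direction. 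I expect the genuinely hard part to be the amplification in the previous paragraph, i.e. transferring a Gaussian-width-type lower bound for the section $K$ to a random vector about which only a weak $L_{p'}$-control of one-dimensional marginals is available.
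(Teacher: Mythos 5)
There is a genuine gap at exactly the step you flag as the hard one: the amplification of the shattered structure inside $K=B_{p'}^N\cap Q_n^\perp$ into a lower bound for $\sup_{z\in K}\langle\xi,z\rangle$. In the $L_1$ argument of the paper the sign pattern $\sigma$ is chosen adapted to $\xi$, and the price of this adaptivity is precisely the error term $\sum_{i\notin\Lambda}\|\E(\xi_i\mid\{\sign\xi_j\}_{j\in\Lambda})\|_{L_1}$ in Theorem~\ref{th_l1}; that term is negligible only under independence or unconditionality, hypotheses which are absent in Theorem~\ref{th_p12}. Under isotropy plus~\eqref{B_condition} alone, your cross term $\sum_{i\notin\Lambda}\xi_i z_i^{\sigma}$ with $\sigma$ depending on $\xi$ is a supremum of $2^{|\Lambda|}=2^{c(\eps)N}$ linear forms $\langle\xi,v\rangle$ with $|v|\lesssim N^{1/p-1/2}$, and condition~\eqref{B_condition} only controls the $p'$-th moment of each individual form; a union bound then costs a factor $2^{c(\eps)N/p'}$, which swamps the main term $N^{1/p}$. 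So the asserted conclusion of that paragraph, $\E\rho(\xi,Q_n)_{\ell_p^N}\ge c\,N^{1/p}B^{-1}$, is not justified by the tools you invoke. A secondary defect: even your main term needs $\E|\xi_i|\ge c$, which from $\E\xi_i^2=1$ and~\eqref{B_condition} follows only by reverse H\"older with a constant like $B^{-p'/(p'-2)}$, so the precise $B^{-1}$ dependence (which the Remark after the theorem insists on) would be lost in any case.

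The paper's proof avoids the shattering machinery of Lemma~\ref{lem_cube_section} entirely and is much shorter: with $P$ the orthoprojector onto $Q_n^\perp$, one tests the dual formulation with the single random vector $z=P\xi$, giving $\rho(\xi,Q_n)_p\ge\langle\xi,P\xi\rangle/\|P\xi\|_{p'}$. The numerator $\eta=\langle\xi,P\xi\rangle$ has $\E\eta=\tr P=N-n\ge\eps N$, and since $|\eta|\le|\xi|^2$, condition~\eqref{A_condition} bounds $\E|\eta|^{1+\gamma/2}$, so a Paley--Zygmund type argument yields $\P(\eta\ge\tfrac12\eps N)\ge t(A,\eps,\gamma)$. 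The denominator satisfies $\E\|P\xi\|_{p'}^{p'}=\sum_k\E|\langle\xi,Pe_k\rangle|^{p'}\le NB^{p'}$ by~\eqref{B_condition} (as $|Pe_k|\le1$), and Markov's inequality plus a combination of the two events gives the stated bound with the clean factor $B^{-1}$. Your final paragraph (using~\eqref{A_condition} through a Paley--Zygmund step) is in the right spirit, but it is applied to the numerator of this Gluskin quotient, not to an expectation bound for $\rho$ that you have not actually established.
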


\begin{proof}
    Let $P$ be the orthoprojector on the space $Q_n^\perp$.
    \begin{equation}
        \label{rho}
    \rho(\xi,Q_n)_p = \max_{z\in Q_n^\perp} \frac{\langle \xi,z\rangle}{\|z\|_{p'}}
    \ge \frac{\langle \xi,P\xi\rangle}{\|P\xi\|_{p'}}.
    \end{equation}

    First we analyze $\eta := \langle\xi,P\xi\rangle$, the numerator of~\eqref{rho}:
    $$
    \E\eta = \E\langle\sum\xi_i e_i,\sum \xi_i Pe_i\rangle =
    \sum \langle e_i,Pe_i\rangle = \tr P = N -n \ge \eps N.
    $$
    On the other side,
    $$
    \E\eta \le \frac12\eps N + \E\eta\{\eta\ge\frac12\eps N\} \le
        \frac12\eps N +
        (\E|\eta|^{1+\gamma/2})^{\frac2{2+\gamma}}\P(\eta\ge\frac12\eps
        N)^{\frac{\gamma}{2+\gamma}}.
    $$
    As $|\eta|=|\langle\xi,P\xi\rangle|\le|\xi|^2$, we have
    $\E|\eta|^{1+\gamma/2}\le (AN^{1/2})^{2+\gamma}$
    and we conclude that
    \begin{equation}
        \label{prob_numer}
        \P(\eta \ge \frac12\eps N)\ge t :=
        \left(\frac{\eps}{2A^2}\right)^{\frac{2+\gamma}\gamma}.
    \end{equation}

    Now, the denominator of~\eqref{rho}:
    $$
    \E\|P\xi\|_{p'}^{p'} = \sum_{k=1}^N \E|\langle P\xi,e_k\rangle|^{p'} = 
    \sum_{k=1}^N \E|\langle \xi,Pe_k\rangle|^{p'} \le NB^{p'}.
    $$
    Using Markov's inequality we obtain that
    \begin{equation}
        \label{prob_denom}
        \P(\|P\xi\|_{p'}^{p'} \ge 2t^{-1} NB^{p'})\le \frac12t.
    \end{equation}
    
    We combine~\eqref{prob_numer} and~\eqref{prob_denom} to get the required bound
    $$
    \rho(\xi,Q_n)_p \ge \frac{\langle \xi,P\xi\rangle}{\|P\xi\|_{p'}} \ge
    \frac{\frac12\eps N}{2^{1/p'}t^{-1/p'}N^{1/p'}B} \ge \frac14 \eps t^{1/2}
    N^{1/p}B^{-1}
    $$
    with probability at least $t/2$.
\end{proof}

\begin{remark}
    Note that if the condition~\eqref{B_condition} is satisfied,
    then~\eqref{A_condition} also holds true with $A:=B$, $\gamma:=p'-2$,
    see the proof of Theorem~\ref{th_p12_func}.
    However, is some cases we need the exact dependence $(\cdots)B^{-1}$ of the lower
    bound on the parameter $B$.
\end{remark}

Let us formulate a corollary in function-theoretic terms.

\begin{theorem}
    \label{th_p12_func}
    Let $\varphi_1,\ldots,\varphi_N$ be an orthonormal system in a space
    $L_2(\mathcal X,\mu)$, $\mu(\mathcal X)=1$.  Let $1<p<2$ and suppose that $\{\varphi_k\}$ has
    $S_{p'}$-property, i.e. for some $B\ge1$ one has
    $$
    \|\sum_{k=1}^N a_k\varphi_k \|_{p'} \le B|a|,\quad\forall a\in\R^N.
    $$
    Then this system is rigid in $L_p$:
    $$
    d_n^{\mathrm{avg}}(\{\varphi_1,\ldots,\varphi_N\},L_p)_p \ge
    c(B,\eps,p),\quad\mbox{if $n\le N(1-\eps)$}.
    $$
\end{theorem}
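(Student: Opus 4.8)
The plan is to derive Theorem~\ref{th_p12_func} from the finite-dimensional Theorem~\ref{th_p12} via the transpose identity of Statement~\ref{stm_equiv}. First I would set $\xi=(\varphi_1(\omega),\dots,\varphi_N(\omega))$, a random vector in $\R^N$ obtained by sampling $\omega\in\mathcal X$ according to $\mu$. Orthonormality of $\{\varphi_k\}$ in $L_2(\mathcal X,\mu)$ says precisely $\E\xi_i\xi_j=\int\varphi_i\varphi_j\,d\mu=\delta_{ij}$, so $\xi$ is isotropic — condition~\eqref{B_condition}'s ``isotropic'' hypothesis is met for free. Next I would check the $S_{p'}$-hypothesis translates to~\eqref{B_condition}: for a unit vector $v\in\R^N$, $\langle\xi,v\rangle=\sum_k v_k\varphi_k$ as a function on $\mathcal X$, so $(\E|\langle\xi,v\rangle|^{p'})^{1/p'}=\|\sum_k v_k\varphi_k\|_{L_{p'}(\mu)}\le B|v|=B$. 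That is exactly~\eqref{B_condition}.

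The remaining hypothesis of Theorem~\ref{th_p12} is the moment bound~\eqref{A_condition} on $\E|\xi|^{2+\gamma}$, and this is the one place a small argument is needed — it is flagged in the Remark as ``see the proof of Theorem~\ref{th_p12_func}''. The point is that $|\xi|^2=\sum_{k=1}^N|\langle\xi,e_k\rangle|^2$, and applying~\eqref{B_condition} coordinatewise with $v=e_k$ (here $\langle\xi,e_k\rangle=\varphi_k$) gives $\E|\varphi_k|^{p'}\le B^{p'}$. I would use this with the triangle inequality in $L_{p'/2}$ (valid since $p'>2$, so $p'/2>1$):
\[
(\E|\xi|^{p'})^{2/p'}=\Big\|\sum_{k=1}^N\varphi_k^2\Big\|_{L_{p'/2}(\mu)}\le\sum_{k=1}^N\|\varphi_k^2\|_{L_{p'/2}(\mu)}=\sum_{k=1}^N(\E|\varphi_k|^{p'})^{2/p'}\le NB^2.
\]
Hence $(\E|\xi|^{p'})^{1/p'}\le B\sqrt N$, which is precisely~\eqref{A_condition} with $A:=B$ and $\gamma:=p'-2>0$ (note $2+\gamma=p'$). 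So all hypotheses of Theorem~\ref{th_p12} hold with parameters depending only on $B$ and $p$.

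Finally I would assemble the conclusion. Theorem~\ref{th_p12} yields a subspace-independent lower bound: for every $Q_n\subset\R^N$ with $n\le N(1-\eps)$,
\[
\P\big(\rho(\xi,Q_n)_{\ell_p^N}\ge c_1 N^{1/p}B^{-1}\big)\ge c_1,\qquad c_1=c_1(B,\eps,p)>0.
\]
Therefore $\E\rho(\xi,Q_n)_{\ell_p^N}^p\ge c_1\cdot(c_1 N^{1/p}B^{-1})^p=c_1^{1+p}B^{-p}N$, so $d_n^\avg(\xi,\ell_p^N)_p\ge c_1^{(1+p)/p}B^{-1}N^{1/p}$. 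Plugging into~\eqref{avg_equiv} of Statement~\ref{stm_equiv},
\[
d_n^\avg(\{\varphi_1,\dots,\varphi_N\},L_p(\mathcal X,\mu))_p=N^{-1/p}\,d_n^\avg(\xi,\ell_p^N)_p\ge c_1^{(1+p)/p}B^{-1}=:c(B,\eps,p)>0,
\]
which is the claimed bound. I expect no real obstacle here: the content is entirely in the finite-dimensional Theorem~\ref{th_p12}, and this theorem is just the ``dictionary translation'' plus the elementary moment computation above. The only mild subtlety to be careful about is that the isotropy and $S_{p'}$-conditions must be verified for the random vector $\xi$ sampled from $\mu$ (not from some auxiliary measure), and that the resulting constant is independent of $N$ — both are transparent from the displayed computations.
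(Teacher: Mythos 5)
Your proposal is correct and follows exactly the paper's own route: sample a random point of $\mathcal X$ to get the vector $\xi=(\varphi_1,\dots,\varphi_N)$, note that isotropy comes from orthonormality and condition~\eqref{B_condition} is literally the $S_{p'}$-property, verify~\eqref{A_condition} with $A=B$, $\gamma=p'-2$ by the same triangle-inequality computation in $L_{p'/2}$, and then combine Theorem~\ref{th_p12} with Statement~\ref{stm_equiv}. The only difference is that you spell out the (routine) passage from the probability bound to the bound on $(\E\rho^p)^{1/p}$, which the paper leaves implicit.
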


\begin{proof}
Indeed, let $x$ be a random point in $\mathcal X$ and consider the random vector
$\xi=(\varphi_1(x),\ldots,\varphi_N(x))$. 
    The condition~\eqref{B_condition} of Theorem~\ref{th_p12} is exactly the $S_{p'}$--property.
    Let us check that the condition~\eqref{A_condition}  is fullfilled with
    $A:=B$ and $\gamma=p'-2$.
    Indeed, 
    $$
    \|\varphi_k^2\|_{1+\gamma/2} = \|\varphi_k\|_{2+\gamma}^2 = \|\varphi_k\|_{p'}^2 \le B^2,
    $$
    $$
    \|(\sum_{k=1}^N \varphi_k^2)^{1/2}\|_{2+\gamma}^2 = \|\varphi_1^2+\ldots+\varphi_N^2\|_{1+\gamma/2} \le
    \sum_{k=1}^N \|\varphi_k^2\|_{1+\gamma/2} \le B^2N.
    $$
    Hence $d_n^{\mathrm{avg}}(\xi,\ell_p^N)_p$ is bounded from below.
    It remains to use Statement~\ref{stm_equiv}.
\end{proof}

\begin{corollary}
    Let $\varphi_1,\ldots,\varphi_N$ be an orthonormal system in $L_2(\mathcal X,\mu)$,
    $\mu(\mathcal X)=1$, that
    is uniformely bounded: $\|\varphi_k\|_\infty\le A$. Then for $1<p<2$ we have
    $$
    d_n(\{\varphi_1,\ldots,\varphi_N\},L_p) \ge
    c(A,p)>0,\quad\mbox{if $n\le \frac12 N^{2/p'}$}.
    $$
\end{corollary}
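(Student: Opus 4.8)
The plan is to reduce the Corollary to Theorem~\ref{th_p12_func} via Bourgain's $\Lambda(p)$-theorem, the polynomial gap $N^{2/p'}$ versus $N$ being supplied entirely by the latter. Fix $1<p<2$, so that $p'=p/(p-1)>2$; for such $p'$ the $S_{p'}$-property is a genuine restriction, and Bourgain's theorem~\cite{B89} asserts that a uniformly bounded (by $A$) orthonormal system $\varphi_1,\ldots,\varphi_N$ always contains a subsystem $\{\varphi_k\}_{k\in\Lambda}$, $M:=|\Lambda|\ge N^{2/p'}$, which is $S_{p'}$ with a constant $B=B(A,p)\ge1$, i.e.\ $\|\sum_{k\in\Lambda}a_k\varphi_k\|_{p'}\le B\,(\sum_{k\in\Lambda}a_k^2)^{1/2}$. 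This extraction is the only non-elementary ingredient; the rest is bookkeeping with monotonicity of widths.

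Next I would apply Theorem~\ref{th_p12_func} to the subsystem $\{\varphi_k\}_{k\in\Lambda}$, which is an orthonormal system of $M$ functions enjoying the $S_{p'}$-property with constant $B$. Since
$$
n\le \tfrac12 N^{2/p'}\le \tfrac12 M = M\bigl(1-\tfrac12\bigr),
$$
the hypothesis of that theorem is met with $\eps=\tfrac12$, and therefore
$$
d_n^{\avg}(\{\varphi_k\}_{k\in\Lambda},L_p)_p \ge c\bigl(B,\tfrac12,p\bigr) =: c(A,p) > 0.
$$

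Finally I would pass from the subsystem back to the whole system by two standard inequalities. First, Kolmogorov widths are monotone under taking subsets: for $\Lambda\subset\{1,\ldots,N\}$ and any $n$-dimensional $Q_n$ one has $\sup_{k\in\Lambda}\rho(\varphi_k,Q_n)\le\sup_{k\le N}\rho(\varphi_k,Q_n)$, hence $d_n(\{\varphi_1,\ldots,\varphi_N\},L_p)\ge d_n(\{\varphi_k\}_{k\in\Lambda},L_p)$. Second, the basic inequality $d_n\ge d_n^{\avg}(\cdot)_p$ applied to the uniform measure on the finite set $\{\varphi_k\}_{k\in\Lambda}$ gives $d_n(\{\varphi_k\}_{k\in\Lambda},L_p)\ge d_n^{\avg}(\{\varphi_k\}_{k\in\Lambda},L_p)_p$. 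Chaining these with the previous step yields
$$
d_n(\{\varphi_1,\ldots,\varphi_N\},L_p)\ge d_n^{\avg}(\{\varphi_k\}_{k\in\Lambda},L_p)_p \ge c(A,p),
$$
which is the assertion. The only point that needs care — and, in this essentially black-box argument, the closest thing to an obstacle — is the cardinality matching: one must know that Bourgain's subsystem has size at least $N^{2/p'}$ so that the stated range $n\le\tfrac12 N^{2/p'}$ lies inside $n\le M(1-\eps)$ with $\eps$ bounded away from $0$. If one only has $M\ge c_0 N^{2/p'}$ with $c_0=c_0(A,p)\in(0,1)$, one simply replaces $\eps=\tfrac12$ by any $\eps$ with $1-\eps\ge 1/(2c_0)$ when this is $<1$ (shrinking the admissible $n$-range by the harmless constant $c_0$ otherwise), which changes only the constant $c(A,p)$.
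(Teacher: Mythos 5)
Your argument is correct and is essentially the paper's own proof: extract an $S_{p'}$-subsystem of size about $N^{2/p'}$ via Bourgain's theorem and apply Theorem~\ref{th_p12_func} (with the trivial monotonicity facts $d_n\ge d_n^{\mathrm{avg}}$ and restriction to a subsystem), which is exactly what the paper does in one line. Your extra care about the exact size of the subsystem versus $\lfloor N^{2/p'}\rfloor$ only affects the choice of $\eps$ and the constant, as you note.
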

\begin{proof}
    Indeed, by a Theorem of Bourgain~\cite{B89}, one can find a subsystem of
$\{\varphi_k\}$ of size $\lfloor N^{2/p'}\rfloor$ with $S_{p'}$-property and
apply previous Corollary to that subsystem.
\end{proof}

\subsection{Additional remarks}

\paragraph{Explicit rigid sets.}

Previous considerations allows us to give explicit constructions of $\ell_p$-rigid
subsets of $\{-1,1\}^N$ of polynomial size.
\begin{statement}
    \label{stm_explicit_rigid}
    For any $p\in(1,2)$ and any sufficiently large $N$ one can explicitly
    construct a set $V\subset\{-1,1\}^N$ of size $|V|\le N^{C(p)}$ such that
    $d_{N/2}(W,\ell_p^N)\ge c(p) N^{1/p}$.
\end{statement}

\begin{proof}
    It is sufficient to construct $N$ characters $\chi_1,\ldots,\chi_N\in \widehat{\mathbb
    Z_2^k}$ (with appropriate $k$) that form an $S_{p'}$-system.
    Then we apply Theorem~\ref{th_p12_func}:
    $d_{N/2}^\avg(\{\chi_1,\ldots,\chi_N\},L_p)_p\ge c>0$. Hence once can
    take $V:=\{(\chi_1(x),\ldots,\chi_N(x))\colon x\in \mathbb
    Z_2^k\}$.

    In the paper~\cite{Haj86} the $S_{2m}$-system of $2^n$ characters in $\mathbb
    Z_2^{mn}$ is constructed, so it remains to take $2^n\approx N$, $m=\lceil p'/2\rceil$,
    $k=mn$.
\end{proof}

\paragraph{Independence in $L_p$.}
The analog of Theorem~\ref{th_l1} for independent random variables with
$L_p$-normalization holds true for $1<p\le 2$. One can derive the rigidity in
this case using Theorem~\ref{th_p12} (up to a logarithmic factor), but this
question is the subject of another paper.
However, the rigidity does not hold true for $p>2$.

\begin{statement}
    For any $p\in(2,\infty)$ there exists $\delta>0$, such that for all sufficiently large $N$ there is
    a sequence $\xi_1,\ldots,\xi_N$ of independent identically distributed
    random variables having $\E\xi_i=0$, $\E|\xi_i|^p=1$, and
    $$
    d_{N^{1-\delta}}(\{\xi_1,\ldots,\xi_N\},L_p) \le C(p)N^{-\delta}.
    $$
\end{statement}

\begin{proof}
    We will use the approximation of octahedron $B_1^N$ in $\ell_\infty^N$
    (see~\cite{K75}). Let $n=N^{1-\beta}$, $\beta$ to be
    chosen later, and $Q_n^*$ be the extremal $n$-dimensional
    $\ell_\infty$-subspace:
    $$
    \rho(B_1^N,Q_n^*)_\infty = d_n(B_1^N,\ell_\infty^N) \le C(\beta) n^{-1/2}.
    $$
    We will approximately ``simulate'' vertices of $B_1^N$.
    Pick some small $\eps>0$, and let $\xi_1$ be the random variable s.t.
    $$
    \P(\xi_1=0)=1-\eps,\;\P(\xi_1=K)=\P(\xi_1=-K)=\eps/2,
    $$
    where $K$ is defined by the condition $\E|\xi_1|^p=1$; so, $\eps K^p=1$. Consider independent
    copies $\xi_2,\ldots,\xi_N$ of $\xi_1$ and the vector
    $\xi=(\xi_1,\ldots,\xi_N)$. We will construct an approximation
    vector $\eta$ that lies in $Q_n^*$ (hence $\eta_1,\ldots,\eta_N$ lie in an
    $n$-dimensional subspace of $L^p$).

    Consider three events: $\mathcal{A}_0=\{\xi=0\}$, $\mathcal{A}_1$~--- exactly one coordinate of
    $\xi$ is nonzero; and $\mathcal{A}_{2}$~--- at least two coordinates of $\xi$ are
    nonzero. Define $\eta(\omega):=0$ for $\omega\in \mathcal{A}_0\cup
    \mathcal{A}_{2}$ and let $\eta(\omega)$ be the best approximation of
    $\xi(\omega)$ from $Q_n^*$ if $\omega\in \mathcal{A}_1$.
    In the latter case we have $\|\xi-\eta\|_\infty \le
    K\rho(B_1^N,Q_n^*)_\infty$.

    By symmetry, it is sufficient to estimate $\E|\xi_1-\eta_1|^p$:
    $$
    \E|\xi_1-\eta_1|^p \le \P(\mathcal{A}_1) K^p\rho(B_1^N,Q_n^*)_\infty^p +
    \P(\mathcal{A}_2) K^p.
    $$
    The first term is bounded by
    $$
    N\eps K^p C(\beta)^p n^{-p/2} \ll N^{1-(1-\beta)p/2} \ll N^{-\delta}
    $$
    if $\beta$ is small enough.  The second term is bounded by $N^2\eps^2K^p =
    N^2\eps$ and we can make it small letting $\eps$ to zero.
\end{proof}

\section{Approximation of Walsh and trigonometric functions}
\label{sec_positive}

Let us recall some notations. Given a finite abelian group $G$, we denote by
$\widehat{G}$ the group of charactegs $\chi\colon G\to\mathbb{C}$. The
probabilistic measure $\mu_G$ on $G$ ($\mu_G(A)=|A|/|G|$) allows to define
spaces $\LH$ and $L_0$ of functions on $G$ with Hamming and Ky--Fan metric,
correspondingly (see~\S\ref{subsect_metrics}).

We are interested in two particular cases: the group $\mathbb Z_2^k$ and the
cyclic group $\mathbb Z_N$.

In the first case the characters are real and may be
written as $W_x\colon y\mapsto (-1)^{\langle x,y\rangle}$ ($x,y\in\mathbb Z_2^k$).
We can identify these characters with the functions
of the well--known orthonormal Walsh system (in the Paley numeration)
$w_0\equiv1,w_1,w_2,\ldots$. Indeed, for $n<2^k$ we have
$w_n(\sum_{j\ge 1}y_j2^{-j}) = W_x(y)$,
where
$x=(x_1,\ldots,x_k)\in\{0,1\}^k$, $y=(y_1,\ldots,y_k)\in\{0,1\}^k$,
$n=\sum_{j=1}^kx_j2^{j-1}$.

The second case leads, of course, to discrete Fourier system $y\mapsto e(xy/N)$,
where $e(\theta):=\exp(2\pi i\theta)$.

\subsection{Kolmogorov widths}

Recall the following well-known statement (see, e.g.,~\cite[\S2.1]{TaoWuBook}).

\begin{lemma}
    Let $G$ be a finite Abelian group, $A,B\subset G$ and $|A|+|B|>|G|$. Then
    $A+B=\{a+b\colon a\in A,\,b\in B\} = G$.
\end{lemma}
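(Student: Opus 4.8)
The plan is to prove the contrapositive-free direct statement by a pigeonhole (inclusion--exclusion) argument applied to a single translated copy of $B$. The key observation is that $g\in A+B$ if and only if the sets $A$ and $g-B:=\{g-b\colon b\in B\}$ intersect, and that translation/reflection in a group preserves cardinality, so $|g-B|=|B|$.

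\textbf{Steps.} First I would fix an arbitrary element $g\in G$ and form the set $g-B$. Since the map $b\mapsto g-b$ is a bijection of $G$ onto itself (its inverse is $x\mapsto g-x$), we have $|g-B|=|B|$. Second, I would add cardinalities: $|A|+|g-B|=|A|+|B|>|G|$ by hypothesis. Since both $A$ and $g-B$ are subsets of the finite set $G$, the inclusion--exclusion bound $|A\cup(g-B)|=|A|+|g-B|-|A\cap(g-B)|\le|G|$ forces $|A\cap(g-B)|\ge|A|+|B|-|G|>0$. Third, picking any $c\in A\cap(g-B)$ we may write $c=a$ with $a\in A$ and $c=g-b$ with $b\in B$, hence $g=a+b\in A+B$. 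As $g\in G$ was arbitrary, $A+B=G$ (the reverse inclusion $A+B\subseteq G$ is trivial).

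\textbf{Main obstacle.} There is essentially no obstacle here: the argument is a two-line pigeonhole once the reformulation ``$g\in A+B\iff A\cap(g-B)\ne\varnothing$'' is in place. The only point worth stating carefully is that $g-B$ has the same size as $B$, which uses nothing beyond the group axioms (existence of inverses and cancellation); finiteness of $G$ is what makes the cardinality comparison conclusive.
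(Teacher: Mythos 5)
Your proof is correct and is exactly the standard pigeonhole argument (intersecting $A$ with the translate $g-B$) that the paper invokes by reference to Tao--Vu without reproducing it. Nothing to add.
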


We will use its immediate consequence.

\begin{lemma}
    \label{lem_half}
    Let $A,B\subset\widehat{G}$ and $|A|+|B|>|G|$. Then for any $m,n\in\mathbb
    N$ we have
    $$
    d_{mn}(\widehat{G},\LH) \le d_m(A,\LH)+d_n(B,\LH).
    $$
\end{lemma}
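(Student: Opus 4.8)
The plan is to reduce the width of $\widehat{G}$ to the widths of $A$ and $B$ by exploiting the multiplicative structure of characters: the product of a character from $A$ and a character from $B$ is again a character, and by the preceding lemma every character of $G$ arises this way. So first I would fix near-optimal approximating spaces: let $Q_m = \mathrm{span}\,\{\alpha_1,\ldots,\alpha_m\}$ witness $d_m(A,\LH)$ and $R_n = \mathrm{span}\,\{\beta_1,\ldots,\beta_n\}$ witness $d_n(B,\LH)$, where each $\alpha_i,\beta_j$ is a function on $G$. The candidate $mn$-dimensional space for approximating $\widehat{G}$ is $P := \mathrm{span}\,\{\alpha_i\beta_j : 1\le i\le m,\ 1\le j\le n\}$, which has dimension at most $mn$.

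Next I would estimate $\rho(\chi, P)_{\LH}$ for an arbitrary $\chi\in\widehat{G}$. By the lemma, $\chi = ab$ for some $a\in A$, $b\in B$ (viewing $A,B$ as subsets of $\widehat{G}$). Pick $g\in Q_m$ with $\|a-g\|_{\LH}\le d_m(A,\LH)$ and $h\in R_n$ with $\|b-h\|_{\LH}\le d_n(B,\LH)$. Then $gh\in P$, and pointwise on $G$, $\chi(x) - g(x)h(x) = a(x)b(x) - g(x)h(x)$; if $a(x)=g(x)$ and $b(x)=h(x)$ then this difference is zero. Hence the set $\{x : \chi(x)\ne g(x)h(x)\}$ is contained in $\{x: a(x)\ne g(x)\}\cup\{x: b(x)\ne h(x)\}$, so by subadditivity of measure $\|\chi - gh\|_{\LH}\le \|a-g\|_{\LH}+\|b-h\|_{\LH}\le d_m(A,\LH)+d_n(B,\LH)$. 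Taking the supremum over $\chi\in\widehat{G}$ gives $d_{mn}(\widehat{G},\LH)\le d_m(A,\LH)+d_n(B,\LH)$.

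There is essentially no hard step here — the Hamming (indicator) metric is perfectly suited to this tensoring argument because the ``error support'' of a product is contained in the union of the error supports of the factors, and dimension multiplies under the span of products. The only points requiring a word of care are: (i) the identification of $A,B\subset\widehat G$ as the relevant index sets so that the covering lemma ($|A|+|B|>|G|\Rightarrow A+B=\widehat G$, phrased additively in $\widehat G$) applies; and (ii) noting that the approximants $g,h$ need not themselves be characters, but that is irrelevant since we only use the pointwise relation between supports. One subtlety worth flagging, if a reader is skeptical, is that the bound on $\dim P$ is $mn$ and not more, which is immediate since $P$ is spanned by $mn$ explicit functions.
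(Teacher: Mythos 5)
Your proof is correct and is exactly the argument the paper intends: it states Lemma~\ref{lem_half} as an immediate consequence of the sumset lemma applied to $\widehat G$, i.e.\ writing each character as a product $ab$ with $a\in A$, $b\in B$, approximating by a product of near-optimal approximants, and using that the error support of a product lies in the union of the factors' error supports while the span of the $mn$ pairwise products has dimension at most $mn$. The only cosmetic point is that the widths need not be attained, so one should take $\eps$-near-optimal subspaces and let $\eps\to0$.
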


The next theorem is proven using methos from~\cite{AW17}. It is a modification
of Theorem 1.2 from that paper. 

\begin{theorem}
    Let $k\in\mathbb N$. For any $1\le \lambda\le \frac14\sqrt{k}$ we have
    $$
    d_n(\widehat{\mathbb Z_2^k},\LH) \le 4\exp(-2\lambda^2),\quad
    \mbox{if $n\ge (k/\lambda^2)^{4\lambda\sqrt{k}}$.}
    $$
\end{theorem}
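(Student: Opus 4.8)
The plan is to approximate every Walsh character by a \emph{low-degree multilinear polynomial} and to exploit that all multilinear polynomials of a fixed degree lie in one subspace whose dimension is controlled by~\eqref{binom}. Identify $\widehat{\mathbb Z_2^k}$ with the functions $\chi_x\colon\{0,1\}^k\to\{-1,1\}$, $\chi_x(y)=(-1)^{\langle x,y\rangle}$, and for $x$ of weight $w:=|x|$ write $S_x(y):=\sum_{i\in\supp x}y_i\in\{0,\dots,w\}$, so that $\chi_x(y)=(-1)^{S_x(y)}$. Put $d:=\lceil 2\lambda\sqrt k\rceil$. For each $x$ I would choose a set $I_x\subset\mathbb Z$ of at most $d+1$ integers on which I interpolate: if $w\le d$ take $I_x=\{0,\dots,w\}$; if $w>d$, note that the $y_i$ are i.i.d.\ Bernoulli$(1/2)$, so $S_x$ has mean $w/2$, and Hoeffding's inequality (in the form recalled in the Preliminaries, applied to $y_i-\tfrac12$, $i\in\supp x$, so $\sigma^2=w$) gives $\P(|S_x-w/2|\ge\lambda\sqrt w)\le 2e^{-2\lambda^2}$; the interval $[w/2-\lambda\sqrt w,\,w/2+\lambda\sqrt w]$ contains at most $2\lambda\sqrt w+1\le 2\lambda\sqrt k+1\le d+1$ integers, which I take as $I_x$. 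Let $p_x$ be the univariate polynomial of degree $\le|I_x|-1\le d$ with $p_x(s)=(-1)^s$ on $I_x$ (Lagrange interpolation) and set $\widetilde\chi_x:=p_x(S_x(\cdot))$, reduced to multilinear form via $y_i^2=y_i$. Since $S_x$ is linear in the $y_i$ and the reduction does not raise the degree, $\widetilde\chi_x$ lies in
$$
Q:=\mathrm{span}\Bigl\{\,\prod_{i\in T}y_i\ :\ T\subset\{1,\dots,k\},\ |T|\le d\,\Bigr\},\qquad \dim Q=\sum_{j=0}^d\binom kj,
$$
and $\widetilde\chi_x=\chi_x$ on $\{S_x\in I_x\}$, so $\|\chi_x-\widetilde\chi_x\|_{\LH}\le 2e^{-2\lambda^2}$ for every $x$ (with no error when $w\le d$).

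Since the \emph{single} subspace $Q$ approximates all characters, $d_{\dim Q}(\widehat{\mathbb Z_2^k},\LH)\le 2e^{-2\lambda^2}$, and as $d_n$ is non-increasing in $n$ this persists for all $n\ge\dim Q$. It remains to bound $\dim Q$. Here $d\le 2\lambda\sqrt k+1\le 3\lambda\sqrt k$ (using $\lambda\sqrt k\ge 1$) and $3\lambda\sqrt k\le\tfrac34 k\le k$, so \eqref{binom} together with the monotonicity of $x\mapsto(ek/x)^x$ on $(0,k]$ noted there yields
$$
\dim Q\le (ek/d)^d\le\Bigl(\tfrac{e\sqrt k}{3\lambda}\Bigr)^{3\lambda\sqrt k}.
$$
Comparing with the target $(k/\lambda^2)^{4\lambda\sqrt k}$ and cancelling the common exponent $\lambda\sqrt k$, it suffices to check $\bigl(\tfrac{e\sqrt k}{3\lambda}\bigr)^{3}\le\bigl(\tfrac{k}{\lambda^2}\bigr)^{4}$, i.e.\ $(e/3)^3\lambda^5\le k^{5/2}$; this holds comfortably because $\lambda\le\tfrac14\sqrt k$ forces $\lambda^5\le k^{5/2}/4^5$ and $(e/3)^3<1$. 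Hence $\dim Q\le(k/\lambda^2)^{4\lambda\sqrt k}$, and for every $n\ge(k/\lambda^2)^{4\lambda\sqrt k}$ we obtain $d_n(\widehat{\mathbb Z_2^k},\LH)\le 2e^{-2\lambda^2}\le 4e^{-2\lambda^2}$.

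The one place that needs care is making the degree bound $d=O(\lambda\sqrt k)$ \emph{uniform in $x$}: the Hoeffding window around $w/2$ has width $\asymp\lambda\sqrt w$, which is worst (of order $\lambda\sqrt k$) exactly when $w$ is large, whereas for small $w$ concentration is useless and one must instead interpolate exactly over the full range $\{0,\dots,w\}$ — and the choice $d=\lceil 2\lambda\sqrt k\rceil$ is precisely what makes $I_x$ fit inside $d+1$ integers in both regimes. Everything else (Hoeffding, Lagrange interpolation, the binomial bound, and the final one-line estimate) is routine, and the constants $\tfrac14$ in the hypothesis and $4\lambda\sqrt k$ in the conclusion leave ample room; in fact the argument even yields the slightly sharper error $2e^{-2\lambda^2}$.
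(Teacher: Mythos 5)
Your proof is correct, and it takes a genuinely different (in fact slightly sharper) route than the paper. The paper follows the Alman--Williams scheme: it first restricts to the characters $W_x$ with $|x|$ close to $k/2$ (a set of more than half of $\widehat{\mathbb Z_2^k}$, by Hoeffding applied to $x$), for those uses a \emph{single} interpolation window $\mathbb Z\cap[k/4-2\lambda\sqrt k,\,k/4+2\lambda\sqrt k]$ and one polynomial $q$ of degree $\le 4\lambda\sqrt k$, bounds the dimension by counting monomials of $q(x_1y_1+\dots+x_ky_k)$ exactly as you do, and then upgrades from ``half the characters'' to all of $\widehat{\mathbb Z_2^k}$ via Lemma~\ref{lem_half} (every character is a product of two characters from the half-set), which doubles the error to $4e^{-2\lambda^2}$ and squares the dimension to $(k/\lambda^2)^{4\lambda\sqrt k}$. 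You avoid that last step entirely by letting the interpolation window depend on $x$: you center it at $|x|/2$ (or interpolate exactly over $\{0,\dots,|x|\}$ when $|x|$ is small), and observe that although the polynomial $p_x$ varies with $x$, all approximants $p_x(S_x(\cdot))$ still lie in the one space of multilinear monomials of degree at most $\lceil 2\lambda\sqrt k\rceil$, whose dimension you control by \eqref{binom}. The ingredients (Hoeffding, Lagrange interpolation, the binomial bound, the monomial/rank count) are the same, but your per-character centering removes the need for the group-theoretic doubling lemma and yields the stronger conclusion (error $2e^{-2\lambda^2}$ and a dimension roughly the square root of the paper's); the paper's structure, on the other hand, is the one that generalizes to its subsequent $\widehat{\mathbb Z_{2^k}}$/$L_0$ theorem, where the product trick of Lemma~\ref{lem_half} is used again. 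Your numerical verifications ($d\le 3\lambda\sqrt k\le k$, the monotonicity of $(ek/x)^x$, and the comparison $(e\sqrt k/3\lambda)^3\le(k/\lambda^2)^4$ under $\lambda\le\frac14\sqrt k$) are all correct.
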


\begin{proof}
    We identify $\mathbb Z_2^k\equiv\{0,1\}^k$ and denote characters as $W_x$:
    $$
    W_x\colon y\mapsto (-1)^{\langle x,y\rangle},\quad x,y\in\{0,1\}^k.
    $$
    Our goal is to approximate $\{W_x\}$ in $\LH$.
    Let $\mathcal X$ be the set of $x$ such that $|\sum_{i=1}^k
    x_i-k/2|\le\sqrt{k}$. Hoeffding inequality shows that $|X|>2^{k-1}$, so it
    is enough to approximate $W_x$ for $x\in\mathcal X$ (and the use
    Lemma~\ref{lem_half}).

    Fix $x\in\mathcal X$. Let $I:=\{i\colon x_i=1\}$,
    $$
    Y_I := \left\{y\in\{0,1\}^k\colon \left|\sum_{i\in I} y_i -
    \frac{|I|}2\right| \le \lambda|I|^{1/2}\right\}.
    $$
    Then $2^{-k}|Y_I| \ge 1-2\exp(-2\lambda^2)$, and 
    so for any $y\in Y_I$ we have
    \begin{multline*}
    |\langle x,y\rangle-k/4| = |\sum_{i\in I}y_i - k/4| \le|\sum_{i\in
        I}y_i-\frac{|I|}2|+|\frac{|I|}2-k/4| \le \\
    \le\lambda\sqrt{|I|}+\sqrt{k}/2\le 2\lambda\sqrt{k}.
    \end{multline*}
    We consider the polynomial $q(t)$ that interpolates $(-1)^t$ for
    $t\in\mathbb Z\cap[k/4-2\lambda\sqrt{k},k/4+2\lambda\sqrt{k}]$. It's degree
    $d$ is at most $4\lambda\sqrt{k}$; note that $4\lambda\sqrt{k}\le k$.
    Then we have $W_x(y)=q(\langle x,y\rangle)$ for all $y\in Y_I$, hence
    $$
    \|W_x - q(\langle x,\cdot\rangle)\|_{\LH} \le 2\exp(-2\lambda^2).
    $$

    Let us bound the linear dimension of the family of functions $\{q(\langle
    x,\cdot\rangle)\}_x$ (it is enough to consider $x\in\mathcal X$ but it is
    more convenient to work with all $x\in\{0,1\}^k$). The dimension is equal to
    the rank of the matrix $(q(\langle x,y\rangle))_{x,y\in\{0,1\}^k}$, where
    $x$ indexes rows and $y$ indexes columns. The rank is bounded by the number of the
    monomials in the polynomial $q(x_1y_1+\ldots+x_ky_k)$, because each monomial
    generates rank-one matrix. Every monomial is a product of some $x_iy_i$, so
    we can bound total number of them:
    $$
    \rank(q(\langle x,y\rangle)_{x,y\in\{0,1\}^k}) \le
    \binom{k}{0}+\ldots+\binom{k}{d} \le (ek/d)^d \le
    (\sqrt{k}/\lambda)^{4\lambda\sqrt{k}}.
    $$
    To finish the proof it remains to apply Lemma~\ref{lem_half}.
\end{proof}

\begin{corollary}
    \label{cor_walsh}
    Let $w_1,\ldots,w_N,\ldots$ be the Walsh system. Then we have
    $$
    d_n(\{w_1,\ldots,w_N\},\LH[0,1])\le 2\exp(-c\log^{0.3}N),\quad
    \mbox{if $n\ge\exp(C\log^{0.7}N)$.}
    $$
\end{corollary}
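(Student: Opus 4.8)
The plan is to realize the first $N$ Walsh functions as a subfamily of the characters of $\mathbb Z_2^k$ for a suitable $k\asymp\log N$ and then to invoke the previous Theorem with a carefully chosen $\lambda$. First I would fix $k:=\lceil\log_2(N+1)\rceil$, so that $N\le 2^k-1$ and hence $w_1,\dots,w_N$ are, under the dyadic identification $\sum_j y_j2^{-j}\leftrightarrow y\in\{0,1\}^k\equiv\mathbb Z_2^k$, among the characters $W_x$; note that $\log_2 N<k\le\log_2 N+2\le 2\log N$ for $N$ large. This identification is measure-preserving between Lebesgue measure on $(0,1)$ and the normalized counting measure on $\mathbb Z_2^k$, so the $\LH[0,1]$-distance between Walsh functions equals the normalized Hamming distance of the corresponding characters, and since approximating in $L_0[0,1]$ only enlarges the pool of admissible subspaces we get
\[
d_n(\{w_1,\dots,w_N\},\LH[0,1])\ \le\ d_n(\widehat{\mathbb Z_2^k},\LH).
\]

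Next I would apply the Theorem with $\lambda:=\log^{0.15}N$ (natural logarithm). For $N$ large this choice satisfies $1\le\lambda\le\tfrac14\sqrt k$, since $\sqrt k\ge(\log_2 N)^{1/2}>\log^{1/2}N$ while $\log^{0.15}N\le\tfrac14\log^{1/2}N$ eventually. The dimension required by the Theorem is then
\[
(k/\lambda^2)^{4\lambda\sqrt k}\ \le\ \bigl(2\log^{0.7}N\bigr)^{4\sqrt2\,\log^{0.65}N}\ =\ \exp\!\bigl(4\sqrt2\,\log^{0.65}N\cdot(\log 2+0.7\log\log N)\bigr),
\]
and since $\log\log N\le\log^{0.05}N$ for $N$ large, the right-hand side is at most $\exp(C\log^{0.7}N)$ for an absolute constant $C$. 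Hence for every $n\ge\exp(C\log^{0.7}N)$ the Theorem gives $d_n(\widehat{\mathbb Z_2^k},\LH)\le 4\exp(-2\lambda^2)=4\exp(-2\log^{0.3}N)\le 2\exp(-\log^{0.3}N)$, which together with the displayed inequality yields the Corollary (with $c=1$).

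There is no genuine difficulty here; the one point requiring care is the exponent bookkeeping — checking that the $\log\log N$ factor produced by $\log(k/\lambda^2)$ is absorbed into the gap between the exponents $0.65$ and $0.7$, and that the single parameter $\lambda$ can be chosen so as to make the error decay like $\exp(-\Theta(\log^{0.3}N))$ while keeping the demanded dimension below $\exp(C\log^{0.7}N)$. More generally, any $\lambda\asymp\log^{\alpha/2}N$ with $0<\alpha<1$ gives error $\exp(-\Theta(\log^{\alpha}N))$ at dimension $\exp(\Theta(\log^{\alpha/2+1/2}N\cdot\log\log N))$, so the same scheme produces the sharper-looking statement of the Introduction ($\alpha=1/4$, dimension exponent $2/3$) as well; the particular values $0.3$ and $0.7$ are chosen only to leave room to absorb the $\log\log N$ factor cleanly.
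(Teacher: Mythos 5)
Your proposal is correct and is exactly the deduction the paper intends: realize $w_1,\dots,w_N$ as characters of $\mathbb Z_2^k$ with $k\asymp\log N$, apply the preceding Theorem with $\lambda$ a small power of $\log N$ (your $\lambda=\log^{0.15}N$ works), and check that the dimension bound $(k/\lambda^2)^{4\lambda\sqrt k}$ is absorbed into $\exp(C\log^{0.7}N)$ thanks to the slack between the exponents $0.65$ and $0.7$. The only cosmetic slip is writing ``$L_0[0,1]$'' where you mean $\LH[0,1]$ when transferring the discrete approximation to step functions on $(0,1)$; the argument itself is fine.
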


\begin{theorem}
    Let $k\in\mathbb N$. For any $1\le \lambda \le c_0k^{1/4}$ we have
    $$
    d_n(\widehat{\mathbb Z_{2^k}},L_0) \le c_1k\exp(-c_2\lambda^2),\quad
    \mbox{if $n\ge (k^2/\lambda)^{c_3\lambda^3\sqrt{k}}$}.
    $$
\end{theorem}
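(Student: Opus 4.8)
The plan is to follow the proof of the preceding (Walsh, $L_0$) theorem, the only new feature being the carry terms that appear because $\mathbb Z_{2^k}$ is cyclic rather than a product of $\mathbb Z_2$'s. Identify $\mathbb Z_{2^k}$ with $\{0,1\}^k$ by binary expansion, so that the characters are $\chi_a(y)=e(ay/2^k)$ with $a=\sum_i a_i2^i$, $y=\sum_j y_j2^j$. Since $e(m)=1$ for $m\in\mathbb Z$, only the products $a_iy_j$ with $i+j\le k-1$ survive in $e(ay/2^k)=\prod_{i,j}e(a_iy_j2^{i+j-k})$, which gives
$$
\chi_a(y)=\prod_{\ell=0}^{k-1}e\bigl(b_\ell(a,y)\,2^{\ell-k}\bigr),\qquad
b_\ell(a,y):=\sum_{i+j=\ell}a_iy_j .
$$
Here $b_\ell$ is a sum of $n_\ell:=\#\{\,j:0\le j<k,\ \ell-j\in\supp a\,\}\le k$ of the bits of $y$.

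Fix $r:=\lceil c_4\lambda^2\rceil$ and split the product at $\ell=k-r$. For $\ell<k-r$ one has $|b_\ell 2^{\ell-k}|\le k2^{\ell-k}$, so $\bigl|\prod_{\ell<k-r}e(b_\ell2^{\ell-k})-1\bigr|\le 2\pi k2^{-r}$. For each of the $r$ indices $\ell\in[k-r,k-1]$, Hoeffding's inequality gives that $b_\ell(a,y)$ lies in the window $W_\ell:=[\,n_\ell/2-\lambda\sqrt{n_\ell},\,n_\ell/2+\lambda\sqrt{n_\ell}\,]$ with probability $\ge1-2e^{-2\lambda^2}$ over uniform $y$; by a union bound all of them do with probability $\ge1-2re^{-2\lambda^2}$. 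Let $q_{\ell,a}$ be the polynomial of degree $\le2\lambda\sqrt{n_\ell}\le2\lambda\sqrt k$ interpolating $t\mapsto e(t2^{\ell-k})$ at the (at most $2\lambda\sqrt k+1$) integer points of $W_\ell$, and put $g_a(y):=\prod_{\ell=k-r}^{k-1}q_{\ell,a}\bigl(b_\ell(a,y)\bigr)$. On the event that every $b_\ell$ $(\ell\ge k-r)$ lies in its window we have $g_a(y)=\prod_{\ell\ge k-r}e(b_\ell2^{\ell-k})$ and hence $|\chi_a(y)-g_a(y)|\le2\pi k2^{-r}$; since the Ky--Fan norm is bounded by the measure of the exceptional event together with the uniform error off it,
$$
\|\chi_a-g_a\|_{L_0}\le\max\bigl(2\pi k2^{-r},\,2re^{-2\lambda^2}\bigr)\le c_1k\,e^{-c_2\lambda^2},
$$
uniformly in $a$.

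To bound the dimension, note that $b_\ell$ is linear in $y$ and $y_j^2=y_j$, so $q_{\ell,a}(b_\ell)$ multilinearises to a polynomial in $y$ of degree $\le\min(2\lambda\sqrt k,n_\ell)$; therefore each $g_a$ is a multilinear polynomial in $y_0,\dots,y_{k-1}$ of degree $\le D:=\min(2r\lambda\sqrt k,\,k)$, with $D$ independent of $a$. Thus all $g_a$ lie in the span of the multilinear monomials of degree $\le D$, whose number is $\le(ek/D)^D$ (by~\eqref{binom}) when $D<k$, and $2^k$ otherwise; with $r\asymp\lambda^2$ this quantity is at most $(k^2/\lambda)^{c_3\lambda^3\sqrt k}$ for a suitable $c_3$ (the hypothesis $\lambda\le c_0k^{1/4}$ keeps all quantities above in their admissible ranges). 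Taking $n$ at least this dimension gives $d_n(\widehat{\mathbb Z_{2^k}},L_0)\le c_1k e^{-c_2\lambda^2}$. (As in the Walsh case one could instead approximate a subset of density $>1/2$ and invoke Lemma~\ref{lem_half}, but this is unnecessary since the construction is uniform in $a$.)

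The crux, and the only real obstacle, is the three-way balance in the error estimate: increasing $r$ suppresses the carry error $2\pi k2^{-r}$ but inflates the degree $D\approx2r\lambda\sqrt k$ and hence the dimension, whereas decreasing $r$ shrinks the dimension but leaves the factor $k$ in front of $e^{-c_2\lambda^2}$. Choosing $r\asymp\lambda^2$ and accepting that $k$ is exactly what produces the shape of the statement; it is also the reason the bound lives in the $L_0$ metric rather than in $\LH$, since, unlike the Walsh construction, $g_a$ agrees with $\chi_a$ on the good set only up to the small carry error, not exactly.
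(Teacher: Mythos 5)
Your proof is correct and follows the paper's argument in its essentials: the same binary-digit expansion of $e(xy/2^k)$ with carry terms, truncation of the low-order terms at height $\asymp\lambda^2$ (your $r$, the paper's $s_0$), Hoeffding concentration of each digit sum, interpolation of $t\mapsto e(2^{\ell-k}t)$ by a polynomial of degree $O(\lambda\sqrt k)$ on the concentration window, and a dimension count via low-degree monomials, with the same choice $r\asymp\lambda^2$ yielding the stated error and dimension.

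The one step you handle differently is the unbalanced characters. The paper fixes a single interpolation window around $k/4$, which works only for the balanced characters (a set of density $>1/2$), and then invokes Lemma~\ref{lem_half} to cover all of $\widehat{\mathbb Z_{2^k}}$; you instead let the window, and hence the polynomial $q_{\ell,a}$, depend on $a$, and observe that the dimension count needs only the common degree bound $D\le\min(2r\lambda\sqrt k,k)$, since every $g_a$ lies in the fixed span of multilinear monomials of degree at most $D$, of size $\le\sum_{j\le D}\binom kj\le(ek/D)^D$ by~\eqref{binom}. This removes the need for Lemma~\ref{lem_half} and the balanced-set restriction (and would streamline the Walsh theorem in the same way); the paper's version buys a single polynomial per level and a rank-of-product bound, but the resulting dimension estimates have the same shape. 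Your assembly of the $L_0$ error as $\max$ of the uniform carry error and the measure of the exceptional event is also a valid (slightly tighter) variant of the paper's sum $2\pi k2^{-s_0}+4s_0e^{-2\lambda^2}$. One small wording correction: the construction is not ``uniform in $a$'' --- the interpolating polynomials do depend on $a$ --- what is uniform is the containing subspace and the error bound, which is all the width estimate requires.
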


\begin{proof}
    We have to approximate characters $y\mapsto e(xy/2^k)$.
    Write $x=\sum_{i=0}^{k-1}x_i2^i$, $y=\sum_{j=0}^{k-1}y_j2^j$ to identify
    $x,y\in\mathbb Z_{2^k}$ with binary vectors
    $(x_0,\ldots,x_{k-1})$, $(y_0,\ldots,y_{k-1})$. Using
    1-periodicity of $e(\cdot)$, we obtain
    $$
    e(xy/2^k)=e(\sum_{i,j=0}^{k-1}x_iy_j2^{i+j-k})=e(\sum_{s=1}^k2^{-s}\sum_{i+j=k-s}x_iy_j).
    $$

    Fix a number $s_0\le\sqrt{k}$ and estimate the tail:
    $$
    \sum_{s>s_0}2^{-s}\sum_{i+j=k-s}x_iy_j\le k\sum_{s>s_0}2^{-s}\le k2^{-s_0}.
    $$
    The function $e(\cdot)$ is $(2\pi)$-Lipshits, so we have a uniform
    approximation
    \begin{equation}
        \label{e_uniform_approx}
        |e(xy/2^k) - \psi_x(y)| \le 2\pi k2^{-s_0},
        \quad \psi_x(y) := e(\sum_{s=1}^{s_0}2^{-s}\sum_{i+j=k-s}x_iy_j).
    \end{equation}
    Hence is enough to approximate functions $\psi_x$.

    As in the previous proof we first deal with
    the set $\mathcal X=\{x\colon |\sum_{i=0}^{k-1}x_i-k/2|\le\sqrt{k}\}$.
    Let us fix $x\in\mathcal X$.

    Fix an index $s\in\{1,\ldots,s_0\}$. We have
    $|\sum_{i=0}^{k-s}x_i-k/2|\le\sqrt{k}+s\le2\sqrt{k}$. 
    Let
    $J_s=\{j\in\{0,\ldots,k-s\}\colon x_{k-s-j}=1\}$, then
    $||J_s|-k/2|\le2\sqrt{k}$ and
    $$
    |\sum_{j\in J_s}y_j-\frac12|J_s||\le\lambda|J_s|^{1/2}
    $$
    for random $y\in\{0,1\}^k$ with probability at least $1-2\exp(-2\lambda^2)$.
    For such $y$ we have
    \begin{multline*}
    \left|\sum_{i+j=k-s}x_iy_j-\frac{k}{4}\right|
    = \left|\sum_{j\in J_s}y_j-\frac{k}{4}\right|
    \le \left|\sum_{j\in J_s}y_j-\frac{|J_s|}{2}\right|+\left|\frac{|J_s|}{2}-\frac{k}{4}\right|\le \\
    \le \lambda\sqrt{\frac{k}{2}+2\sqrt{k}}+\sqrt{k}\le 2\lambda\sqrt{k}.
    \end{multline*}
    (Here we may assume that $k$ is rather large.)

    Let us use the interpolation again. There is a polynomial $q_s$ of degree
    $d\le 4\lambda\sqrt{k}$, that interpolates the function $t\mapsto
    e(2^{-s}t)$ at points $\mathbb Z\cap
    [k/4-2\lambda\sqrt{k},k/4+2\lambda\sqrt{k}]$. Then the function
    $e(2^{-s}\sum_{i+j=k-s}x_iy_j)$ is approximated by $q_s(\langle
    x,y\rangle)$ and the error in the Hamming metric is at most 
    $2\exp(-2\lambda^2)$; wherein $\dim\{q_s(\langle x,\cdot\rangle)\}_x \le (ek/d)^d \le
    (\sqrt{k}/\lambda)^{4\lambda\sqrt{k}}$.

    The function that we wish to approximate is a product:
    \begin{equation}
        \label{e_product}
        \psi_x(y) = \prod_{s=1}^{s_0} e(2^{-s}\sum_{i+j=k-s}x_iy_j).
    \end{equation}
    We approximate it by the product of the corresponding polynomials.
    The error in the Hamming metric is at most
    $2s_0\exp(-2\lambda^2)$. The rank of the product is at most
    product of the ranks, i.e. $(\sqrt{k}/\lambda)^{4s_0\lambda\sqrt{k}}$.

    We use Lemma~\ref{lem_half} to approximate $\psi_x$ in $\LH$
    for all $x$. Finally, we use~\eqref{e_uniform_approx} to obtain the
    $L_0$-approximation of the charaters with the accuracy
    $2\pi k2^{-s_0} + 4s_0\exp(-2\lambda^2)$, using dimension $\le
    (k/\lambda^2)^{4s_0\lambda\sqrt{k}}$.
    It remains to take $s_0\asymp\lambda^2$.
\end{proof}

\begin{corollary}
    \label{cor_trig}
    $$
    d_n(\{e(kx)\}_{k=-N}^N, L_0(\mathbb T))\le 2\exp(-c\log^{0.2}N),\quad
    \mbox{if $n\ge\exp(C\log^{0.9}N)$.}
    $$
\end{corollary}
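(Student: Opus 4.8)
The plan is to deduce Corollary~\ref{cor_trig} from the preceding theorem on $d_n(\widehat{\mathbb Z_{2^k}},L_0)$ together with a transference argument from the cyclic group $\mathbb Z_{2^k}$ to the continuous circle $\mathbb T$. The main point is that the theorem already gives a quantitatively strong approximation of \emph{all} characters of $\mathbb Z_{2^k}$ in the $L_0$-metric; we only need to (a) choose $\lambda$ and $k$ correctly to optimize the $N$-dependence, and (b) pass from the discrete group to continuous trigonometric functions $e(kx)$, $|k|\le N$.

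\textbf{Step 1 (parameter choice).} First I would set $N \asymp 2^k$ (so $k \asymp \log_2 N$) and choose $\lambda \asymp k^{1/4} \asymp \log^{1/4} N$, which is admissible since $\lambda \le c_0 k^{1/4}$. Then the error term in the theorem is $c_1 k \exp(-c_2\lambda^2) \asymp \log N \cdot \exp(-c_2\sqrt{k}) \le \exp(-c\log^{1/2}N)$ for large $N$ (absorbing the polynomial factor $k$), and the dimension bound is $(k^2/\lambda)^{c_3\lambda^3\sqrt{k}} = \exp\bigl(c_3\lambda^3\sqrt{k}\log(k^2/\lambda)\bigr)$. With $\lambda \asymp k^{1/4}$ this exponent is $\asymp k^{3/4}\cdot k^{1/2}\cdot \log k = k^{5/4}\log k \asymp \log^{5/4}N\cdot \log\log N$, which is $\le \exp(C\log^{0.9}N)$—wait, $\log^{5/4}N$ is \emph{larger} than $\log^{0.9}N$, so I must instead choose $\lambda$ \emph{smaller}. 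Taking $\lambda \asymp \log^{\beta}N$ for a small $\beta>0$: the dimension exponent is $\asymp \lambda^3\sqrt{k}\log k \asymp \log^{3\beta+1/2}N\cdot\log\log N$ and the error is $\exp(-c\log^{2\beta}N)$. To get dimension $\le\exp(C\log^{0.9}N)$ I need $3\beta+1/2 < 0.9$, i.e.\ $\beta < 0.4/3 \approx 0.133$; say $\beta = 0.1$, giving error $\exp(-c\log^{0.2}N)$, exactly matching the statement.

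\textbf{Step 2 (transference to $\mathbb T$).} The remaining obstacle, and the part that needs care, is passing from $\widehat{\mathbb Z_{2^k}} = \{y\mapsto e(xy/2^k): x\in\mathbb Z_{2^k}\}$ to the continuous system $\{e(kx)\}_{|k|\le N}$ on $\mathbb T$. The idea is that on $\mathbb T$ the function $e(mx)$ for $|m|\le N \le 2^{k-1}$ can be identified, up to small $L_0$-error, with a character of $\mathbb Z_{2^k}$: one samples $x$ at the dyadic points $x = j/2^k$, $j\in\mathbb Z_{2^k}$, and uses that $e(mx)$ is $(2\pi|m|)$-Lipschitz, hence constant-to-within-$2\pi N 2^{-k} \le 2\pi/2$ on each interval $[j/2^k,(j+1)/2^k)$—but this error is $O(1)$, not small. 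To fix this one should instead take $k$ a bit larger than $\log_2 N$, say $2^k \asymp N^{1+o(1)}$ or even $2^k \asymp N\log N$, so that $2\pi N 2^{-k}$ is genuinely small; the extra factor only changes $k$ by $O(\log\log N)$ and does not affect the asymptotic exponents $0.2$ and $0.9$. Then approximating $e(mx)$ on $\mathbb T$ reduces, with negligible $L_0$-loss, to approximating the corresponding character of $\mathbb Z_{2^k}$, and the linear span of the approximants lifts to a subspace of functions on $\mathbb T$ of the same dimension (each monomial $\prod x_{i_r}y_{i_r}$ pulls back to a function of $x$ on $\mathbb T$, where $x_i$ is the $i$-th binary digit of $\lfloor 2^k x\rfloor$).

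\textbf{Step 3 (assembling).} Finally I would combine: for each $m$ with $|m|\le N$, write $e(mx) \approx \chi_m(\text{digits of }x)$ in $L_0(\mathbb T)$ with error $O(N2^{-k}) = o(1)$, then approximate $\chi_m$ by the theorem with error $\exp(-c\log^{0.2}N)$ using a common subspace of dimension $\exp(C\log^{0.9}N)$; the triangle inequality in the $L_0$-metric (which, being a metric, obeys it) yields the claimed bound. The sign issue $\pm m$ is harmless since $\{e(mx):|m|\le N\}$ is already closed under conjugation and the statement allows complex linear dimension. The main obstacle, as noted, is making the discrete-to-continuous error genuinely $o(\exp(-c\log^{0.2}N))$ rather than merely $O(1)$; this is resolved by the cheap trick of oversampling ($k = \log_2 N + O(\log\log N)$), which costs nothing in the final exponents.
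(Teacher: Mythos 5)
Your route is exactly the paper's intended one: the paper proves Corollary~\ref{cor_trig} by discretizing the harmonics to characters of $\mathbb Z_{2^k}$ (``obtain DFT matrix of an appropriate size $2^k\times 2^k$'') and applying the preceding theorem, and your Step~1 tuning ($k\asymp\log N$, $\lambda\asymp\log^{0.1}N$, giving error $\exp(-c\log^{0.2}N)$ and dimension $\exp(O(\log^{0.8}N\log\log N))\le\exp(C\log^{0.9}N)$) is the correct bookkeeping, including checking $\lambda\le c_0k^{1/4}$ and absorbing the factor $k$.

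There is, however, one quantitative slip in Step~2. You correctly insist that the discrete-to-continuous error $2\pi N2^{-k}$ must be at most of order $\exp(-c\log^{0.2}N)$, but then claim this is achieved by oversampling with $k=\log_2N+O(\log\log N)$, i.e.\ $2^k\asymp N\log N$. That choice gives discretization error $\asymp 1/\log N$, which is \emph{much larger} than $\exp(-c\log^{0.2}N)$ (indeed $\exp(-c\log^{0.2}N)\ll\log^{-A}N$ for every fixed $A$), so with your parameters the final bound would only be $O(\log^{-O(1)}N)$, weaker than the corollary. The fix costs nothing: take $k\ge\log_2N+c'\log^{0.2}N$ (or simply $2^k\asymp N^2$), which keeps $k\asymp\log N$ and hence leaves the exponents $0.2$ and $0.9$ untouched; with the piecewise-constant lift of the approximating subspace, $\|\cdot\|_{L_0}\le\|\cdot\|_\infty$ and the triangle inequality for the Ky--Fan metric, the rest of your argument goes through as written.
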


Here we should first discretize harmonics and obtain DFT matrix of an
appropriate size $2^k\times 2^k$.

\subsection{Trigonometric widths in $L_p$, $p<1$}

Recal that the \textit{trigonometric widths} $d_n^T$ correspond to approximation
by subspaces spanned by functions of trigonometric system.

We will consider two cases alongside: the continuous case with harmonics
$e(kx)$ on $\T$ and the discrete case with harmonics $e(kx/N)$ on $\mathbb Z_N$.
We will use the shorthand $L_p^N$ for the space $L_p(\mathbb Z_N)$ with the
usual (quasi)norm $\|f\|_{L_p^N} = (N^{-1}\sum_{x\in\mathbb Z_N}|f(x)|^p)^{1/p}$.

\begin{theorem}
    \label{thm_trig}
    For any $p\in(0,1)$ and for sufficiently large $N$ we have
\begin{equation}
    \label{trigwidth}
    d_n^T(\{e(kx)\}_{k=-N}^N,L_p) \le \log^{-c_1(p)}N,\quad\mbox{if $n\ge
    N\exp(-\log^{c_2}N)$},
\end{equation}
\begin{equation}
    \label{trigwidth_discrete}
    d_n^T(\{e(kx/N)\}_{k\in \mathbb Z_N}, L_p^N) \le \log^{-c_1(p)}N,\quad\mbox{if $n\ge
    N\exp(-\log^{c_2}N)$}.
\end{equation}
\end{theorem}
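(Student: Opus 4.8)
The plan is to reduce both inequalities to a single statement about the full character system of a conveniently factored finite abelian group, and then to build the approximating trigonometric subspace scale by scale, the only non-elementary ingredient being the quoted Ivanov--Yudin estimate. \emph{Passage to a finite group.} First I would discretise, sampling the harmonics at the points $j/M$ with $M$ a little larger than $N$ and highly composite --- say $M=2^m$, or $M=\prod_{i\le t}q_i$ with pairwise coprime factors $q_i$ of comparable size. Every approximant produced below is a trigonometric polynomial of controlled degree, so a Marcinkiewicz--Zygmund-type comparison (valid, with $p$-dependent constants, down into the range $0<p<1$) identifies its $L_p(\T)$ quasi-norm with the $L_p(\mathbb Z_M)$ quasi-norm, and conversely a subspace spanned by $n$ harmonics of $\mathbb Z_M$ lifts to one spanned by $n$ harmonics on $\T$. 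This turns \eqref{trigwidth} into a consequence of \eqref{trigwidth_discrete} for $\mathbb Z_M$ (losing constants in the dimension), and the discrete statement for a general $N$ follows from that for $M$ by the same lifting. So it suffices to estimate $d_n^T$ of the full character system of a factored group $G=\prod_{i\le t}\mathbb Z_{q_i}$.

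\emph{Constructing the approximating set $\Lambda$ from Ivanov--Yudin.} A character $\chi_k$ of $G$ is a product $\prod_i\chi^{(i)}_{k_i}$; writing out a product of approximants and using Fubini together with $|\chi^{(i)}_{k_i}|\equiv1$, the $L_p^p$-error of a product of approximants is dominated by the sum of the $p$-th powers of the coordinate errors (and the same multiplicativity, $\chi_{k+k'}=\chi_k\chi_{k'}$, lets one splice a ``low'' block of frequencies with a ``high'' one, taking care of indices $k$ near the ends of $\{-N,\dots,N\}$). In one factor $\mathbb Z_{q_i}$, approximating $\chi^{(i)}_{k_i}$ by harmonics with indices in a window around $k_i$ becomes, after multiplication by $\chi^{(i)}_{-k_i}$, the problem of approximating the constant $1$ by harmonics indexed in an interval --- exactly the setting of the Ivanov--Yudin estimate, which gives error $\asymp\rho^{1-1/p}$ from a window of length $\rho$. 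Iterating this inside each factor, using also the dilation isometry $\|f(qy)\|_{L_p}=\|f\|_{L_p}$ to move between frequencies related by integer ratios, and then multiplying across the $t$ factors, one assembles a single set $\Lambda$ of harmonics of $G$. With $t\asymp\log^{c_2}N$ and windows of polylogarithmic length, the transition $|G|\to|\Lambda|$ should cost a factor $\exp(\log^{c_2}N)$ in cardinality while the accumulated error stays of order $\log^{-c_1}N$.

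\emph{The main obstacle.} The crux is precisely this bookkeeping, and it is tight. Since for $p<1$ one only has $\|\sum_m c_m f_m\|_p^p\le\sum_m|c_m|^p\|f_m\|_p^p$, each replacement of a harmonic by its Ivanov--Yudin approximant, followed by iteration, multiplies the error (on the $L_p^p$ scale) by $\sum_m|c_m|^p$, which for the extremal approximant is of the order of the window length. One must therefore simultaneously (i) keep the number of scales to $\asymp\log^{c_2}N$, (ii) make each per-scale error polynomially small in $\log N$ --- hence each window fairly long --- and (iii) still end with a genuine saving $\exp(\log^{c_2}N)$ in cardinality rather than a $\Lambda$ of size $\asymp N$; these three requirements barely coexist. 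Making them coexist, and controlling the coefficient growth accurately enough --- e.g. by estimating the $\ell_p$-quasi-norms of the Ivanov--Yudin coefficient vectors directly, or by reorganising the iteration as a convolution of short kernels so that the blow-up is $\|c\|_p^{\,t}$ rather than worse --- is where the work lies; the remainder is convexity, Fubini and the elementary arithmetic of $\mathbb Z_M$.
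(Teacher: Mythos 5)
Your plan correctly identifies the one\hyphen dimensional building block (a polynomial with $\widehat T(0)=1$ and small $L_p$ quasi-norm; your Ivanov--Yudin window is equivalent to the paper's Lemma~\ref{lem_poly}, which just takes a Fej\'er kernel, giving error $\asymp m^{1-1/p}$ for a window of length $m$). But the step you yourself flag as ``where the work lies'' is precisely the missing idea, and as described it does not go through. In the product-group scheme the error bookkeeping is actually not the worst problem (since the factors depend on disjoint coordinates, Fubini makes the $L_p^p$ errors \emph{add} across scales rather than multiply); the real obstruction is the cardinality of the spectrum you use. If each character $\chi_k=\prod_i\chi^{(i)}_{k_i}$ is approximated using a window of frequencies around $k_i$ in each factor, then the union over all $k$ of these translated windows is essentially the whole dual group, so you get no saving $|\Lambda|\le N\exp(-\log^{c_2}N)$ at all; the vague devices you invoke (``splicing'' low and high blocks, dilation isometries) do not supply a mechanism by which the \emph{same} small set of frequencies serves all $2N+1$ characters simultaneously. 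That universal-$\Lambda$ mechanism is the actual content of the theorem.

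The paper resolves it with a different and much simpler device (Lemma~\ref{lem_set}): a \emph{random} set $\Lambda$ of size $\le CN^{1-\frac1{2m}}\log^{1/m}N$ such that every $k$ admits a nonzero step $h=h(k)$ (invertible mod $N$ in the discrete case, prime in $(m,N/m]$ in the continuous case) with the whole punctured progression $k\pm lh$, $l=1,\dots,m$, inside $\Lambda$; the existence is a first-moment computation over $\asymp m^{-3}N/\log N$ disjoint progressions per $k$. Then one approximates $e(kx)$ by $-s(x)$ where $e(kx)T(hx)=e(kx)+s(x)$: the spectrum of $s$ lies in $\Lambda$, and since $|e(kx)|\equiv1$ and $x\mapsto hx$ is measure preserving, the error is exactly $\|T\|_p\le C(p)m^{-\delta}$ --- no iteration over scales and no coefficient blow-up whatsoever; taking $m\asymp\log^{1/2}N$ gives the stated bounds. (Also note the paper treats the continuous case directly rather than via a Marcinkiewicz--Zygmund transference for $p<1$, which your reduction would additionally have to justify.) So as it stands your proposal is a program with its central combinatorial step unproved, not a proof.
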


Consider the spaces of trigonometric polynomials of degree at most $m$:
$$
\mathcal T_m := \left\{\sum_{k=-m}^m c_k e(kx)\right\},\quad
\mathcal T_m^N := \left\{\sum_{k=-m}^m c_k e(kx/N)\right\}.
$$

\begin{lemma}
    \label{lem_poly}
    For any $p\in(0,1)$ there exists $\delta=\delta(p)>0$, such
    that for all $m,N\in\mathbb N$, $N\ge m$ we have
    $$
    \min\{\|T\|_p\colon T\in\mathcal T_m,\;\widehat{T}(0)=1\} \le C(p)m^{-\delta},
    $$
    $$
    \min\{\|T\|_{L_p^N}\colon T\in\mathcal T_m^N,\;\widehat{T}(0)=1\} \le C(p)m^{-\delta}.
    $$
\end{lemma}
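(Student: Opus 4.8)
The cleanest route is to write down one explicit near-extremal polynomial, the Fej\'er kernel, and estimate it directly in both settings. Recall that the Fej\'er kernel of order $m$,
\[
F_m(x)=\sum_{|k|\le m}\Bigl(1-\tfrac{|k|}{m+1}\Bigr)e(kx)=\frac1{m+1}\left(\frac{\sin((m+1)\pi x)}{\sin(\pi x)}\right)^2,
\]
belongs to $\mathcal T_m$, is nonnegative, and satisfies $\widehat{F_m}(0)=1$. For the continuous inequality I would simply take $T=F_m$. (Observe that $\min\{\|T\|_p\colon T\in\mathcal T_m,\ \widehat T(0)=1\}$ equals the distance $\mathrm{dist}_{L_p}\bigl(\mathbf 1,\ \mathrm{span}\{e(kx)\colon 1\le|k|\le m\}\bigr)$, so in the continuous case one may even invoke \cite{IvYu80} with $M=0$ to get the sharp value $\asymp(m+1)^{1-1/p}$; but the Fej\'er computation is self-contained and, more usefully, transfers verbatim to $\mathbb Z_N$.) For the discrete inequality I would put $m':=\min(m,N-1)$ and take $T(x):=F_{m'}(x/N)$ for $x\in\mathbb Z_N$: this lies in $\mathcal T_{m'}^N\subset\mathcal T_m^N$, and since $m'<N$ the quadrature identity $\widehat T(0)=N^{-1}\sum_{x\in\mathbb Z_N}F_{m'}(x/N)=1$ holds exactly, because in the relevant geometric sums only the frequency $0$ contributes. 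Since $N\ge m$ we have $m'\ge m/2$ (the case $m=1$ being trivial, with $T\equiv 1$), so any bound $\|T\|_{L_p^N}\le C(p)(m')^{-\delta}$ already yields the claimed $C(p)m^{-\delta}$. Everything thus reduces to estimating $\|F_m\|_{L_p(\mathbb T)}$ and $\|F_{m'}(\cdot/N)\|_{L_p^N}$.

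Both estimates rest on the elementary pointwise bound, valid for $|t|\le\tfrac12$,
\[
F_m(t)\le\min\!\Bigl(m+1,\ \frac1{4(m+1)t^2}\Bigr),
\]
which follows from $|\sin(\pi t)|\ge 2|t|$. I would split the range into the ``peak'' $|t|\le 1/m$ and the ``tail'' $1/m<|t|\le\tfrac12$. On the peak, $F_m(t)^p\le(m+1)^p$ on a set of measure $\asymp 1/m$ (respectively, on $\asymp N/m$ of the $N$ sample points), which contributes $\ll m^{p-1}$ to $\|F_m\|_p^p$. On the tail, $\int_{1/m}^{1/2}\bigl(4(m+1)t^2\bigr)^{-p}\,dt\asymp m^{-p}\int_{1/m}^{1/2}t^{-2p}\,dt$, and the last integral is $\ll 1$ if $2p<1$, $\ll\log m$ if $2p=1$, and $\ll m^{2p-1}$ if $2p>1$; the discrete tail sum $N^{-1}\sum_{N/m<|x|\le N/2}\bigl(CN^2/(mx^2)\bigr)^p$ is treated by the same case analysis and gives the same orders of magnitude. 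Collecting the contributions, a short computation gives $\|F_m\|_p^p\le C(p)\max\!\bigl(m^{p-1},\,m^{-p}\log m\bigr)$, and the same bound for the discrete norm; hence $\|T\|_p\le C(p)m^{-\delta(p)}$ with, for instance, $\delta(p):=\tfrac12\min\bigl(1,\tfrac1p-1\bigr)>0$, where the factor $\tfrac12$ merely absorbs the logarithm at $p=\tfrac12$. (For $p$ bounded away from $\tfrac12$ one could take $\delta$ larger, matching the exponent $\tfrac1p-1$ of \cite{IvYu80}, but the Lemma only needs $\delta>0$.)

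I do not expect a genuine obstacle here. The ``peak versus tail'' dichotomy for the Fej\'er kernel is classical, and the discrete version differs from the continuous one only in replacing an integral by a Riemann sum over $N\ge m$ equally spaced points, which changes no order of magnitude. The two points that call for a little care are the three-way case split according to the sign of $2p-1$ in the tail estimate, and the bookkeeping in the discrete case: passing to degree $m'=\min(m,N-1)$ so that the identity $\widehat T(0)=1$ holds exactly rather than only approximately, and recording $m'\ge m/2$ so that the loss in the exponent is harmless.
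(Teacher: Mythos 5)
Your proposal is correct and follows essentially the same route as the paper, which simply takes the Fej\'er kernel and applies the pointwise bound $|K_{m-1}(x)|\le\min(m,c/(mx^2))$; your peak/tail splitting, the case analysis in $2p$ versus $1$, and the discrete quadrature remark for $\widehat T(0)=1$ are just the details the paper leaves to the reader.
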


\begin{proof}
    One can take Fejer's kernel and apply the well-known inequality
     $|K_{m-1}(x)|\le \min(m,c/(mx^2))$.
\end{proof}

\begin{lemma}
    \label{lem_set}
    Let $m,N\in\mathbb N$, $N \ge C m^4$.
    \begin{itemize}
        \item There exists a set $\Lambda\subset\mathbb Z$ with the following
            property. For any $k\in\{-N,\ldots,N\}$ there exists $h\in\mathbb Z$,
    $h\ne0$, such that $k\pm lh\in\Lambda$ for $l=1,\ldots,m$.
\item There exists a set $\Lambda\subset\mathbb Z_N$ with the following
    property. For any $k\in\mathbb Z_N$ there exists $h\in\mathbb Z_N^*$, such
            that $k\pm lh\in\Lambda$ for $l=1,\ldots,m$.
    \end{itemize}
    Moreover, in both cases we have
    $$
    |\Lambda|\le C N^{1-\frac{1}{2m}}\log^{\frac1m}N.
    $$
\end{lemma}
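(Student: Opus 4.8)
The plan is to build $\Lambda$ by a short probabilistic argument. Since the two cases differ only in routine bookkeeping, I would first treat $\mathbb Z$ and then indicate the changes for $\mathbb Z_N$. Fix a probability $p\in(0,1]$ to be chosen, and let $\Lambda$ be a random subset of $\{-2N,\dots,2N\}$ obtained by keeping each integer independently with probability $p$. For a centre $k\in\{-N,\dots,N\}$ and a step $h\ne0$ put $C_{k,h}:=\{k\pm lh: l=1,\dots,m\}$; this is a set of exactly $2m$ distinct integers, and if $|h|\le N/m$ it is contained in $\{-2N,\dots,2N\}$, so $\P(C_{k,h}\subseteq\Lambda)=p^{2m}$. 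The centre $k$ is ``served'' once $C_{k,h}\subseteq\Lambda$ for at least one admissible $h$, and I must arrange that, with positive probability, every $k\in\{-N,\dots,N\}$ is served while $|\Lambda|$ stays small.

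The key point is to choose the admissible steps so that the relevant events become independent. Two configurations $C_{k,h},C_{k,h'}$ are disjoint as soon as $h/h'$ is not a ratio of two integers in $\{1,\dots,m\}$; in particular any two distinct primes larger than $m$ give disjoint configurations, hence independent events $\{C_{k,h}\subseteq\Lambda\}$. So I would let $\mathcal H$ be the set of primes in $(m,N/m]$ and $T:=|\mathcal H|$; by the prime number theorem $T\gg N/(m\log N)$ once $N/m$ is large, which is guaranteed by $N\ge Cm^4$. Then for each fixed $k$ the events $\{C_{k,h}\subseteq\Lambda\}_{h\in\mathcal H}$ are mutually independent, so $\P(k\text{ is not served})=(1-p^{2m})^{T}\le e^{-Tp^{2m}}$, and a union bound gives $\P(\text{some }k\text{ is not served})\le(4N+1)\,e^{-Tp^{2m}}$.

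Now I would set $p:=\bigl(2\log(4N+1)/T\bigr)^{1/(2m)}$; this is $\le1$ because $T\ge 2\log(4N+1)$ (again from $N\ge Cm^4$), and it makes the union bound at most $(4N+1)^{-1}<\tfrac12$. Since $\E|\Lambda|=(4N+1)p$, Markov's inequality gives $\P(|\Lambda|>2(4N+1)p)<\tfrac12$, so with positive probability there is a $\Lambda$ that serves every centre and has $|\Lambda|=O(Np)$. Substituting $T\gg N/(m\log N)$ yields $p\ll(m\log^2N/N)^{1/(2m)}$, whence $|\Lambda|\ll m^{1/(2m)}N^{1-1/(2m)}(\log N)^{1/m}\le CN^{1-1/(2m)}\log^{1/m}N$; thus the hypothesis $N\ge Cm^4$ is precisely what makes the prime count $T$ large enough (and forces $N/m\ge 2m$, so $\mathcal H\ne\emptyset$ by Bertrand) while keeping $|\Lambda|$ within the required bound.

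For $\mathbb Z_N$ the construction is the same with $\Lambda$ a random subset of $\mathbb Z_N$, but the steps must be invertible: I would use the primes in $(m,N/m]$ that do not divide $N$, discarding at most $\log_2N$ of them. Two configurations may now meet because $lh+l'h'\equiv0\pmod N$ is possible, but each step conflicts with at most $m^2$ others, so an independent set in the conflict graph still leaves $T'\ge T/(m^2+1)\gg N/(m^3\log N)$ steps with pairwise disjoint configurations; the argument above then goes through with $m$ replaced by $m^3$ in the intermediate estimates, which is still absorbed by $N\ge Cm^4$. I expect this wrap-around bookkeeping in the cyclic case to be the only real obstacle; the heart of the matter is the independence trick, after which everything reduces to a one-line union bound.
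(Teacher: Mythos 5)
Your argument is essentially the paper's own proof: a random set with independent inclusion probability $p$, independence of the events ``$h$ serves $k$'' secured by choosing steps with pairwise disjoint progressions (primes in $(m,N/m]$ in the integer case, a congruence-counting/greedy selection in the cyclic case), then a union bound and the choice $p\asymp(\log N/T)^{1/(2m)}$. The only slip is the count of conflicts per step in $\mathbb Z_N$: the $2m^2$ congruences $l'h'\equiv\pm lh\pmod N$ can have up to $m$ solutions each, so the degree is $O(m^3)$ rather than $m^2$, but this matches the paper's $S\asymp m^{-3}N/\log N$ and only perturbs the final bound by a bounded factor $m^{O(1)/m}$, so the conclusion stands.
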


Let us prove Theorem~\ref{thm_trig}. We will consider only the discrete case;
the continuous case is completely analogous.
\begin{proof}
    Pick some number $m$, apply Lemma~\ref{lem_set} to get a set $\Lambda$
    and Lemma~\ref{lem_poly} to get a polynomial $T\in\mathcal T_m^N$. In order to
    approximate $e(kx/N)$ by the space $\mathcal T(\Lambda)$ of trigonometric
    polynomials with spectrum in $\Lambda$, we find $h\in\mathbb Z_N^*$ such that $k\pm
    h,\ldots,k\pm mh\in\Lambda$. Consider the polynomial
$$
    t(x) = \sum_{l=-m}^m \hat T_m(l)e((k+lh)x/N).
$$
It equals $e(kx/N)+s(x)$, where $\supp\widehat{s}\subset\Lambda$, so we have to
    estimate $\|t\|_{L_p^N}$. Note that $t(x)=e(kx/N)T_m(hx)$,
$$
\|t\|_{L_p^N}= \|T_m(hx)\|_{L_p^N} = \|T_m(x)\|_{L_p^N} \ll m^{-\delta}.
$$
    It remains to take $m\asymp \log^{1/2} N$ and obtain
    $|\Lambda|\le N\exp(-(\log N)^{1/2+o(1)})$.
\end{proof}

Now we will prove Lemma~\ref{lem_set}.
\begin{proof}
We start with the discrete case.
    Let $\Lambda\subset\mathbb Z_N$ be random, each point gets in $\Lambda$ independent of others 
    with some probability $\tau\in(0,1)$.

Let us call a step $h\in\mathbb Z_N^*$ admissible for
a $k\in \mathbb Z_N$ if $k\pm lh\in\Lambda$ for $l=1,\ldots,m$. We have
$$
\P(\mbox{$h$ is admissible for $k$})=\tau^{2m}.
$$
Let us call $k$ ``bad'' if there are no admissible steps for $k$.
Assume that for any $k$ there are at least $S$ steps $h_1,\ldots,h_S\in\mathbb
    Z_N^*$ such that
the progressions $k\pm lh_i$, $l=1,\ldots,m$, do not intersect each other
(soon we will estimate $S$). Then the events that $h_i$ is admissible for $k$ are
independent for different $i$. Hence the probability that $k$ is bad is at
most the probability that none of $h_i$ are admissible, i.e.
$(1-\tau^{2m})^S$.

    The expected number of bad $k$ is at most $N(1-\tau^{2m})^S < N\exp(-S\tau^{2m})$ so with
    probability $\ge2/3$ this number does not exceed $3N\exp(-S\tau^{2m})$.
Moreover, $\E|\Lambda|=N\tau$, so with probability $\ge2/3$ we have
$|\Lambda|\le 3N\tau$. Hence there exists $\Lambda$ such that both estimates
are true. If $\tau$ is small enough to provide that
    \begin{equation}
        \label{S_bound}
        3N\exp(-S\tau^{2m}) < 1
    \end{equation}
then there will be no bad $k$ and the $\Lambda$ is the set we are looking
for.

It remains to estimate $S$ from below. There are totally $|\mathbb Z_N^*|\gg
N/\log N$ possible steps. A step $h_1$ forbids other steps $h$ that are
    solutions of $2m^2$ equations $lh \equiv \pm l_1h_1\pmod{N}$; $l,l_1\in\{1,\ldots,m\}$.
    Each equation has at most $\mathrm{gcd}(N,l)\le m$ solutions, so we can take
$$
S \asymp m^{-3}N/\log N.
$$
Finally, to satisfty~\eqref{S_bound} we take
$$
\tau = (\ln(4N) / S)^{\frac{1}{2m}} \ll (\log^2N/N)^{\frac{1}{2m}}.
$$

The continuous case is analogous. Now $\Lambda$ is a random set in
$\{-2N,\ldots,2N\}$, we consider $k\in\{-N,\ldots,N\}$. To ensure that
    the progressions $\{k\pm lh,\;l=1,\ldots,m\}$ do not intersect we take prime
numbers $h$ in the interval $(m,N/m]$; hence $S \asymp m^{-1}N/\log N$.
\end{proof}

\section{Further work}

In the most interesting case of the $L_1$ norm there is much more to do.

\begin{question}
    Is the trigonometric system $\{e(kx)\}_{1\le k\le N}$ rigid in $L_1$?
\end{question}

It is obvious that $d_n(\{\varphi_1,\ldots,\varphi_N\},L_1)\ge
A^{-1} d_n(B_1^N,\ell_\infty^N)$ for a uniformely bounded ($\|\varphi_k\|_\infty\le
A$) orthonormal system. So good approximation is impossible for $n\le c\log N$.
Can one improve this bound?
\begin{question}
    Prove that $d_n(\{w_1,\ldots,w_N\},L_1)\ge c$ for $n=n(N)$ such that
    $n/\log N\to\infty$.
\end{question}

The independence (or unconditionality) is sufficient for $L_1$-rigidity but this condition is, of
course, very restrictive. Can one get a weaker condition or at least prove the
rigidity of some non-independent systems of interest?

\begin{question}
    Is the Rademacher chaos $\{r_i(t)r_j(t)\}_{1\le i,j\le N}$ rigid in $L_1$?
\end{question}

This question is connected to the next one (see also
Statement~\ref{stm_explicit_rigid}):

\begin{question}
    Give explicit constructions of $\ell_1$-rigid sets in $W\subset \{-1,1\}^N$ of
    polynomial (or at least subexponential) size.
\end{question}

In the paper~\cite{DL19} the non-rigidity of DFT matrices was connected to the
non-rigidity of circulants. In terms of widths we get the following question.
\begin{question}
    Let $u\in[-1,1]^N$ and $T$ is the cyclic coordinate shift operator. Is it
    true that
    $$
    d_{N/2}(\{u,Tu,\ldots,T^{N-1}u\},\ell_1^N)=o(N)\;?
    $$
\end{question}

Let us proceed to the $L_0$ case.

\begin{question}
    Construct a uniformely bounded orthonormal system
    $\varphi_1,\ldots,\varphi_N\in D_N(0,1)$ that is $L_0$-rigid.
\end{question}

\begin{question}
    Is it true that for any finite abelian group $G$ the set of characters admits a
    good approximation by very-low-dimensional spaces:
    $$
    d_n(\widehat{G},L_0(G)) = o(1),\quad n = \exp(\log^{1-c}|G|)\;?
    $$
\end{question}


\begin{thebibliography}{XXXX}

    \bibitem[Kol36]{Kol36}
        A.N.~Kolmogorov,
        ``On the best approximation of functions of a given class'',
        \textit{Selected Works of A.N.~Kolmogorov. Volume 1. Mathematics and
        Mechanics}.
        Translated from: Ueber die beste Ann\"aherung von Funktionen einer
        gegebenen Funktionen-klasse.~--- Ann.Math., 1936, vol.37, p.107--110.

    \bibitem[AW17]{AW17}
        J.~Alman, R.~Williams,
        ``Probabilistic rank and matrix rigidity'',
         \textit{STOC 2017: Proceedings of the 49th Annual ACM SIGACT Symposium on
         Theory of Computing}, June 2017, p.641--652.

    \bibitem[DL19]{DL19}
        Z.~Dvir, A.~Liu,
        ``Fourier and Circulant Matrices Are Not Rigid'',
        \textit{34th Computational Complexity Conference} (2019),
        pp.17:1--17:23.

    \bibitem[L08]{L08}
        S.V.~Lokam,
        ``Complexity Lower Bounds using Linear Algebra'',
        \textit{Foundations and Trends in Theoretical Computer Science},
        \textbf{4}:1-2 (2008), p.1-155.

    \bibitem[K21]{K21}
        B.S.~Kashin,
        ``Lower bounds for m-term approximations in the metric of the discrete space $L_n^0$'',
        \textit{Russian Math. Surveys}, \textbf{76}:5 (2021), 933--935.

    \bibitem[KS]{KS}
        B.S.~Kashin, A.A.~Sahakyan, \textit{Orthogonal Series}. AMS, 2005.

    \bibitem[B89]{B89}
        J.~Bourgain, ``Bounded orthogonal systems and the $\Lambda(p)$-set problem,
        \textit{Acta Math.} \textbf{162} (1989), 227-245.

    \bibitem[V79]{V79}
        J.~Vaaler, ``A geometric inequality with applications to linear forms'', 
        \textit{Pacific J. Math.} \textbf{83}(2) (1979), 543--553.

    \bibitem[MV03]{MV03}
        S.~Mendelson, R.~Vershynin,
        ``Entropy and the combinatorial dimension'',
         \textit{Invent. math.} \textbf{152} (2003), 37--55.

    \bibitem[B]{B}
        I.~Berkes, ``On the uniform theory of lacunary series''.
        \textit{In: Number Theory~--- Diophantine Problems, Uniform Distribution
        and Applications.} Springer, Cham. 2017.

    \bibitem[G87]{G87}
        E.D.~Gluskin,
        ``Peresecheniya kuba s oktaedrom plokho approksimiruyutsya
        podprostranstvami maloi razmernosti'' (in Russian),
        \textit{Priblizhenie funktsii spetsialnymi klassami operatorov, Mezhvuz. sb. nauch. tr., Min. pros. RSFSR,
        Vologodskii gos. ped. in-t}, Vologda, 1987, 35--41.

    \bibitem[VT84]{VT84}
        S.M.~Voronin, N.T.~Temirgaliev,
        On some applications of Banach measure (in Russian),
        \textit{Izv. AN KazSSR, ser. fiz.-mat.}, 1984.

    \bibitem[I68]{I68}
        R.S.~Ismagilov, ``On $n$-dimensional diameters of compacts in a Hilbert space'',
        \textit{Funct. Anal. Appl.,} \textbf{2}:2 (1968), 125--132.

    \bibitem[M94]{M94}
         V.E.~Maiorov, ``Kolmogorov's $(n,\delta)$-widths of spaces of smooth
         functions'', Russian Acad. Sci. Sb. Math., 79:2 (1994), 265–279.

    \bibitem[Cr02]{Cr02}
        J.~Creutzig,
        ``Relations between Classical, Average, and Probabilistic Kolmogorov Widths'',
        \textit{Journal of Complexity}, \textbf{18} (2002), 287--303.

     \bibitem[Gap66]{Gap66}
         V.F.~Gaposhkin,
         ``Lacunary series and independent functions'',
         \textit{Russian Mathematical Surveys}, \textbf{21}:6 (1966), 1--82.

    \bibitem[IvYu80]{IvYu80}
        V.I.~Ivanov, V.A.~Yudin,
        ``Trigonometric system in $L_p$, $0<p<1$,
         \textit{Mathematical Notes of the Academy of Sciences of the USSR},
         \textbf{28} (1980), 884--889.

    \bibitem[KMR18]{KMR18}
        B.S.~Kashin, Yu.V.~Malykhin, K.S.~Ryutin,
        ``Kolmogorov Width and Approximate Rank'',
        \textit{Proc. Steklov Inst. Math.}, \textbf{303} (2018), 140--153.

    \bibitem[VCT]{VCT}
        Vahaniya N.N., Tarieladze V.I., Chobanyan S.A.,
        \textit{Probability distributions in Banach spaces} (in Russian),
        1985.

    \bibitem[DTU18]{DTU18}
        D.~Dung, V.~Temlyakov, T.~Ullrich,
        \textit{Hyperbolic Cross Approximation}.
        Birkh\"auser, Cham. 2018.

    \bibitem[LGM96]{LGM96}
        G.G.~Lorentz, M.~Golitschek, Y.~Makovoz,
        \textit{Constructive Approximation: Advanced Problems},
        1996.

    \bibitem[P85]{P85}
        A.~Pinkus,
        \textit{n-Widths in Approximation Theory},
        1985.

    \bibitem[OO13]{OO13}
        T.~Oikhberg, M.I.~Ostrovskii,
        ``Dependence of Kolmogorov Widths on the Ambient Space'', 
        \textit{Zh. Mat. Fiz. Anal. Geom.}, \textbf{9}:1 (2013), 25--50.

    \bibitem[NWBook]{NWBook}
        E.~Novak, H.~Wozniakowski,
        \textit{Tractability of Multivariate Problems}. Vol 1. 

    \bibitem[TaoWuBook]{TaoWuBook}
        T.~Tao, V.H.~Vu,
        \textit{Additive combinatorics}. Cambridge University Press, 2010.

    \bibitem[AFR85]{AFR85}
        N. Alon, P. Frankl, V. R\"odl,
        ``Geometrical realization of set systems and probabilistic communication
        complexity'',
        \textit{Proc. 26th Ann. Symposium on Foundations of Computer Science}
        (1985).

    \bibitem[AMY16]{AMY16}
        N.~Alon, S.~Moran, A.~Yehudayoff,
        ``Sign rank versus VC dimension'',
        \textit{29th Annual Conference on Learning Theory}, PMLR \textbf{49}
        (2016), 47--80.

    \bibitem[HDPBook]{HDPBook}
        R.~Vershynin,
        \textit{High-Dimensional Probability. An Introduction with Applications
        in Data Science}. Cambridge University Press, 2018.

    \bibitem[K75]{K75}
        B.S.~Kashin,
         ``The diameters of octahedra'' (in Russian),
         \textit{Uspekhi Mat. Nauk}, \textbf{30}:4 (1975),
         251--252.


    \bibitem[Mal22]{Mal22}
        Y.~Malykhin,
        ``Matrix and tensor rigidity and $L_p$-approximation'',
        \textit{J. Complexity}, \textbf{72} (2022), 101651, 13 pp. 

    \bibitem[Haj86]{Haj86}
        D.J.~Hajela,
        ``Construction techniques for some thin sets in
        duals of compact abelian groups'',
        \textit{Annales de l’institut Fourier}, \textbf{36}, N.3 (1986), p.
        137--166.

    \bibitem[Tikh87]{Tikh87}
        V.M.~Tikhomirov, ``Approximation theory'' (in Russian),
        \textit{Itogi Nauki i Tekhniki. Ser. Sovrem. Probl. Mat. Fund. Napr.,},
        \textbf{14}, VINITI, Moscow, 1987, 103–260

    \bibitem[Ast]{Ast}
        S.V.~Astashkin,
        \textit{The Rademacher System in Function Spaces},
        Birkh\"auser, 2020.

    \bibitem[BR]{BR}
        C.~Bennett, K.~Rudnick,
        \textit{On Lorentz--Zygmund spaces},
         Warszawa, 1980.

\end{thebibliography}
\end{document}